\newtheorem{theorem}{Theorem}
\newtheorem{lemma}{Lemma}[section]
\newtheorem{corollary}[lemma]{Corollary}
\newtheorem{proposition}{Proposition}
\numberwithin{equation}{section}
\begin{document}

\title{Points with finite orbits for trace maps}

\author{Stephen  Humphries}
\address{Department of Mathematics,
 Brigham Young University, 
Provo, UT 84602, U.S.A.} 
\email{steve@mathematics.byu.edu}

\subjclass[2000]{Primary 37C25; Secondary 15A30, 37D40, 37C85,  20F36}
\date{\today}

\begin{abstract}
We study an action of ${\rm Aut}(F_n)$ on $\mathbb{R}^{2^n-1}$ by trace
maps, defined using the traces of $n$-tuples of 
matrices in $\mathrm{SL}(2,\mathbb{C})$ having real traces.  
  We determine the finite
orbits for this action. These orbits essentially come from (i)  the finite subgroups of $\mathrm{SL}(2,\mathbb C)$, and (ii) a dense set of (rational) points in an embedded quotient of an $n$-torus. 
\end{abstract}

\maketitle

\setcounter{section}{-1}
\section{Introduction}

Many authors have studied trace maps \cite{BGJ,C,D,hm1,hm2,LPW,P,PWW,RB1,R}, 
which give an action of
${\rm Aut}(F_2)$ on $\mathbb{R}^3$ (here $F_n$ is a rank $n$ free group) 
and form an interaction between representation
theory and dynamical systems.  
Goldman and others have also studied this action in the context
of character varieties, see \cite{Go1,Go2,Go3,Br1,Br2,Tan}.  The case $n=2$ is that usually studied.
Here there are elements  $ \sigma_1,\sigma_2\in {\rm Aut}(F_2)$ 
acting as follows on $\mathbb R^3$: 
\begin{equation}\label{eq0.1}
\sigma_1(x,y,z)=(z,y,2yz-x),\;
 \sigma_2(x,y,z)=(x,2xy-z,y). 
\end{equation}
This comes from an action of ${\rm Aut}(F_2)$ on triples $(x_1,x_2,x_{12})$  of traces
corresponding to  pairs of matrices  $  A_1, A_2 \in 
\mathrm{SL}(2,\mathbb{C})$, where
\begin{equation}\label{eq0.2}
x_1= \mathrm{trace}(A_1)/2,\; x_2=\mathrm{trace}(A_2)/2,\; x_{12}=\mathrm{trace}(A_1A_2)/2,
\end{equation}
are real numbers; see \S \ref{sec1} for more details.

The ${\rm Aut}(F_2)$ action preserves the level surfaces $E_t =
E^{-1}(t)$ of the function
\begin{equation}\label{eq0.3}
E:\mathbb{R}^3 \rightarrow \mathbb{R},\,\,\,
E(x,y,z)= x^2 +y^2 +z^2 -2xyz.
\end{equation}
A point of $E_t$ will be
said to  {\it lie on level $t$}.  Now $E_1$ contains (within the
cube $[-1,1]^3$) a curvilinear tetrahedron that is parameterised as:
\begin{equation}\label{eq0.4}
\partial \mathcal{T} = \{ (\cos 2\pi\theta_1, \cos 2\pi\theta_2,
\cos 2\pi(\theta_1 + \theta_2)): (\theta_1, \theta_2)^T \in
\mathbb{R}^2 \},
\end{equation}
where the action of ${\rm Aut}(F_2)$ is given via the corresponding 
$\mathrm{GL}(2,\mathbb{Z})$-action on $(\theta_1, \theta_2)^T$.

Thus, for $m \in \mathbb{N}$, any $\sigma \in {\rm Aut}(F_2)$ permutes the
points of $\partial \mathcal{T}$ corresponding to $ \{(p/m,q/m)^T:
0\leq p,q <m \}$, which therefore have finite ${\rm Aut}(F_2)$-orbits.

One can generalize the above to the situation where, for $n \ge 2$, the automorphism group ${\rm Aut}(F_n)$ acts on a trace variety corresponding to $n$-tuples of matrices $A_1,A_2,\dots,A_n \in \mathrm{SL}(2,\mathbb C)$ with real traces. In this case the trace variety  is generated by   the $2^n-1$ half-traces
\begin{align*}
& x_I={\rm trace} (A_{i_1}A_{i_2}\dots A_{i_k})/2,\,\, I=(i_1,i_2,\dots,i_k), \\&\qquad  1\le k \le n, 1\le  i_1<i_2<\dots<i_k\le n.\end{align*}
Here `generated' means 
 that for any $A \in \langle A_1,A_2,\dots,A_n\rangle$, the half-trace of $A$ is an integer polynomial in these $x_I$. We note that these $2^n-1$  traces are certainly not a minimal generating set (see \cite {Mag80,Fl} for example), however they are natural in our situation.

Thus we obtain an action of  ${\rm Aut}(F_n)$ on $\mathbb R^{2^n-1}$.

The aim of this paper is to study the set $\mathcal F_n, n=2,3, $ of points of
$\mathbb R^{2^n-1}$ which have finite orbit under the action of
 ${\rm Aut}(F_n)$ and the set $\mathcal P_n$ of points which are
periodic for each element of  ${\rm Aut}(F_n)$.  It is clear that
$\mathcal F_n \subseteq \mathcal P_n$ and the main result of this
paper is to prove  that $\mathcal F_n = \mathcal P_n$ and to completely determine $\mathcal F_n$.

 The  result in the case $n=2$ follows from results of \cite {Dub}, where the authors use this result  to study 
 the global analytic properties of the solutions of a particular
family of Painlev\'e VI equations. 
  Here we note that 
  for any $(x,y,z)^T \in \mathbb R^3\setminus E_1$ there is a pair $(A_1,A_2) \in \mathrm{SL}(2,\mathbb Z)$ (given explicitly in $\S 1$), determined up to conjugacy, and which we say is {\it associated to $(x,y,z)^T$}. The result of 
 \cite {Dub} is:

 \begin{theorem}\label{th1}
Consider the action of  ${\rm Aut}(F_2)$ on $\mathbb{R}^3$.  
Let $\mathcal F=\mathcal F_2$ denote the points that have finite
 ${\rm Aut}(F_2)$-orbit and 
$\mathcal P=\mathcal P_2$ those that have finite orbit under each element of
 ${\rm Aut}(F_2)$.   
Then any point $p \in \mathcal P$ with associated matrices
$(A_1,A_2)$
 satisfies  one of the
following three conditions: 
\begin{enumerate}
 \item  \label{itemi}
the group $\langle A_1,A_2
\rangle$ is a finite group;
\item\label{itemii}
$p$ lies on one of the
coordinate axes of $\mathbb R^3$;
\item\label{itemiii}
the pair $(A_1,A_2)$ is conjugate
to $(A_1',A_2')$ where $A_1',A_2'$ are lower triangular and where the
diagonal elements of $A_1'$ and $A_2'$ are roots of unity.  This
includes the case where some word in $\langle A_1,A_2\rangle$ is
parabolic (so that the group $\langle A_1,A_2\rangle$ is not finite).
\end{enumerate}

Moreover $\mathcal P=\mathcal F$.
 \end{theorem}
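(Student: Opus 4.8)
The plan is to reduce everything to a statement about the $\mathrm{GL}(2,\mathbb{Z})$-action on the parametrising variables, using the dynamics on the level sets $E_t$. First I would observe that the equality $\mathcal{P} = \mathcal{F}$ is the easy part: if $p$ is periodic under each generator $\sigma_1,\sigma_2$ and under the finitely many other generators of $\mathrm{Aut}(F_2)$ (equivalently, under a finite generating set), then the orbit of $p$ under the subgroup generated by those elements is contained in a finite union of orbits of cyclic groups; the genuine content is that \emph{finite orbit under each element} forces \emph{finite orbit under the whole group}, which I would get from the structure of $\mathrm{Aut}(F_2)$ acting on $\mathbb{R}^3$ — concretely, $\mathrm{Out}(F_2)\cong\mathrm{GL}(2,\mathbb{Z})$, which is virtually free, so a subgroup in which every element has finite order (a torsion subgroup) is finite, and one then checks the kernel of the action contributes nothing new. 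So the crux is to determine $\mathcal{P}$.

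Next I would split along the value of $E$. For $p = (x,y,z)^T$ with $E(p) = t$: the associated pair $(A_1,A_2)\in\mathrm{SL}(2,\mathbb{C})$ has $\mathrm{trace}(A_1A_2A_1^{-1}A_2^{-1}) = 2(2t-1)$ (the standard Fricke identity), and the $\mathrm{Aut}(F_2)$-action on traces is realised by actual automorphisms of the group $\langle A_1,A_2\rangle$. So a point has finite orbit iff the \emph{conjugacy class} of the pair $(A_1,A_2)$ has finite orbit under $\mathrm{Aut}(F_2)$. If $\langle A_1,A_2\rangle$ is finite we are in case (\ref{itemi}); this accounts for finitely many values of $t$. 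If the group is infinite, I would argue that finiteness of the orbit forces strong rigidity. The key dichotomy: either the commutator $[A_1,A_2]$ is parabolic or $\pm I$ (i.e.\ $t \in \{0,1\}$ after normalisation), or it is not. When $t=1$ one lands on $E_1$, where \eqref{eq0.4} shows $\partial\mathcal{T}$ is a $\mathrm{GL}(2,\mathbb{Z})$-equivariant image of the torus $\mathbb{R}^2/\mathbb{Z}^2$; a point has finite orbit iff its $(\theta_1,\theta_2)$ are rational, and one checks these are exactly the triangular pairs with root-of-unity diagonal (case (\ref{itemiii})), together with degenerate loci giving the coordinate axes (case (\ref{itemii})). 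When $[A_1,A_2]$ is parabolic but the group infinite, a conjugate makes $A_1,A_2$ upper (or lower) triangular simultaneously, and finiteness of the trace orbit forces the diagonal entries to be roots of unity — again case (\ref{itemiii}).

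The remaining and main case is $\langle A_1,A_2\rangle$ infinite with $[A_1,A_2]$ neither $\pm I$ nor parabolic, i.e.\ $t\notin\{0,1\}$ and the group not virtually abelian. Here I would show the orbit is infinite, hence these $p$ never occur in $\mathcal{P}$. The idea is that for such $t$ the level surface $E_t$ carries a genuine hyperbolic-type dynamical system: the generators $\sigma_1,\sigma_2$ act on $E_t$ with positive topological entropy / no nonconstant invariant algebraic structure, so a finite orbit is impossible unless the point sits on an exceptional invariant set, and for $t\notin\{0,1\}$ one shows (by the explicit formulas \eqref{eq0.1} and an eigenvalue/growth estimate for iterates of a hyperbolic word such as $\sigma_1\sigma_2$) that the only bounded orbits are fixed points of small isotropy, none of which is periodic for all of $\mathrm{Aut}(F_2)$. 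I expect \textbf{this last step — ruling out finite orbits off $E_1$ and off the triangular locus — to be the main obstacle}, since it requires a real dynamical argument (unbounded growth of matrix entries, or an explicit invariant function argument on $E_t$) rather than the essentially algebraic/number-theoretic bookkeeping that handles the other cases; one must in particular handle with care the finitely many sporadic $t$ coming from finite subgroups of $\mathrm{SL}(2,\mathbb{C})$ so that case (\ref{itemi}) is stated cleanly and is not accidentally subsumed or missed.
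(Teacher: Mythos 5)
Two parts of your plan do not hold up. First, your claim that $\mathcal P=\mathcal F$ is ``the easy part'' rests on a non sequitur: knowing that each individual $\sigma\in\mathrm{Aut}(F_2)$ has a finite orbit \emph{through the single point $p$} does not make $\sigma$ act with finite order anywhere, so no torsion subgroup of $\mathrm{GL}(2,\mathbb Z)$ appears and virtual freeness is not usable; likewise the full group orbit is not contained in a finite union of cyclic orbits of generators (orbits of long words need not lie in those cyclic orbits). In the paper the equality $\mathcal P=\mathcal F$ is obtained only \emph{after} the classification, by observing directly that every point of each of the three types (including the five exceptional orbits $O_1,\dots,O_5$) has finite orbit under the whole group.

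Second, the step you yourself flag as the main obstacle is handled in the paper by an argument of a completely different nature from the one you sketch, and your sketch would fail. For $p\in\mathcal P$ off the axes, Lemma \ref{l2.2} forces every coordinate of every point in the orbit to be of the form $\cos(2\pi q)$, $q\in\mathbb Q$, hence to lie in $[-1,1]$; combined with the Algebraic Lemma \ref{lemvinberg} this disposes of $E(p)>1$, and on $E_1$ one needs Lemma \ref{lemImPi2} together with Lemma \ref{l2.1}(ii) to eliminate the four cone components of $E_1\setminus\partial\mathcal T$, which your torus parametrisation does not see. The remaining, and crucial, case $E(p)<1$ takes place inside the compact invariant tetrahedron $\mathcal T$, where \emph{every} orbit is bounded; hence ``unbounded growth of matrix entries'' or eigenvalue estimates for a hyperbolic word such as $\sigma_1\sigma_2$ cannot rule anything out, and positive entropy is perfectly compatible with an abundance of finite orbits --- indeed the five genuine orbits $O_1,\dots,O_5$ live exactly in this region, so your assertion that the only bounded orbits are fixed points of small isotropy is false there. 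What is actually needed (the argument of \cite{Dub}, reviewed in \S 6 of the paper) is arithmetic rather than dynamical: the coordinates are of the form $\cos(\pi p/q)$, the Gram matrix $g$ is positive definite on the interior of $\mathcal T$ and defined over $\mathbb Q[\cos(\pi/q)]$, the three reflections preserving $g$, after summing over Galois conjugates, generate a crystallographic subgroup of a compact orthogonal group, hence a finite reflection group, and Coxeter's classification then leaves only finitely many possibilities, each with $\langle A_1,A_2\rangle$ finite. Nothing in your proposal substitutes for this step, so as written the classification of $\mathcal P$ in the interior of $\mathcal T$ --- the heart of the theorem --- is missing.
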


It is easy to see that $p \in \mathcal F$ in each of the above three
cases.

For (\ref{itemi}) in the above Theorem  we recall the well known fact that any finite
subgroup of $\mathrm{SL}(2,\mathbb C)$ is cyclic, binary dihedral or a subgroup of one of:

\noindent {\it The binary tetrahedral group} 
$$\mathrm{BT}_{24}=\bigg\langle \begin{pmatrix} i&0\\0&-i\end {pmatrix},
\begin{pmatrix} 0&1\\-1&0\end {pmatrix},\frac 1 2 
\begin{pmatrix} 1+i&i-1\\1+i&1-i\end {pmatrix}\bigg\rangle.$$

Note that $BT_{24} \cong \mathrm{SL}(2,3)$.

\noindent {\it The binary octahedral group} 
$$\mathrm{BO}_{48}=\bigg\langle \begin{pmatrix} i&0\\0&-i\end {pmatrix},
\begin{pmatrix} 0&1\\-1&0\end {pmatrix},\frac 1 2 
\begin{pmatrix} 1+i&i-1\\1+i&1-i\end {pmatrix},  
\frac 1 {\sqrt{2}} \begin{pmatrix} 1+i&0\\0&1-i\end {pmatrix}\bigg\rangle.$$

\noindent {\it The binary icosahedral group} 
$$\mathrm{BI}_{120}=\bigg \langle 
\begin{pmatrix} \varepsilon^3&0\\0&\varepsilon\end {pmatrix},
\begin{pmatrix} 0&1\\-1&0\end {pmatrix},
\frac 1 {\sqrt{5}} \begin{pmatrix}  \varepsilon^4- \varepsilon& \varepsilon^2- \varepsilon^3\\ \varepsilon^2- \varepsilon^3& \varepsilon- \varepsilon^4\end {pmatrix}\bigg \rangle,$$ 
where $ \varepsilon$ is a primitive fifth root of unity.

\medskip 

From the classification of Theorem \ref {th1}  it follows that if $p \notin E_1$ and $p$
does not lie on an axis, then the associated group $\langle A_1,A_2\rangle$ is finite, and $p$ is in one of  $\mathrm{Aut}(F_2)$-orbits  
$O_1,\dots,O_5$ of sizes $40, 36, 72, 16, 40$ (respectively).

The fact that $\mathrm{Inn}(F_2)$ acts trivially on the triples of  traces $(x_1,x_2,x_{12})$ means that we are really considering an action of $\mathrm{Out}(F_2)=\mathrm{Aut}(F_2)/\mathrm{Inn}(F_2)$.

We should note that in \cite {Dub} the authors consider the action of the braid group $B_3$ (instead of $\mathrm{Aut}(F_2)$) on such triples.
We further note however that this action of $B_3$ is determined  by the elements $\sigma_1,\sigma_2 \in \mathrm{Aut}(F_2)$ (defined in the next section). It is a fact that the subgroup $\langle \sigma_1,\sigma_2\rangle \cong \mathrm{PSL}(2,\mathbb Z)\cong B_3/Z(B_3)$ has finite index ($8$) in  $\mathrm{Out}(F_2)$. Thus the problem of determining points of $\mathbb R^3$  with finite orbit for the action of  $\mathrm{Aut}(F_2)$ is the same as the problem of finding points with finite orbit for the action of the subgroup $\langle \sigma_1,\sigma_2\rangle$.

  We also note that the authors of \cite {Dub}  are only interested in the triples $(x_1,x_2,x_{12}) \in \mathbb R^3$ (with finite orbit) up to an equivalence where two such are equivalent if they differ by changing the sign of two of the entries. Thus they obtain orbits of sizes $10,9,18,4,10$ (respectively). 

We also note that in \cite {Dub} they are only concerned with points having finite orbits that are in the interior of $\mathcal T$. Results given in $\S 3$, together with what is proved in \cite {Dub}, easily give Theorem \ref {th1}. 
The last thing to note about \cite {Dub} is that they use the traces of the products $A_{i_1}A_{i_2}\dots A_{i_k}$, not the half-traces.

\medskip

In the next few sections we introduce the preliminary results for the $n=2,3$ cases, finally being able to state our main result for the case $n=3$ at the end of section $2$. This result is proved in $\S\S 3-7$. The result for general $n$ is proved  in $\S 8$.

\medskip
\noindent {\bf Acknowledgements} We would like to thank Anthony Manning for his careful reading  and comments on this material. All computations made in the preparation of this manuscript were accomplished using Magma \cite {Ma}.

\section{Preliminaries for $n=2$}\label{sec1}




 Let $F_2=\langle a_1,a_2 \rangle$ be a free
group of rank $2$ and let $ {\sigma}_i \in \mathrm{Aut}(F_2), i=1,2,$
be defined by
\begin{eqnarray*}
 {\sigma}_1(a_1)&=&a_1a_2,\;\;  {\sigma}_1(a_2)=a_2;\\
 {\sigma}_2(a_1)&=&a_1,\;\;\;\;
 {\sigma}_2(a_2)=a_1^{-1}a_2. 
\end{eqnarray*}
One
can show that $ {\sigma}_1, {\sigma}_2$ satisfy the
braid relation 
$ {\sigma}_1 {\sigma}_2 {\sigma}_1=
 {\sigma}_2 {\sigma}_1 {\sigma}_2$ 
and   that 
$( {\sigma}_1 {\sigma}_2)^3$ acts as an inner automorphism, so that the action of $ ( {\sigma}_1 {\sigma}_2)^3$ on the trace triples  is trivial.
Under the natural homomorphism 
$$\Phi=\Phi_2:\mathrm{Aut}(F_2)\rightarrow \mathrm{GL}(2,\mathbb{Z}),$$ 
we have 
$$\Phi( {\sigma}_1)  = \begin{pmatrix} 1&1\\0
  &1\end{pmatrix}, \quad 
\Phi( {\sigma}_2)   = \begin{pmatrix}
  1&0\\-1&1\end{pmatrix}.$$ 
  
Note
  \cite[Th 3.9]{MKS}
that any element of $\mathrm{Aut}(F_2)$ fixes the commutator
$a_1a_2a_1^{-1}a_2^{-1}$ up to conjugacy and inversion.

 Now suppose
that the $a_i,i=1,2,$ are represented by elements  $A_1,A_2 \in \mathrm{SL}(2,\mathbb{
C})$.  Define $x_1,x_2,x_{12}$ as in (\ref{eq0.2}).  Recall the  standard trace
identities for such $2 \times 2$ matrices:
\begin{eqnarray*}
\mathrm{trace}(A_1^{-1})&=&\mathrm{trace}(A_1),\hspace{20mm}
\mathrm{trace}(I_2)=2, \\
\mathrm{trace}(A_1A_2)&=&\mathrm{trace}(A_2A_1)=\mathrm{trace}(A_1)\mathrm{trace}(A_2)
-\mathrm{trace}(A_1A_2^{-1}).  
\end{eqnarray*}
Using these we obtain the  induced action of $\sigma_1, \sigma_2$
on $\mathbb{R}^3$
 given by (\ref{eq0.1}).

We will write this action of $\alpha \in {\rm Aut}(F_2)$ on
$(x,y,z)^T \in \mathbb{ R}^3$ on the right: $(x,y,z)^T\alpha$; this
action is also the corresponding action by Nielsen transformations
\cite{MKS,RB2}.  In \cite{hm1,hm2} we studied  the family  of trace maps
$\sigma_1^n\sigma_2^n$ and determined all of their curves of fixed
points and some of their period $2$ curves.

One can check that for all $A_1,A_2 \in \mathrm{SL}(2,\mathbb C)$ we have $\mathrm{trace}(A_1A_2A_1^{-1}A_2^{-1})/2=2E(x_1,x_2,x_{12})-1$.
Thus, from the above trace identities and the fact that any element of $\mathrm{Aut}(F_2)$ fixes the
commutator $a_1a_2a_1^{-1}a_2^{-1}$ up to conjugacy and inversion, it
follows that the action of  $\mathrm{Aut}(F_2)$ fixes the function
$E=E(x_1,x_2,x_{12})$ of (\ref{eq0.3}). 
 Thus each level set $E_t$ is invariant under the
action.  The level set $E_1$ is distinguished and has been drawn by
many authors \cite{Go1,RB1}.  There are four points,
$$V=\{(1,1,1)^T,(-1,-1,1)^T,(-1,1,-1)^T,(1,-1,-1)^T\},$$ 
in $E_1$  which
are the only singular points of $E_1$.  In fact they are the only
singular points of any $E_t$.  Further, the six line segments 
joining these points are contained in $E_1$ and there is a unique
component of $E_1 \setminus V$ whose closure is compact.  In fact
this closure is a topological $2$-sphere that separates $\mathbb{ R}^3$
into two components, the closure of one of these components is a
$3$-ball $\mathcal T$ that we call a ``curvilinear tetrahedron", for
whose boundary we gave a parametrisation in (\ref{eq0.4}).  One can check
that $\mathcal T \subseteq [-1,1]^3$ and that $\mathcal T \cap
\partial [-1,1]^3$ is the above mentioned set of six line segments.
Further the closures of the other (four) components of $E_1 \setminus V$ are determined by the point of $V$ which they contain; we will call these components {\it cones}.

Let $\mathbb{ T}^2=\mathbb{ R}^2/\mathbb{ Z}^2$ denote the $2$-torus.  Then we have  the
map
$$\Pi_2:\mathbb{ T}^2 \to \partial \mathcal T,\quad (\theta_1,\theta_2)^T \mapsto
(\cos(2\pi \theta_1),\cos(2\pi
\theta_2),\cos(2\pi(\theta_1+\theta_2)))^T.$$ Note that
$\Pi_2(\theta_1,\theta_2)^T=\Pi_2(-(\theta_1,\theta_2)^T)$.  Then the
map $\Pi_2$ is a branched double cover, branched over the four
points of $V.$

The action of  $\mathrm{Aut}(F_2)$ on $\partial \mathcal T$ actually
comes from the action of $\mathrm{GL}(2,\mathbb{Z})$ on $\mathbb{ T}^2$, the
action being determined by the 
homomorphism $\Phi_2$. 
 For $\alpha \in \mathrm{Aut}(F_2)$ and 
$\theta=\begin{pmatrix} 
\theta_1\\ \theta_2\end{pmatrix} \in  \mathbb{T}^2$ the maps $\Pi_2, \Phi_2$
are related as follows (see \cite [p. 1170] {hm1}):
\begin{equation}\label{eq1.1}
(\Pi_2\theta)\alpha=\Pi_2(\Phi_2(\alpha)(\theta)).  
\end{equation}

The set $\mathcal F=\mathcal F_2$ includes the points $V$.  If we ignore the points
of $V$ for the moment, then, as pointed out in \cite[p. 839]{RB1}, a
consequence of the implicit function theorem is that, for any point
$p \in \mathcal P \cap
\partial \mathcal T$ and  $\sigma\in  \mathrm{Aut}(F_2)$ with
$\Phi_2(\sigma)$ hyperbolic, there is a curve of
fixed points of $\sigma^N$ through $p$ for some $N=N(p,\sigma)$; see
also \cite[\S5]{Br1}.

To every point $(x_1,x_2,x_{12})^T \in \mathbb{ R}^3$ there is a pair $(A_1,A_2) \in
\mathrm{SL}(2,\mathbb{ C})^2$ such that trace$(A_1)=X=2x_1,$ trace$(A_2)=Y=2x_2$ and
trace$(A_1A_2)=Z=2x_{12}$.  The pair $(A_1,A_2)$ is determined up to conjugacy
by the point $(x_1,x_2,x_{12})^T\notin E_1$ and one possible choice for $A_1, A_2$ is
\begin{equation*}
A_1=\begin{pmatrix} \frac  {X+\sqrt{X^2-4}} {2}&0\\0&\frac {2}
{X+\sqrt{X^2-4}}\end{pmatrix},
\end{equation*}
\begin{equation}\label{eq1.2}
A_2=
\begin{pmatrix} \frac {XZ+Z\sqrt{X^2-4}-2Y} {X^2-4+X\sqrt{X^2-4}}&\frac
{X^2+Y^2+Z^2-XYZ-4}{4-X^2}\\
1&\frac
{-XZ-Z\sqrt{X^2-4}-2Y+X^2Y+XY\sqrt{X^2-4}}{X^2-4+X\sqrt{X^2-4}}\end{pmatrix}.  
\end{equation}
We note that  $(x_1,x_2,x_{12}) \in E_1$ if and
only if the $(1,2)$ entry of the above $A_2$ is zero, so that $A_1,A_2$
are both lower triangular matrices if $(x_1,x_2,x_{12}) \in E_1$.

It follows from the proof of Theorem \ref{th1} in \cite {Dub} that there are only
five finite  $\mathrm{Aut}(F_2)$ orbits which do not lie in $E_1$ or on the
axes.  We denote them by $O_1,\dots,O_5$ and they contain (respectively)
$40,36,72,16,40$ points.  They lie on levels 
$$0.904508\ldots,\;\;
3/4,\;\; 3/4,\;\; 1/2,\;\; 0.3454915\ldots$$ 
and each has
associated group $\langle A_1,A_2\rangle$ a finite group of order
$120,48,120,24,120$ (respectively), the group of order $120$ being
$BI_{120}$.  (See \cite[Th 29.6]{Hup} for the possible
finite subgroups of $SU(2,\mathbb{C})$.)  Thus there are four levels
that contain the five sets $O_i, i=1,\dots ,5$.

In order to give the reader an idea of what the points of each $O_i$  look like
we draw the following diagrams.  For every point $p=(x,y,z)^T \in
O_i$, we take the 
simple closed curves containing $p$ which are parallel to the
various coordinate planes and which lie on the same level as $p$;
there will be three such curves through every $p\in O_i$.  We
then radially project these curves onto the unit sphere and then
stereographically project  onto the $xy$-plane.  These curves are
 drawn in Figures $1,2$.  The points of $O_i$ in the Figures  are exactly
those points which are triple points for these curves.  We further
note that each such simple closed curve contains many of the
points of $O_i$, all such points in a single curve being in one or
more orbits of a conjugate of $\sigma_1$.  


\begin{figure}
     \centering
     \subfigure[$O_1$]{
          \includegraphics[width=.45\textwidth]{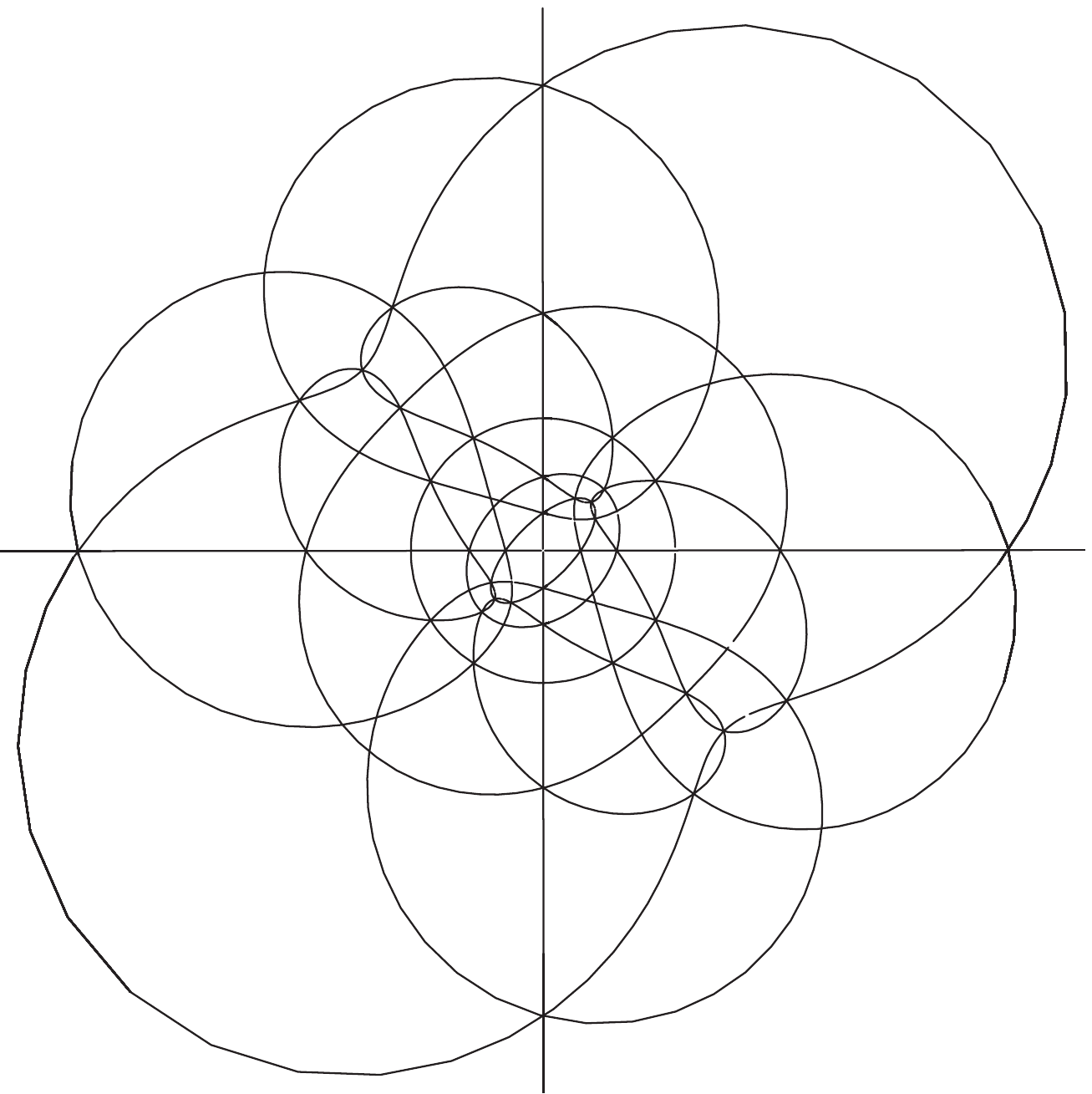}}
     \hspace{.3in}
     \subfigure[$O_2$]{
          \includegraphics[width=.45\textwidth]{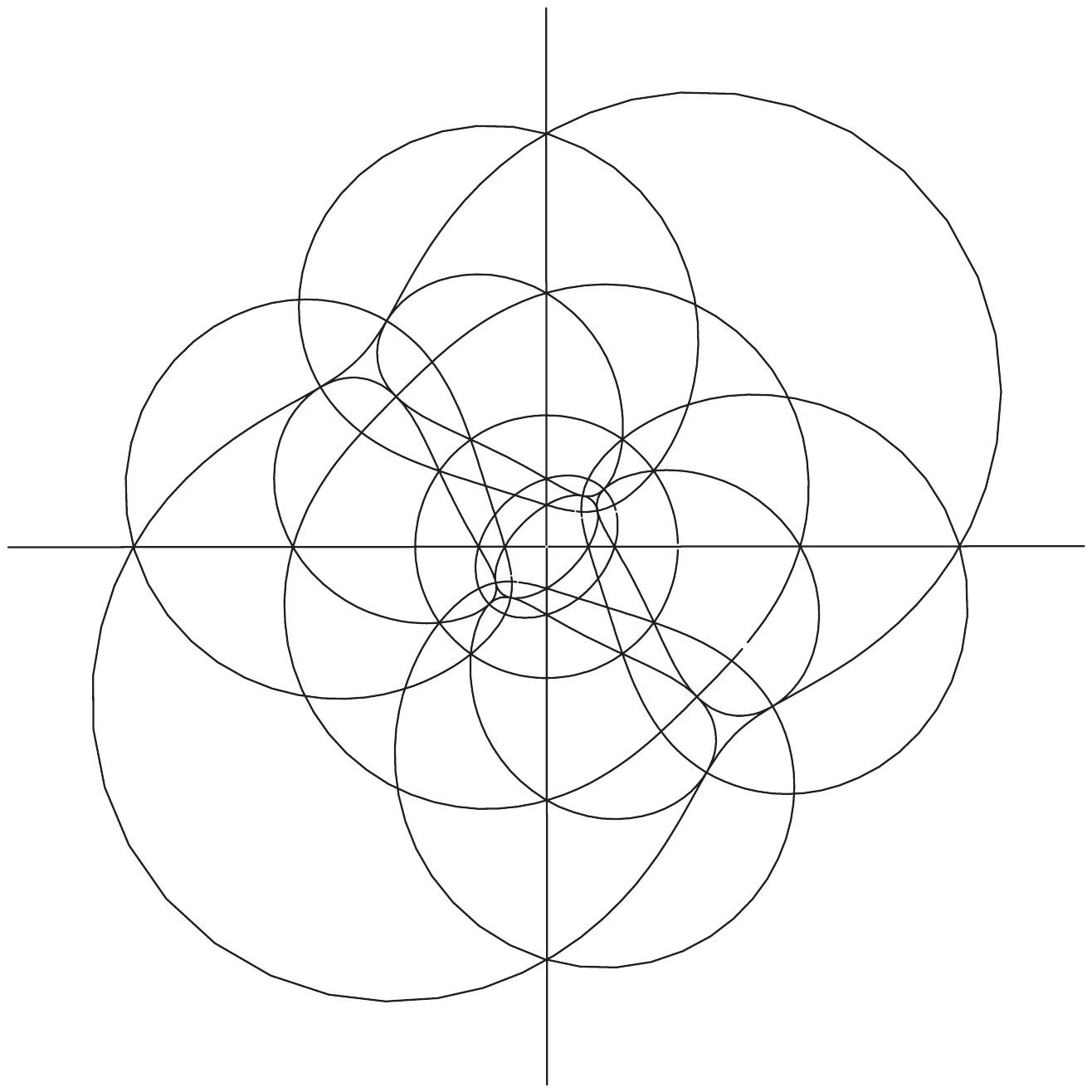}}
\caption{}
\end{figure}

\begin{figure}[b]
     \centering
     \subfigure[$O_3$]{
          \includegraphics[width=.65\textwidth]{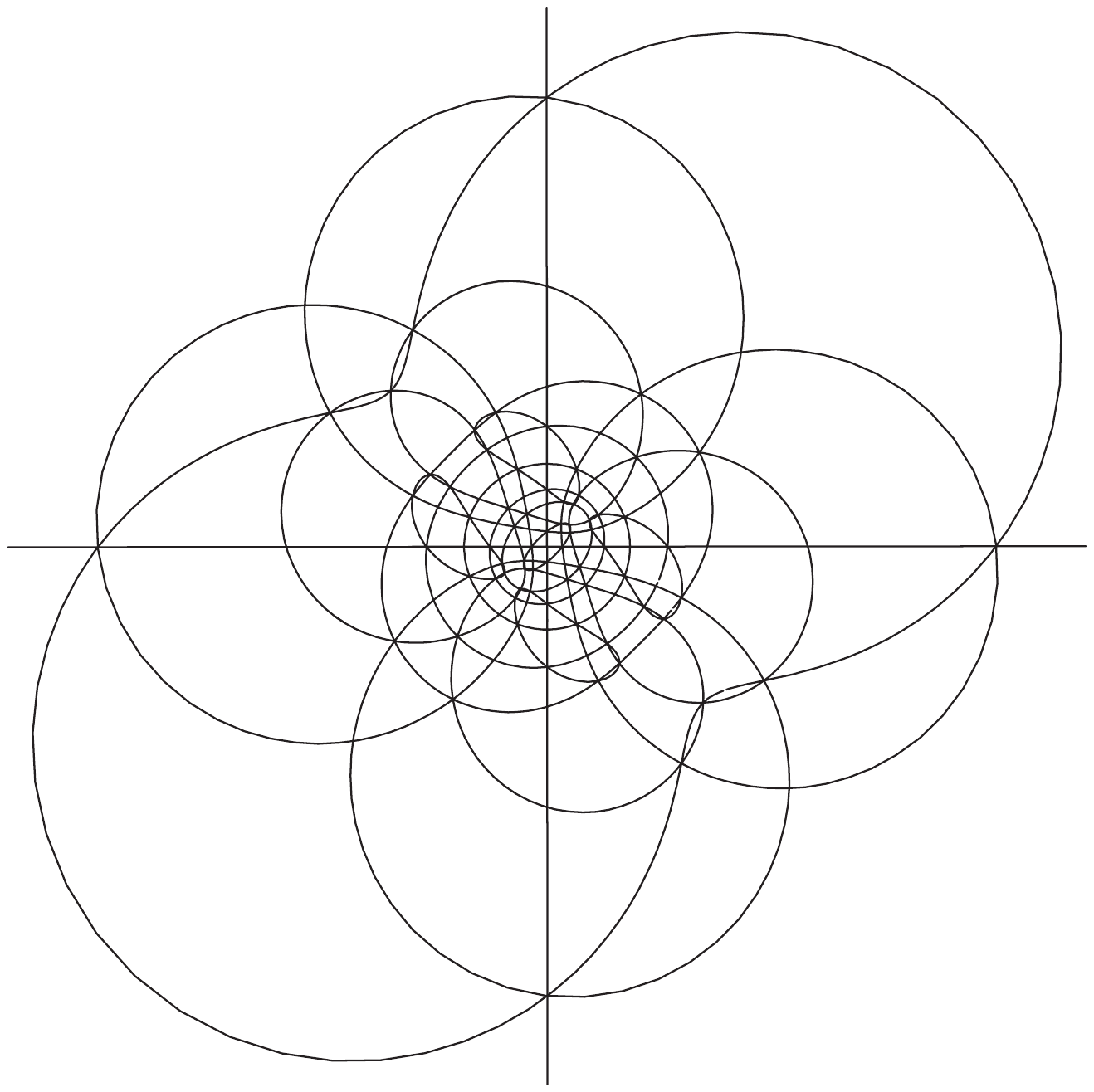}}
     \hspace{.1in}
     \subfigure[$O_4$]{
           \includegraphics[width=.45\textwidth]
                {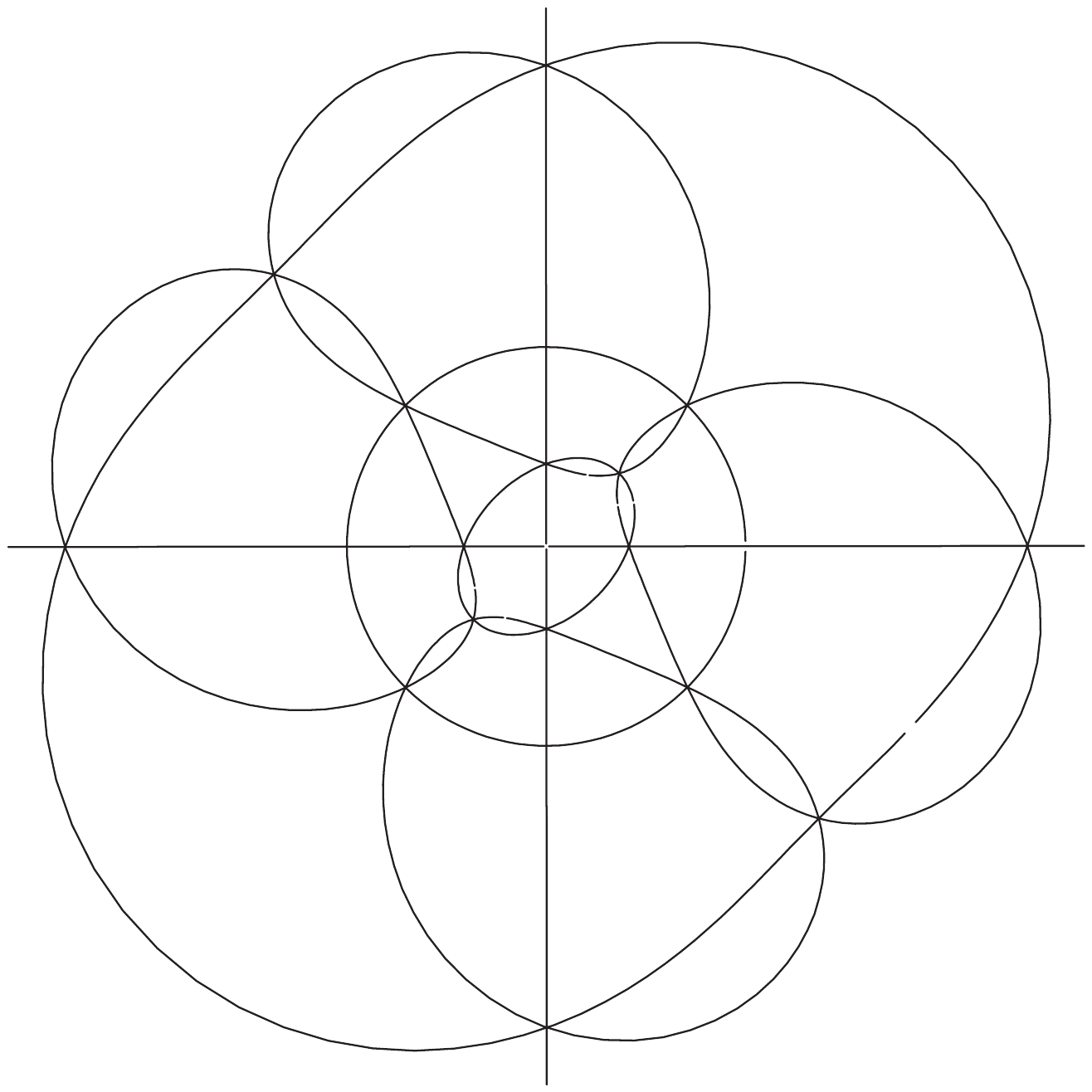}}
     \subfigure[$O_5$]{
          \includegraphics[width=.45\textwidth]{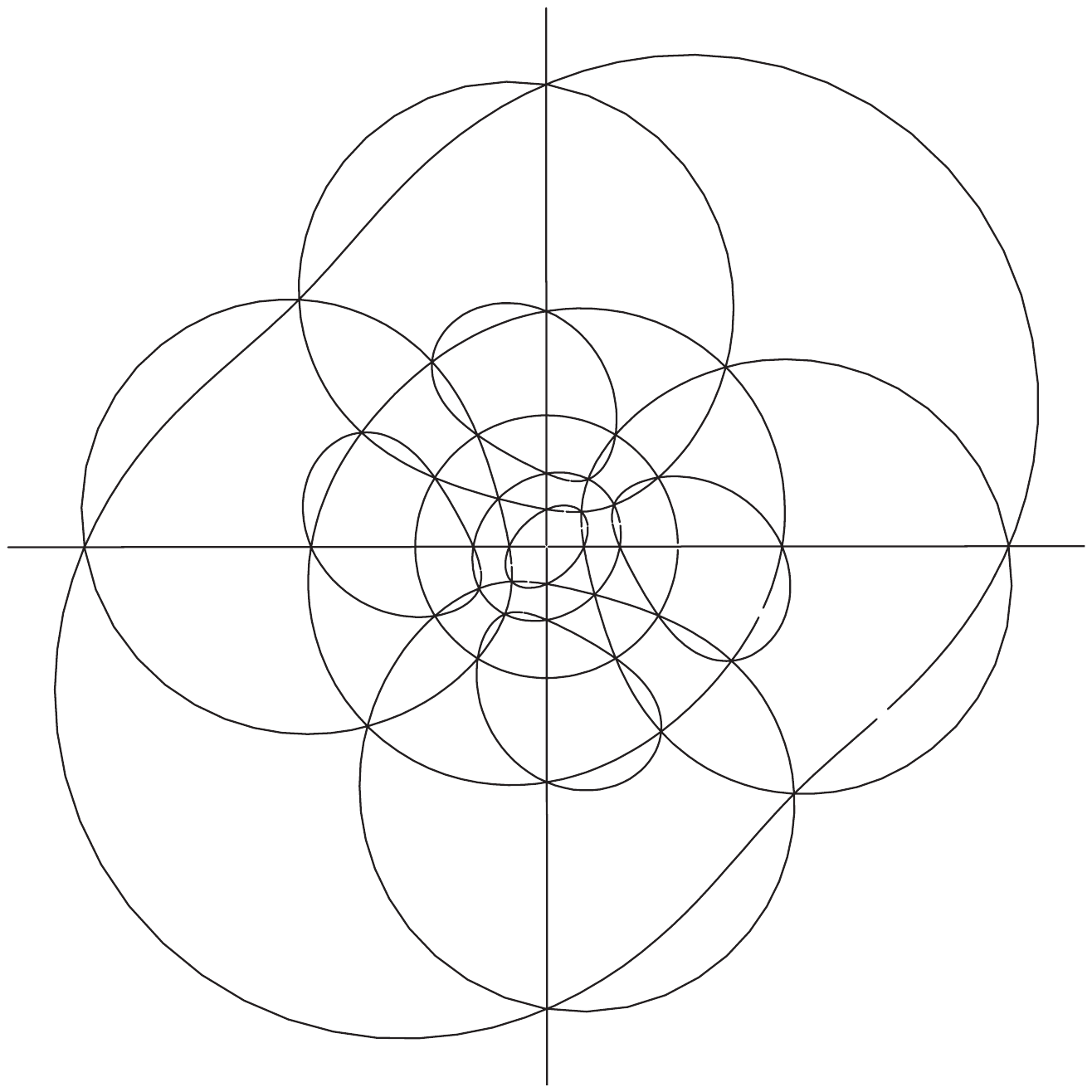}}
\caption{}
\end{figure}

From the above one easily deduces:
\begin{corollary} \label{corvals}
The half-trace values for the points in the  orbits $O_i,1\le i\le 5,$ are
$$0,\pm 1/2, \pm 1/\sqrt {2}, (\pm 1 \pm \sqrt {5})/4.\qed $$
\end{corollary}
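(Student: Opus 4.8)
The plan is to combine the list of finite subgroups of $\mathrm{SL}(2,\mathbb C)$ recalled above with the list of their element orders. By the paragraph preceding the corollary, a point $p=(x_1,x_2,x_{12})^T$ of one of the orbits $O_1,\dots,O_5$ has associated group $G=\langle A_1,A_2\rangle$ finite, of order $24$, $48$ or $120$. Averaging a Hermitian form shows $G$ is conjugate into $\mathrm{SU}(2,\mathbb C)$, and then the classification of finite subgroups identifies $G$ with $\mathrm{BT}_{24}$, $\mathrm{BO}_{48}$ or $\mathrm{BI}_{120}$ respectively. Since conjugation preserves traces, every coordinate of $p$ is a half-trace $\mathrm{trace}(B)/2$ with $B$ one of the three group elements $A_1$, $A_2$, $A_1A_2$ of $G$.

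Next I would enumerate the possible half-traces. An element $B$ of order $m$ in $\mathrm{SU}(2,\mathbb C)$ is conjugate to $\mathrm{diag}(\zeta,\zeta^{-1})$ for a primitive $m$th root of unity $\zeta$, so its half-trace equals $\cos(2\pi j/m)$ for some $j$ with $\gcd(j,m)=1$. The element orders occurring in $\mathrm{BT}_{24}\cong\mathrm{SL}(2,3)$, $\mathrm{BO}_{48}$ and $\mathrm{BI}_{120}\cong\mathrm{SL}(2,5)$ are $\{1,2,3,4,6\}$, $\{1,2,3,4,6,8\}$ and $\{1,2,3,4,5,6,10\}$ respectively (standard, or read off the character tables; cf.\ \cite{Hup}). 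Running through $m\in\{1,2,3,4,5,6,8,10\}$ gives the half-traces $1$ ($m=1$), $-1$ ($m=2$), $-\tfrac12$ ($m=3$), $0$ ($m=4$), $\tfrac12$ ($m=6$), $\pm\tfrac1{\sqrt2}$ ($m=8$), and the four numbers $\tfrac{\pm1\pm\sqrt5}{4}$ (namely $\tfrac{\sqrt5-1}{4},\tfrac{-1-\sqrt5}{4}$ for $m=5$ and $\tfrac{\sqrt5+1}{4},\tfrac{1-\sqrt5}{4}$ for $m=10$). Hence every coordinate of every point of $O_1,\dots,O_5$ lies in $\{\pm1,\,0,\,\pm\tfrac12,\,\pm\tfrac1{\sqrt2},\,\tfrac{\pm1\pm\sqrt5}{4}\}$.

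It then remains to delete $\pm1$ and to check that each of the remaining values really occurs. If some coordinate of $p$ equals $\pm1$, the corresponding element of $\{A_1,A_2,A_1A_2\}$ has trace $\pm2$ and, being of finite order, is semisimple, hence equal to $\pm I$. If $A_1=\pm I$ then $x_{12}=\pm x_2$ and $p=(\pm1,x_2,\pm x_2)^T$; the case $A_2=\pm I$ is symmetric; and if $A_1A_2=\pm I$ then $A_2=\pm A_1^{-1}$, so $x_2=\pm x_1$ and $p=(x_1,\pm x_1,\pm1)^T$. In every case direct substitution into $(\ref{eq0.3})$ gives $E(p)=1$. But each $O_i$ lies on a single level $E_t$ (as $\mathrm{Aut}(F_2)$ fixes $E$), and $t\in\{0.9045\ldots,3/4,1/2,0.3454\ldots\}$ is never $1$; so no coordinate of a point of $O_1,\dots,O_5$ equals $\pm1$, and the half-trace values lie in $\{0,\pm\tfrac12,\pm\tfrac1{\sqrt2},\tfrac{\pm1\pm\sqrt5}{4}\}$. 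Conversely, each of these values is realised, as one sees from the explicit orbits (equivalently from the triple points in Figures $1$ and $2$): $\pm\tfrac1{\sqrt2}$ can only come from an order-$8$ element, hence appears among the coordinates of $O_2$; the four values $\tfrac{\pm1\pm\sqrt5}{4}$ come from orders $5$ and $10$ and appear in $O_1,O_3,O_5$; and $0,\pm\tfrac12$ appear already in $O_4$. The one step that is not pure bookkeeping is the exclusion of $\pm1$ via the identity $E(p)=1$; the other point to be careful about is matching element orders to the three groups correctly, so that no spurious irrational value (such as the $\sqrt3/4$ that an order-$12$ element would contribute) is introduced.
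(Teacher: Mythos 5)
Your strategy is sound and is essentially the deduction the paper leaves to the reader ("from the above one easily deduces"): each coordinate of a point of $O_i$ is the half-trace of one of $A_1,A_2,A_1A_2$ in the associated finite group, an element of order $m$ in $\mathrm{SU}(2,\mathbb C)$ has half-trace $\cos(2\pi j/m)$, and running through the element orders of the binary polyhedral groups gives exactly the listed values together with $\pm1$. Your enumeration of orders $\{1,2,3,4,6\}$, $\{1,2,3,4,6,8\}$, $\{1,2,3,4,5,6,10\}$ and of the corresponding cosines is correct, and the exclusion of $\pm1$ — trace $\pm2$ plus finite order forces $\pm I$, which forces $E(p)=1$, impossible since each $O_i$ sits on an invariant level $t\ne1$ — is a clean way to finish that the paper does not spell out (its implicit justification is rather the rotation-number table following the corollary, which shows each coordinate is $\cos(2\pi p/q)$ and lists which $\rho$ occur in each orbit; your route through the group elements amounts to the same bookkeeping).

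The one step you should not wave at is "the classification of finite subgroups identifies $G$ with $\mathrm{BT}_{24}$, $\mathrm{BO}_{48}$ or $\mathrm{BI}_{120}$ respectively": the order alone does not identify $G$, since $\mathrm{SU}(2,\mathbb C)$ also contains cyclic and binary dihedral subgroups of orders $24$, $48$ and $120$, and these would spoil your list (e.g.\ a cyclic or binary dihedral group of order $24$ contains elements of order $12$, contributing the half-trace $\pm\sqrt{3}/2$ — not $\sqrt{3}/4$ as in your aside, but the point stands). The paragraph you cite names only the order-$120$ group as $\mathrm{BI}_{120}$; the identifications $\mathrm{BO}_{48}$ and $\mathrm{BT}_{24}$ for $O_2$ and $O_4$ appear in the orbit descriptions immediately after the corollary, which you may simply quote. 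Alternatively the unwanted cases are ruled out directly from the definition of the $O_i$: if $\langle A_1,A_2\rangle$ were cyclic, then $A_1,A_2$ would commute, hence share an eigenvector and be simultaneously triangularizable, so $p\in E_1$, contradicting $t\ne1$; if it were binary dihedral with cyclic part $\langle a\rangle$, then either exactly one of $A_1,A_2$ lies outside $\langle a\rangle$ (whence that generator and $A_1A_2$ have trace $0$) or both do (whence $A_1$ and $A_2$ have trace $0$), so two of $x_1,x_2,x_{12}$ vanish and $p$ lies on a coordinate axis, which is also excluded. Once cyclic and binary dihedral are removed, the classification plus the order forces $G=\mathrm{BT}_{24},\mathrm{BO}_{48},\mathrm{BI}_{120}$ respectively, and with that patch your proof is complete; the final realisation of each value can, as you say, be read off the typical points and the $\rho_y$ data.
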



We now list further  properties of the orbits $\mathcal O_1,\dots, \mathcal O_5$.   
\medskip

{\bf 72} There is a single $\mathrm{Aut}(F_2)$  orbit $O_3$ of size
$72$.  This orbit is on level $3/4$.  The permutation action of
$\mathrm{Aut}(F_2)$ on $O_3$ 
gives a group of order $2^{34}\cdot 3^5\cdot 5\cdot 7$ which is an
imprimitive permutation group: the $72$ points can be partitioned
into  $9$ blocks of size $8$.  The matrix group $\langle
A_1,A_2\rangle$ associated to any point of this orbit is the binary 
icosahedral group  $\mathrm{BI}_{120}$.
\medskip

{\bf 40} There are two $\mathrm{Aut}(F_2)$   orbits $O_1, O_5$ of size
$40$.  These 
are on levels $(3+\zeta_5^2+\zeta_5^3)/4=0.34549\dots $ and
$(2-\zeta_5^2-\zeta_5^3)/4=0.904508\dots$ (respectively), where $\zeta_5=\exp{2\pi i/5}$.  The
action of $\mathrm{Aut}(F_2)$ on either of these orbits of size $40$ gives an
imprimitive group with ten blocks of size $4$, the action on the
ten blocks being that of $S_{10}$.  (Points in a block of size $4$ are
obtained from each other by changing the sign of two coordinates.)  
The elements of the two sets
$O_1, O_5$ are defined over the cyclotomic field $\mathbb{ Q}(\zeta_5)$
and are interchanged by the Galois automorphism $\alpha:\mathbb{
Q}(\zeta_5) \to \mathbb{ Q}(\zeta_5), \alpha(\zeta_5)=\zeta_5^2$.  The
matrix group $\langle A_1,A_2\rangle$ associated to any point of these
orbits is  also $\mathrm{BI}_{120}$.
\medskip

{\bf 36} There is a single $\mathrm{Aut}(F_2)$   orbit $O_2$ of size $36$.  This
orbit is on level $3/4$.  This case has permutation group of order
$2^{16}\cdot 3^4$ and is imprimitive with blocks of sizes
$2,4,12$.   The associated group $\langle A_1,A_2\rangle$ is $\mathrm{BO}_{48}$.
\medskip

{\bf 16} There is a single $\mathrm{Aut}(F_2)$   orbit $O_4$ of size $16$.  The associated 
matrix group $\langle A_1,A_2\rangle$ is $\mathrm{BT}_{24}$ and is isomorphic
to $\mathrm{SL}(2, \mathbb{ F}_3)$.  The permutation group has order $2^{12}\cdot 3^2$
and has blocks of size $4$.  This orbit is on level $1/2$.  
\medskip


Any point  $p \in \partial\mathcal T$ with finite $\mathrm{Aut}(F_2)$-orbit has the form
$\Pi(r/n,s/n)^T$ for some $(r/n,s/n)^T$ in $\mathbb{ Q}^2$.  Then the
action of $\mathrm{Aut}(F_2)$ on $p$ is determined by the action of
$\mathrm{SL}(2,\mathbb 
Z)$ on $(r/n,s/n)^T$, this action being given by $\mathrm{SL}(2,\mathbb{ Z}_n)$.
Thus the kernel of the permutation action  on all such points is a congruence
subgroup.


\vspace{4mm}

We may also discuss  the 
$\mathrm{Aut}(F_2)$-action on the finite orbits 
$O_1,O_2,O_3,O_4,O_5$ in the following way.   

If 
$|y|\leq 1$ 
then the restriction of the map 
$\sigma_1: (x,y,z)\mapsto (z,y,2yz-x)$ 
to the affine plane 
$y=\cos(2\pi\rho_y)$ 
is topologically conjugate to a rotation through the angle 
$2\pi\rho_y$, 
because the matrices 
$$ \left( \begin{array}{cc}
0&1\\ -1&2y \end{array} \right), \;\;
\left( \begin{array}{cc}
\cos(2\pi\rho_y)&-\sin(2\pi\rho_y) \\ 
\sin(2\pi\rho_y)&\cos(2\pi\rho_y) \end{array} \right) 
$$
are similar.   
Therefore the ellipse 
$x^2+z^2-2xyz=t-y^2$, 
in which the plane 
$y=\cos(2\pi\rho_y)$ 
meets 
$E_t,$ 
is mapped to itself by 
$\sigma_1$ 
with rotation number 
$\rho_y$.   
Thus points of the finite orbits 
$O_1,O_2,O_3,O_4,O_5$ 
must have 
$y$
(and similarly 
$x,z$) 
of the form 
$\cos(2\pi p/q)$.   
The following table lists the number of points of each 
$O_i$ 
in each such ellipse (including the case 
$\rho_0 = \frac{1}{4}$, 
which is a circle in the coordinate plane 
$y=0$).   
It is these points that are represented in Figures 1 and 2 as the
triple points of intersection of ellipses (after stereographic
projection).   
Our 
$\sigma_1$ 
rotates parallel slices by related amounts, in contrast to the Rubik
cube in which these slices can be rotated independently!  We indicate this in the 
following table.

$$
\begin{array}{|c|c|c|c|c|c|c|c|c|c|c|c|c|c|c|}
\hline
\rho_y&\frac{2}{5} &\frac{3}{8} & \frac{1}{3} & \frac{3}{10} & 
\frac{1}{4}&  \frac{1}{5} & \frac{1}{6} & \frac{1}{8} & \frac{1}{10} 
& \mbox{Total} & t & \mbox{Typical points} \\
\hline
O_1& & &6 &10 &8 &10 &6 & & &40 & \frac{5+\sqrt5}{8} & 
(\frac{1}{2},\frac{\sqrt5+1}{4},\frac{\sqrt5+1}{4})^T \\
&&&&&&&&&&&&
(\frac{\sqrt5+1}{4},\frac{\sqrt5+1}{4},\frac{\sqrt5+1}{4})^T  \\
\hline
O_2& &8 &6 & &8 & &6 &8 & &36 &\frac{3}{4} & 
(\frac{1}{2}\sqrt2,\frac{1}{2},\frac{1}{2}\sqrt2)^T \\
&&&&&&&&&&&&
(0,\frac{1}{2},\frac{1}{2}\sqrt2)^T  \\
\hline
O_3& 10& &12 &10 &8 &10 &12 & &10 &72 & \frac{3}{4} & 
(\frac{1}{2},\frac{\sqrt5-1}{4},\frac{\sqrt5+1}{4})^T \\
&&&&&&&&&&&&
(0,\frac{\sqrt5-1}{4},\frac{\sqrt5 +1}{4})^T  \\
\hline
O_4& & &6 & &4 & &6 & & &16 &\frac{1}{2} & 
(\frac{1}{2},\frac{1}{2},\frac{1}{2})^T \\
&&&&&&&&&&&&
(\frac{1}{2},0,\frac{1}{2})^T  \\
\hline
O_5& & &6 &10 &8 &10 &6 & & &40 & \frac{5-\sqrt5}{8} & 
(\frac{1}{2},\frac{1-\sqrt5}{4},\frac{1-\sqrt5}{4})^T \\
&&&&&&&&&&&&
(\frac{1-\sqrt5}{4},\frac{1-\sqrt5}{4},\frac{1-\sqrt5}{4})^T  \\
\hline
\end{array}
$$
\medskip

\section{Preliminaries for $n=3$}\label{sec2}

 As we have seen in the above, in the $n=2$ case there is an invariant of the action 
$E(x,y,z)=x^2+y^2+z^2-2xyz$ that comes from trace$(a_1^{-1}a_2^{-1}a_1a_2)$. This `Fricke character' however is also determined by the fact that for real $\theta_1,\theta_2$ 
the values $x=\cos \theta_1, y=\cos \theta_2, z=\cos(\theta_1+\theta_2)$ satisfy the relation 
$E(  x,y,z ) =1$. 
In fact the relation $E(x,y,z)-1=0$ generates the ideal of polynomials $p \in \mathbb Q[x,y,z]$ such that
$p(\cos \theta_1, \cos \theta_2, \cos(\theta_1+\theta_2))=0$ for all  $\theta_1,\theta_2\in \mathbb R$.

Further, the relevance of   $x=\cos \theta_1, y=\cos \theta_2, z=\cos(\theta_1+\theta_2)$ is that they are the half-traces of the  matrices
 $A_i=\begin{pmatrix} \cos \theta_i+i\sin \theta_i&0\\
0&\cos \theta_i-i\sin \theta_i\end {pmatrix}, i=1,2$ and
 $A_1A_2=\begin{pmatrix} \cos (\theta_1+\theta_2)+i\sin (\theta_1+\theta_2)&0\\
0&\cos (\theta_1+\theta_2) -i\sin (\theta_1+\theta_2)\end {pmatrix}$. We note that the triple   $A_1,A_2,A_1A_2$ pair-wise  commute. Thus they do not give generic values for $x,y,z$. 
However they give special values of $x,y,z$ that satisfy $E=1$.

In the case $n=3$   that we are   considering next these two different ways of understanding $E$ give two different algebraic objects, one of which will be an invariant for the action of $\mathrm{Aut}(F_3)$, while the other will turn out to be an ideal.

To find the ideal just referred to we consider three diagonal commuting matrices:
If $A_i=\begin{pmatrix} \cos \theta_i+i\sin \theta_i&0\\
0&\cos \theta_i-i\sin \theta_i\end {pmatrix}$, for $i=1,2,3$,  then  $ x_i=\cos\theta_i,  x_{ij}=\cos (\theta_i+\theta_j), x_{ijk}=\cos (\theta_i+\theta_j+\theta_k)$.
We wish to find the ideal of relations satisfied by these $x_i,x_{ij},x_{ijk}$.
Then in this situation $ x_i=\cos \theta_i,i=1,2,3$,  and we let  $y_i=\sin \theta_i,i=1,2,3$,
 $ x_{ij}=\cos (\theta_i+\theta_j),i,j=1,2,3$,  and similarly $y_{ij}=\sin (\theta_i+\theta_j),i,j=1,2,3$,
 $ x_{ijk}=\cos (\theta_i+\theta_j+\theta_k),i=1,2,3$, and $y_{ijk}=\sin (\theta_i+\theta_j+\theta_k),i,j,k=1,2,3$.

 Let $\mathcal I$ be the ideal of $\mathbb Q[x_1,x_2,\dots,x_{123},y_1,y_2,\dots,y_{123}]$ consisting of all polynomials $f(  x_1,x_2,\dots,x_{123},y_1,y_2,\dots,y_{123})$ such that
 $$f(\cos \theta_1,\cos \theta_2,\dots, \cos (\theta_1+\theta_2+\theta_3),
    \sin \theta_1, \sin \theta_2, \dots  , \sin (\theta_1+\theta_2+\theta_3))=0.$$

Then  using basic trigonometric  identities we  see that $\mathcal I$  contains  the polynomials
\begin{align*}&
 x_1^2+y_1^2-1,\quad 
 x_2^2+y_2^2-1,\\&
 x_3^2+y_3^2-1,\quad 
 x_{12}^2+y_{12}^2-1,\\ &
 x_{13}^2+y_{13}^2-1,\quad 
 x_{23}^2+y_{23}^2-1,\quad  x_{123}^2+y_{123}^2-1,\\&
 x_{12}- x_1  x_2+y_1 y_2,\quad 
 x_{23}- x_2  x_3+y_2 y_3,\\ &
 x_{13}- x_1  x_3+y_1 y_3,\quad 
y_{12}-y_1  x_2- x_1 y_2,\\ &
y_{23}-y_2  x_3- x_2 y_3,\quad 
y_{13}-y_1  x_3- x_1 y_3,\\&
 x_{123}- x_1  x_{23}+y_1 y_{23},\quad 
y_{123}-y_1  x_{23}- x_1 y_{23}.
\end{align*}

Finding a Gr\"obner basis for $\mathcal I$ (using \cite {Ma}), and then doing an elimination we obtain the following relations among the $ x_i, x_{ij}, x_{ijk}$:
\begin{align*}&
 x_3^2 - 2  x_3  x_{12}  x_{{12}3} +  x_{12}^2 +  x_{{12}3}^2 - 1,\\&
     x_2^2 - 2  x_2  x_{13}  x_{{12}3} +  x_{13}^2 +  x_{{12}3}^2 - 1,\\&
     x_2  x_3  x_{{12}3}^2 - \frac {1} {2}   x_2  x_3 - \frac {1} {2}   x_2  x_{12}  x_{{12}3} - \frac {1} {2}   x_3  x_{13}  x_{{12}3} + \frac {1} {2}   x_{12}  x_{13} - \frac {1} {2}   x_{23}  x_{{12}3}^2 + \frac {1} {2}   x_{23},\\&
     x_1 + 2  x_2  x_3  x_{{12}3} -  x_2  x_{12} -  x_3  x_{13} -  x_{23}  x_{{12}3},\end{align*} 
     \begin{align*}&
     x_2  x_{12}  x_{23} - 2  x_2  x_{13}  x_{{12}3}^2 +  x_2  x_{13} + 2  x_3  x_{12}  x_{{12}3}^2 -  x_3  x_{12} -  x_3  x_{13}  x_{23} -  x_{12}^2  x_{{12}3} +  x_{13}^2  x_{{12}3},\\&
     x_2  x_3  x_{12} - 2  x_2  x_{{12}3}^3 +  x_2  x_{{12}3} -  x_3  x_{23}  x_{{12}3} -  x_{12}^2  x_{13} + 2  x_{12}  x_{23}  x_{{12}3}^2 -  x_{12}  x_{23} +  x_{13}  x_{{12}3}^2,\\&
     x_2  x_3  x_{23} -  x_2  x_{13}  x_{{12}3} -  x_3  x_{12}  x_{{12}3} + \frac {1} {2}   x_{12}^2 + \frac {1} {2}   x_{13}^2 - \frac {1} {2}   x_{23}^2 +  x_{{12}3}^2 - \frac {1} {2}. 
\end{align*} 

Let $R=\mathbb Q[ x_1, x_2, x_3, x_{12}, x_{13}, x_{23},  x_{123}]$ and let $\mathcal X \subset R$ denote the ideal generated by these polynomials.
One finds that $\mathcal X$ has dimension $3$ with $ x_{13}, x_{23}, x_{123}$ being algebraically independent. The ideal $\mathcal X$ is one of the analogues of the Fricke character $E$ from the $n=2$ situation. 


We define the following automorphisms of $F_3$:
\begin{align*} &
U: [a_1,a_2,a_3] \mapsto [a_1a_2,a_2,a_3];\\&
Q: [a_1,a_2,a_3] \mapsto [a_2,a_3,a_1];\\&
S: [a_1,a_2,a_3] \mapsto [a_1^{-1},a_2,a_3]\\&
P: [a_1,a_2,a_3] \mapsto [a_2,a_1,a_3].
\end{align*}
It is well-known \cite [p. 164]  {MKS} that Aut$(F_3)$ is generated by $U,P,S,Q$.  The action (on the left) of   ${\rm Aut}(F_3)$ on $R$ is determined by
the actions of the generators  $U,P,S,Q$:
\begin{align*}
&U([   x_{1},  x_{2},  x_{3},  x_{12},  x_{13},  x_{23},  x_{123} ])=[ x_{12}, x_{2}, x_{3},2  x_{2}  x_{12}- x_{1}, x_{123}, x_{23},2  x_{2}  x_{123}- x_{13}];\\&
Q([   x_{1},  x_{2},  x_{3},  x_{12},  x_{13},  x_{23},  x_{123} ])=[ x_{2}, x_{3}, x_{1}, x_{23}, x_{12}, x_{13}, x_{123}];\\&
S([   x_{1},  x_{2},  x_{3},  x_{12},  x_{13},  x_{23},  x_{123} ])\\&\qquad 
=[x_1,
    x_2,
    x_3,
    2  x_1  x_2 -  x_{12},
    2  x_1  x_3 -  x_{13},
     x_{23},
    2  x_1  x_{23} -  x_{123}
];\\&
P([    x_{1},   x_{2},   x_{3},   x_{12},   x_{13},   x_{23},   x_{123} ])\\&\qquad =[
     x_2,
     x_1,
     x_3,
     x_{12},
     x_{23},
     x_{13},
    -4  x_1  x_2  x_3 + 2  x_1  x_{23} + 2  x_2  x_{13} + 2  x_3  x_{12} -  x_{123}
].
\end{align*}
This also gives the actions of $U,P,S,Q$ (on the right) on elements of $\mathbb R^7$. We now show that this really does determine an action of
Aut$(F_3)$ on $R$ and on $\mathbb R^7$:

\begin{lemma} \label{lemact}
The above action of the generators $U,P,S,Q$ determines a homomorphism ${\rm Aut}(F_3) \to {\rm Aut}(R)$ and an anti-homomorphism 
 ${\rm Aut}(F_3) \to {\rm Homeo}(\mathbb R^7),$ such that for all $\sigma \in {\rm Aut}(F_3)$ we have
 $$\sigma(x_1,x_2,x_3,x_{12},x_{13},x_{23},x_{123})=(x_1,x_2,x_3,x_{12},x_{13},x_{23},x_{123})\sigma.$$

Further, the ideal $\mathcal X$ is invariant under this action.
\end{lemma}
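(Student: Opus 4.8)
The plan is to prove the three assertions of Lemma~\ref{lemact} in the following order: (1) the formulas for $U,P,S,Q$ on $\mathbb{R}^7$ really are induced by trace maps; (2) consequently they satisfy all the defining relations of $\mathrm{Aut}(F_3)$, so we get the claimed (anti-)homomorphism; (3) the ideal $\mathcal{X}$ is invariant. For~(1), recall that a point $(x_1,\dots,x_{123})^T\in\mathbb{R}^7$ is (as noted in \S\ref{sec1} for $n=2$, and similarly here) the tuple of half-traces $x_I=\mathrm{trace}(A_{i_1}\cdots A_{i_k})/2$ of some triple $(A_1,A_2,A_3)\in\mathrm{SL}(2,\mathbb{C})^3$; one checks this is possible for \emph{every} point of $\mathbb{R}^7$, exactly as in the $n=2$ case. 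Each generator $\tau\in\{U,P,S,Q\}$ of $\mathrm{Aut}(F_3)$ sends $(A_1,A_2,A_3)$ to a new triple $(A_1',A_2',A_3')$ obtained by the same words in the $A_i$ and $A_i^{-1}$ as $\tau$ applies to the $a_i$ (e.g.\ $U$ gives $(A_1A_2,A_2,A_3)$), and the half-traces of the new triple are computed from the old ones purely via the standard $\mathrm{SL}(2,\mathbb{C})$ trace identities $\mathrm{trace}(AB)=\mathrm{trace}(A)\mathrm{trace}(B)-\mathrm{trace}(AB^{-1})$, $\mathrm{trace}(A^{-1})=\mathrm{trace}(A)$. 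Carrying this out for the seven half-traces and each of $U,P,S,Q$ reproduces exactly the displayed polynomial formulas; this is a routine (if lengthy) verification. In particular the polynomial map depends only on the half-trace tuple, not on the chosen representative triple, so it is well-defined on $\mathbb{R}^7$, and it is a homeomorphism because each generator is invertible in $\mathrm{Aut}(F_3)$ and its inverse gives a polynomial inverse by the same argument.

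For~(2): because the action on half-trace tuples is literally induced from the action of $\mathrm{Aut}(F_3)$ on triples of matrices, any relation $w(U,P,S,Q)=1$ holding in $\mathrm{Aut}(F_3)$ forces $w$ of the corresponding matrix substitutions to be (up to conjugacy) the identity triple, hence forces the induced half-trace map $w$ to be the identity on every half-trace tuple, i.e.\ on all of $\mathbb{R}^7$. Since the trace-map formulas are polynomial, agreeing on $\mathbb{R}^7$ forces them to agree as polynomial substitutions on $R$. Thus the assignment $U,P,S,Q\mapsto$ (their formulas) extends to a well-defined group homomorphism $\mathrm{Aut}(F_3)\to\mathrm{Aut}(R)$; the sign/side bookkeeping (composition on triples on the left versus substitution on coordinates) gives the anti-homomorphism $\mathrm{Aut}(F_3)\to\mathrm{Homeo}(\mathbb{R}^7)$, and the identity $\sigma(x_1,\dots,x_{123})=(x_1,\dots,x_{123})\sigma$ is then just the statement that the same polynomial formulas describe both, exactly as in equation~(\ref{eq1.1}) and the surrounding discussion for $n=2$.

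For~(3), invariance of $\mathcal{X}$: it suffices to check that each of the seven generating polynomials of $\mathcal{X}$ is mapped into $\mathcal{X}$ by each of $U,P,S,Q$ (then any element of $\mathrm{Aut}(F_3)$, being a word in these, preserves $\mathcal{X}$; and applying this to the inverse generators shows the action is by automorphisms of $R$ fixing $\mathcal{X}$, hence a bijection of the set of generating polynomials' $\mathbb{Q}$-span modulo $\mathcal{X}$). Conceptually this is forced: $\mathcal{X}$ was defined by eliminating the $y$-variables from the ideal $\mathcal{I}$ of all trigonometric relations among $(\cos\theta_i,\cos(\theta_i+\theta_j),\cos(\theta_i+\theta_j+\theta_k))$, and the $\mathrm{Aut}(F_3)$ action is induced by a $\mathrm{GL}(3,\mathbb{Z})$-style substitution $\theta\mapsto M\theta$ on the angles (together with sign changes from $S$), which permutes the parametrising family and hence preserves the vanishing ideal. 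The honest proof is the finite computation: substitute each generator's formulas into each generating polynomial of $\mathcal{X}$ and reduce modulo a Gr\"obner basis of $\mathcal{X}$, verifying the remainder is $0$; this is done with \cite{Ma}.

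The main obstacle is not conceptual but the verification in step~(1)/(3): confirming that the stated polynomial formulas for $U,P,S,Q$ are exactly what the trace identities produce (especially the messy $x_{123}$-components of $U$, $S$, and $P$), and then checking the seven generating polynomials of $\mathcal{X}$ land back in $\mathcal{X}$ under all four generators. Both are mechanical but error-prone; I would organise step~(1) by first recording $\mathrm{trace}(A_iA_j)$, $\mathrm{trace}(A_iA_jA_k)$ and the needed mixed traces like $\mathrm{trace}(A_1A_2A_3)$ in terms of the $x_I$, and then reading off each generator's effect, and step~(3) by delegating the ideal membership to Gröbner-basis reduction in Magma.
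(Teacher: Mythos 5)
There is a genuine gap at the foundation of your argument. You claim that every point of $\mathbb{R}^7$ is the tuple of half-traces of some triple $(A_1,A_2,A_3)\in\mathrm{SL}(2,\mathbb{C})^3$, ``exactly as in the $n=2$ case''. This is false for $n=3$: by the Fricke/Horowitz relation (quoted in the paper right after Lemma \ref{lemfix}), every trace septuple arising from a triple of matrices satisfies $F(p)=0$, so the realizable points fill out only the hypersurface $V(F)$, not all of $\mathbb{R}^7$. Consequently your step (2) collapses: a relation $w(U,P,S,Q)=1$ of ${\rm Aut}(F_3)$, interpreted through matrix triples, only forces the composite polynomial map to be the identity on $V(F)$, i.e.\ the polynomial identities hold merely modulo the principal ideal $(F)$, and agreement on a proper hypersurface does not force agreement ``as polynomial substitutions on $R$'' or on all of $\mathbb{R}^7$. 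But the lemma asserts precisely a homomorphism into ${\rm Aut}(R)$ for the full polynomial ring $R$ and an action on all of $\mathbb{R}^7$ (a point the paper stresses in \S 8, where for $n>3$ one must instead restrict to the trace locus); your argument at best yields an action on $V(F)$, or on $R/(F)$. The same issue already affects your step (1): invertibility of each generator map as a self-homeomorphism of $\mathbb{R}^7$ cannot be read off from the matrix picture alone, since that picture only controls points of $V(F)$.

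What the lemma actually requires --- and what the paper does --- is a direct verification: take the explicit finite presentation of ${\rm Aut}(F_3)$ from \cite{MKS} and check that each relator, applied as a composite of the polynomial substitutions $U,P,S,Q$, acts as the identity of $R$ (an exact polynomial identity, not an identity modulo $F$); the compatibility $\sigma(x_1,\dots,x_{123})=(x_1,\dots,x_{123})\sigma$ is then proved by induction on the length of $\sigma$ as a word in the generators. This finite check is mechanical, but it is the real content of the statement and cannot be replaced by the realizability argument. Your step (3), verifying that each element of a generating set of $\mathcal{X}$ is sent into $\mathcal{X}$ by $U,P,S,Q$ via Gr\"obner-basis reduction, coincides with the paper's treatment and is fine once the action itself has been established.
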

\noindent {\it Proof}  A presentation for Aut$(F_3)$ is given in \cite [p. 164]  {MKS}:
\begin{align*}\langle U,Q,P,S|&
P^2,
Q^3,
S^2,
(Q P)^2,
S Q P=Q P S,
S Q ^{-1} P Q=Q ^{-1} P Q S,\\&
S Q ^{-1} S Q=Q ^{-1} S Q S,
(U,Q P Q ^{-1} P Q),
(U,Q ^{-2} S Q^2),
(P S P U)^2,\\&
P U P S U S P S=U,
(P Q ^{-1} U Q)^2 U Q ^{-1}=U Q ^{-1} U,
U S U S=S U S U, \\&
(U,P Q ^{-1} S U S Q P),
(U,P Q ^{-1} P Q P U P Q ^{-1} P Q P)\rangle.
\end{align*}
(The presentation from  \cite [p. 164]  {MKS}  seems to have two further relations, however these relations only apply to the situation $n>3.$)
 Thus to check that we do obtain a homomorphism
Aut$(F_3) \to {\rm Aut}(R)$ we just need to show that each relation in the presentation acts trivially on $R$. This is straight-forward. The fact that  $\sigma(x_1,x_2,x_3,x_{12},x_{13},$ $ x_{23},x_{123})=(x_1,x_2,x_3,x_{12},x_{13},x_{23},x_{123})\sigma$ is proved by induction on the length of $\sigma$ as a word in the generators $U,Q,S,P$.

Now  to check that the ideal $\mathcal X$ is invariant under this action we just show that for each element $x$ of a basis  for $\mathcal X$ we have $U(x),P(x),S(x),Q(x) \in \mathcal X$.
This is also straight-forward.\qed\medskip


We will need the following result using an element discovered by Horowitz \cite {ho}; this gives us an element that is fixed by Aut$(F_3)$ (thus this element also corresponds to $E$ in the $n=2$ case):
\begin{lemma} \label{lemfix}
The element
\begin{align*}
&F= x_1^2 
+  x_2^2 
+  x_3^2
+  x_{12}^2
+ x_{13}^2 +  x_{23}^2 +  x_{123}^2 +
4  x_1  x_2  x_3  x_{123} \\&- 2  x_1  x_2  x_{12} - 2  x_1  x_3  x_{13} - 2  x_1  x_{23}  x_{123 }- 
    2  x_2  x_3  x_{23} - 2  x_2  x_{13}  x_{123 } - 2  x_3  x_{12}  x_{123}\\&  + 2  x_{12}  x_{13}  x_{23} -1,
    \end{align*}
    is in $\mathcal X$ and 
    is fixed by each   $\alpha \in 
   {\rm Aut}(F_3).$
    \end{lemma}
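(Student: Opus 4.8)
The plan is to treat the two assertions — that $F\in\mathcal X$ and that $F$ is fixed by $\mathrm{Aut}(F_3)$ — separately. For the first, recall that the eight displayed generators of $\mathcal X$ were produced by eliminating $y_1,\dots,y_{123}$ from $\mathcal I$, so that $\mathcal X=\mathcal I\cap R$, and that $\mathcal I$ is by definition the ideal of all polynomials vanishing under $x_i\mapsto\cos\theta_i$, $x_{ij}\mapsto\cos(\theta_i+\theta_j)$, $x_{123}\mapsto\cos(\theta_1+\theta_2+\theta_3)$ (with the matching sines for the $y$'s). Since $F$ involves only the $x$'s, it suffices to check that $F$ vanishes identically under this substitution — a routine trigonometric identity, which one recognises as the ``all matrices diagonal, hence commuting'' specialisation of the classical Fricke relation for three matrices in $\mathrm{SL}(2,\mathbb C)$. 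Equivalently, and this is the form I would actually record, $F$ reduces to $0$ modulo the Gr\"obner basis of $\mathcal X$; concretely $F$ can be written explicitly as a $\mathbb Q$-linear combination of the eight generators, which is verified in \cite{Ma}.

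For the invariance, Lemma~\ref{lemact} says that the rule associating to $\alpha\in\mathrm{Aut}(F_3)$ its action on $R$ is a group homomorphism $\mathrm{Aut}(F_3)\to\mathrm{Aut}(R)$, so it is enough to verify $\alpha(F)=F$ for $\alpha$ running over the generators $U,P,S,Q$, using the explicit polynomial formulas recorded just before Lemma~\ref{lemact}. The case $\alpha=Q$ is immediate and can be written out by hand: $Q$ cyclically permutes $x_1,x_2,x_3$ and (correspondingly) cyclically permutes $x_{12},x_{23},x_{13}$ while fixing $x_{123}$, and one checks monomial by monomial that each term of $F$ is carried to another term of $F$ with the same coefficient. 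The cases $\alpha=S$ (which fixes $x_1,x_2,x_3,x_{23}$ and sends $x_{12},x_{13},x_{123}$ to $2x_1x_2-x_{12}$, $2x_1x_3-x_{13}$, $2x_1x_{23}-x_{123}$) and $\alpha=P$ (which interchanges $x_1\leftrightarrow x_2$ and $x_{13}\leftrightarrow x_{23}$, and sends $x_{123}$ to $2x_1x_{23}+2x_2x_{13}+2x_3x_{12}-4x_1x_2x_3-x_{123}$) amount to substituting into $F$ and expanding a polynomial of degree $\le 4$ with about fifteen terms, after which everything collapses back to $F$.

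The genuine computational obstacle is $\alpha=U$: here $x_1$ is replaced by $x_{12}$, $x_{12}$ by $2x_2x_{12}-x_1$, $x_{13}$ by $x_{123}$ and $x_{123}$ by $2x_2x_{123}-x_{13}$ (with $x_2,x_3,x_{23}$ fixed), so that, for instance, the term $4x_1x_2x_3x_{123}$ expands into terms of degree $5$, and $U(F)=F$ holds only after a substantial cancellation of all such spurious higher-degree and mixed terms. This is nonetheless a finite deterministic check, carried out in Magma \cite{Ma} like the other computations in the paper; in writing up I would give the $Q$ case explicitly and then state that the $U,P,S$ identities follow by direct expansion. Finally, I would remark — as motivation rather than as part of the proof — that $F$ is, up to the harmless normalisation relating half-traces to traces, the Fricke character of $F_3$ isolated by Horowitz \cite{ho}, playing for $n=3$ the role that $\mathrm{trace}(a_1a_2a_1^{-1}a_2^{-1})$ plays for $n=2$, and that Horowitz's study of induced automorphisms on Fricke characters gives an alternative, computation-free derivation of its $\mathrm{Aut}(F_3)$-invariance.
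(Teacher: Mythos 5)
Your proposal is correct and takes essentially the same route as the paper: membership $F\in\mathcal X$ is a Gr\"obner-basis/ideal-membership computation (your observation that it suffices to check the cosine identity is also valid, since $\mathcal X$ is the elimination ideal $\mathcal I\cap R$), and invariance is verified on the generators $U,P,S,Q$ using the homomorphism property of Lemma~\ref{lemact}. Two small slips worth fixing: the displayed generating set of $\mathcal X$ has seven elements, not eight, and $F$ is an $R$-linear (polynomial-coefficient) combination of them rather than a $\mathbb Q$-linear one, since $x_1$ occurs only linearly in a single generator and so no constant-coefficient combination can produce the term $x_1^2$.
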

    \noindent {\it Proof} For the first statement we use a  Gr\"obner basis  for $\mathcal X$ to show that $F \in \mathcal X$.
    For the last statement we just   show that $U(F)=P(F)=S(F)=Q(F)=F$.\qed\medskip
    
    Let $V(\mathcal X)$ denote the (real)  variety corresponding to  $\mathcal X$, and let $V(F)$ be that for $F$.
 
 We note that by the results of  \cite {ho}, the polynomial $F$ generates the principal ideal of all trace relations for three matrices in $\mathrm{SL}(2,\mathbb C)$.  Thus for all
 trace septuples $p=(x_1,x_2,x_3,x_{12},x_{13},x_{23},x_{123})$ determined by a triple $A_1,A_2,A_3 \in {\rm SL}(2,\mathbb C)$ we have $F(p)=0$ i.e. $p \in V(F)$. {\it Thus in what follows we will always assume that $p \in V(F)$.}

    \medskip

We have already noted that $\mathcal X$ has dimension $3$. This next result gives a parametrization of a compact part of $V(\mathcal X)$ of dimension $3$ that corresponds to the curvilinear tetrahedron $\mathcal T$ in the $n=2$ case.  For $n \in \mathbb N$ we let $\mathbb T^n$ denote the $n$-torus $(S^1)^n$. 
\begin{lemma}\label{lemparam}
Define $\Pi_3:\mathbb R^3 \to V(\mathcal X),$ by
\begin{align*}
& (t_1,t_2,t_3) \mapsto (\cos 2\pi t_1,\cos 2\pi t_2,\cos 2\pi t_3,\cos 2\pi (t_1+t_2),\cos 2\pi (t_1+t_3),\\&\qquad \cos 2\pi (t_2+t_3),\cos 2\pi (t_1+t_2+t_3)).
\end{align*}
Then $\Pi_3$ factors through
$$\mathbb R^3 \to \mathbb T^3=(\mathbb R/\mathbb Z)^3 \to \mathbb T^3/(\pm 1) \to V(\mathcal X),$$ and has image homeomorphic to  $\mathbb T^3/(\pm 1) $. 
\end{lemma}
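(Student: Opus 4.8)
The plan is to verify the three claimed factorizations one at a time and then establish that the induced map on $\mathbb{T}^3/(\pm 1)$ is a homeomorphism onto its image. First, $\Pi_3$ factors through $\mathbb{T}^3 = (\mathbb{R}/\mathbb{Z})^3$ because every coordinate of $\Pi_3(t_1,t_2,t_3)$ is of the form $\cos 2\pi(\text{integer combination of }t_i)$, and $\cos 2\pi(\cdot)$ is $\mathbb{Z}$-periodic; so $\Pi_3$ depends only on $(t_1,t_2,t_3) \bmod \mathbb{Z}^3$. Next, $\Pi_3$ factors through $\mathbb{T}^3/(\pm 1)$ because $\cos$ is even: replacing $(t_1,t_2,t_3)$ by $(-t_1,-t_2,-t_3)$ negates every linear form inside the cosines and hence leaves every coordinate unchanged. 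Finally, the image lies in $V(\mathcal{X})$: this is exactly the computation that produced the generators of $\mathcal{X}$ in Section 2 — the ideal $\mathcal{X}$ was obtained by eliminating the $y$-variables from the ideal $\mathcal{I}$ of all polynomial relations among $\cos\theta_i, \sin\theta_i$ and their sums, so every generator of $\mathcal{X}$ vanishes on points of the form $\Pi_3(t_1,t_2,t_3)$ (with $\theta_i = 2\pi t_i$). Thus $\Pi_3$ descends to a continuous map $\overline{\Pi}_3 : \mathbb{T}^3/(\pm 1) \to V(\mathcal{X})$.

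It remains to show $\overline{\Pi}_3$ is a homeomorphism onto its image. Since $\mathbb{T}^3/(\pm 1)$ is compact and $V(\mathcal{X}) \subseteq \mathbb{R}^7$ is Hausdorff, it suffices to prove $\overline{\Pi}_3$ is injective; continuity of the inverse on the image is then automatic. So suppose $\Pi_3(s) = \Pi_3(t)$ for $s = (s_1,s_2,s_3)$, $t=(t_1,t_2,t_3) \in \mathbb{T}^3$. Equality of the first three coordinates gives $\cos 2\pi s_i = \cos 2\pi t_i$, hence $s_i = \pm t_i$ in $\mathbb{R}/\mathbb{Z}$ for each $i$, with the signs chosen independently. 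Replacing $s$ by $-s$ if necessary (which does not change $\Pi_3(s)$, by the evenness just used), we may assume $s_i = \varepsilon_i t_i$ with $\varepsilon_i \in \{\pm 1\}$ and $\varepsilon_1 = 1$ (handling the degenerate cases where some $t_i \in \{0, 1/2\}$ so that $\pm t_i$ coincide — there $\varepsilon_i$ is irrelevant). One then uses the equality of the mixed coordinates $\cos 2\pi(s_i + s_j) = \cos 2\pi(t_i + t_j)$ to force all the remaining $\varepsilon_j$ to equal $1$: for instance $\cos 2\pi(t_1 + \varepsilon_2 t_2) = \cos 2\pi(t_1 + t_2)$ gives $t_1 + \varepsilon_2 t_2 = \pm(t_1 + t_2)$; the $+$ case yields $\varepsilon_2 = 1$ (modulo the degenerate case $t_2 \in \{0,1/2\}$), while the $-$ case gives $2t_1 = -(1+\varepsilon_2)t_2$, which combined with $\cos 2\pi(t_1+t_3) = \cos 2\pi(\varepsilon_1 t_1 + \varepsilon_3 t_3)$ and $\cos 2\pi(t_1+t_2+t_3)=\cos 2\pi(t_1+\varepsilon_2 t_2+\varepsilon_3 t_3)$ again forces $s = t$ after another sign normalization. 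Carefully enumerating these sign cases shows $s = \pm t$ in $\mathbb{T}^3$, i.e. $s$ and $t$ have the same image in $\mathbb{T}^3/(\pm 1)$, which is the desired injectivity.

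The main obstacle is precisely this last injectivity argument: the bookkeeping over the $2^3$ possible sign vectors $(\varepsilon_1,\varepsilon_2,\varepsilon_3)$, together with the boundary cases where some $t_i \equiv 0$ or $t_i \equiv 1/2$ (so that a coordinate of $\Pi_3$ equals $\pm 1$ and the corresponding sign ambiguity collapses), requires care to organize cleanly. A convenient way to streamline it is to note that from $\Pi_3(s)=\Pi_3(t)$ one recovers, for each of the seven index sets $I$, the unordered pair $\{\ell_I(s), -\ell_I(s)\} = \{\ell_I(t), -\ell_I(t)\}$ in $\mathbb{R}/\mathbb{Z}$, where $\ell_I$ is the sum of the coordinates indexed by $I$; since the seven linear forms $\ell_I$ span (over $\mathbb{Z}$) a finite-index sublattice of $(\mathbb{Z}/ \cdot)$-valued functionals containing the coordinate functionals, consistency of all seven sign choices pins down $s$ up to the global sign $\pm$. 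Once injectivity is in hand, the stated chain of factorizations and the homeomorphism claim follow immediately.
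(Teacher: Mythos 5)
Your proposal is correct and follows essentially the same route as the paper, whose proof only verifies that the generators of $\mathcal X$ vanish on the image and then says ``the rest follows''; you supply exactly the omitted details (periodicity and evenness of cosine for the factorization, and compactness plus Hausdorffness reducing the homeomorphism claim to injectivity). Your sign-enumeration for injectivity does close correctly: in every non-trivial sign pattern the mixed-coordinate equations force the flipped variables (or the unflipped ones) to lie in $\{0,1/2\}$, where the flip is invisible, so one always lands back at $s\equiv\pm t$ in $\mathbb T^3$.
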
 
\noindent {\it Proof}  To check that the image of $\Pi_3$ is in $V(\mathcal X)$ we just take each element $x=x(x_1,x_2,x_3,x_{12},x_{13},x_{23},x_{123})$ of a basis for $\mathcal X$ and show that
$x(\Pi_3(t_1,t_2,t_3))=0$. This is straightforward. The rest follows.\qed.\medskip

Next we determine some   copies of  $\mathbb R^3$ inside $\mathbb R^7$.
If we choose matrices
$$A_1=\begin{pmatrix} \lambda&0\\0& {\lambda^{-1}}\end{pmatrix},\quad A_2=\begin{pmatrix} \mu&0\\0& {\mu^{-1}}\end{pmatrix},\quad 
A_3=\begin{pmatrix} 0&1\\-1&0\end{pmatrix},$$
and we let $ y_i={\rm trace}(A_i)/2,  y_{ij}={\rm trace}(A_iA_j)/2, y_{ijk}={\rm trace}(A_iA_jA_k)/2$, then 
$$ y_3= y_{13}= y_{23}= y_{123}=0.$$ We say that $( y_1,y_2,y_3,y_{12},y_{13},y_{23}, y_{123})$  {\it has zeros in positions} $3,5,6,7$.
We have:
\begin{lemma}\label{lems7}
If $\alpha \in {\rm Aut}(F_3)$ and $y_1,y_2,y_3,y_{12},y_{13},y_{23}, y_{123}$ are as defined above, then $(y_1,y_2,y_3,y_{12},y_{13},y_{23}, y_{123})\alpha$ has zeros in one of the sets
\begin{align*}
  \{ 1, 2, 3, 7 \},
     \{ 1, 2, 5, 6  \},
     \{ 1, 3, 4, 6  \},
     \{ 1, 4, 5, 7  \},
     \{ 2, 3, 4, 5  \},
     \{ 2, 4, 6, 7  \},
     \{ 3, 5, 6, 7 \}.\end{align*}
 This gives a homomorphism $\Sigma_7:{\rm Aut}(F_3) \to S_7$, where
 \begin{align*}&
 \Sigma_7(U)=(1, 5)(2, 6); \quad \Sigma_7(Q)=(2, 3, 5)(4, 7, 6);\\
 & \Sigma_7(S)=1; \quad \Sigma_7(P)=(3, 5)(4, 6).
 \end{align*}
 Lastly,
 $\langle \Sigma_7(U), \Sigma_7(Q), \Sigma_7(S),  \Sigma_7(P)\rangle \cong \mathrm{SL}(3,2)$.
    \end{lemma}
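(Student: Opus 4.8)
The plan is to recognise $\Sigma_7$ as the reduction of the abelianisation $\Phi_3$ modulo $2$, followed by a faithful permutation action of $\mathrm{GL}(3,\mathbb F_2)$ on seven objects, and then to invoke the simplicity of $\mathrm{GL}(3,\mathbb F_2)\cong\mathrm{SL}(3,2)$.

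First I would make precise that $\Sigma_7$ factors through $\Phi_3\bmod 2$. In the computation underlying the first assertion of the lemma, whether the coordinate in position $I$ of $(y)\alpha$ vanishes is controlled by a $\bmod\,2$ quantity: since $A_3^2=-I$ and $A_3DA_3^{-1}=D^{-1}$ for every diagonal $D$, any word in $A_1^{\pm1},A_2^{\pm1},A_3^{\pm1}$ equals $\pm D'A_3^{\,m}$ with $D'$ diagonal and $m$ the number of occurrences of $A_3^{\pm1}$, and its trace vanishes precisely when $m$ is odd (for generic $\lambda,\mu$), i.e.\ precisely when the $\bmod\,2$ abelianisation of the word pairs nontrivially with $e_3$. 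Identifying position $i$ with $e_i$, position $\{i,j\}$ with $e_i+e_j$ and position $\{1,2,3\}$ with $e_1+e_2+e_3$, the zero-set of $(y)\alpha$ is thus the complement of a hyperplane of $\mathbb F_2^3$ determined only by $\bar\alpha:=\Phi_3(\alpha)\bmod 2$, and the permutation $\Sigma_7(\alpha)$ — read off from how $\alpha$ moves the seven zero-patterns — is exactly the permutation that $\bar\alpha$ induces on the seven non-zero vectors of $\mathbb F_2^3$, for a suitable labelling of these by $\{1,\dots,7\}$ (consistent with the stated values; e.g.\ $\Phi_3(S)\equiv I\pmod 2$ and $\Sigma_7(S)=1$). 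Hence $\Sigma_7=\rho\circ\bar\Phi_3$ with $\bar\Phi_3=(\cdot\bmod 2)\circ\Phi_3\colon\mathrm{Aut}(F_3)\to\mathrm{GL}(3,\mathbb F_2)$ and $\rho\colon\mathrm{GL}(3,\mathbb F_2)\to S_7$ a homomorphism.

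Next I would note that $\bar\Phi_3$ is surjective: $\Phi_3\colon\mathrm{Aut}(F_3)\to\mathrm{GL}(3,\mathbb Z)$ is onto (the images of the generators $U,P,S,Q$ already generate $\mathrm{GL}(3,\mathbb Z)$), and $\mathrm{GL}(3,\mathbb Z)\to\mathrm{GL}(3,\mathbb F_2)$ is onto. Therefore $\langle\Sigma_7(U),\Sigma_7(Q),\Sigma_7(S),\Sigma_7(P)\rangle=\Sigma_7(\mathrm{Aut}(F_3))=\rho(\mathrm{GL}(3,\mathbb F_2))$. Since $\Sigma_7(U)=(1,5)(2,6)\ne 1$, $\rho$ is non-trivial, so $\ker\rho$ is a proper normal subgroup of $\mathrm{GL}(3,\mathbb F_2)$; as $\mathrm{GL}(3,\mathbb F_2)=\mathrm{SL}(3,2)$ is simple, $\rho$ is injective and its image is isomorphic to $\mathrm{SL}(3,2)$, which is the assertion.

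The step I expect to need the most care is the factorisation $\Sigma_7=\rho\circ\bar\Phi_3$: one must check that the whole permutation $\Sigma_7(\alpha)$, and not just one zero-pattern, depends only on $\bar\alpha$. This holds because $\mathrm{Aut}(F_3)$ acts transitively on the seven zero-patterns (as $\bar\Phi_3$ is onto and $\mathrm{GL}(3,\mathbb F_2)$ is transitive on non-zero vectors), so each pattern-family is $\mathrm{Aut}(F_3)$-equivalent to the distinguished one and the first assertion of the lemma then identifies each of its images. As a purely computational alternative — the route taken with Magma — one checks directly that each of $(1,5)(2,6)$, $(2,3,5)(4,7,6)$, $(3,5)(4,6)$ preserves the Fano-plane line set $\{\{2,4,6\},\{3,4,7\},\{5,6,7\},\{1,3,6\},\{2,3,5\},\{1,4,5\},\{1,2,7\}\}$ on $\{1,\dots,7\}$, so the generated group $H$ embeds in $\mathrm{Aut}(PG(2,\mathbb F_2))\cong\mathrm{SL}(3,2)$ of order $168$; since $\Sigma_7(U)\Sigma_7(Q)=(1\,2\,4\,7\,6\,3\,5)$ is a $7$-cycle while $\Sigma_7(U),\Sigma_7(Q)$ have orders $2,3$, $|H|$ is divisible by $42$, and a subgroup of index at most $4$ in the simple group $\mathrm{SL}(3,2)$ can only be the whole group.
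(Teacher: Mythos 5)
Your proposal is correct, but it reaches the lemma by a genuinely different route than the paper. The paper's proof is a direct verification: using the explicit formulas for the action of the generators $U,Q,S,P$ on $\mathbb R^7$, one checks that the seven coordinate subspaces with the listed zero patterns (the $\mathcal U_i$) are permuted --- a statement about \emph{arbitrary} points with those zero patterns, which is the form quoted later in Lemma \ref{lem34} --- and the identification of $\langle\Sigma_7(U),\Sigma_7(Q),\Sigma_7(S),\Sigma_7(P)\rangle$ with $\mathrm{SL}(3,2)$ is then left as a (Magma) calculation. You instead explain where the combinatorics comes from: since $A_3^2=-I_2$ and $A_3DA_3^{-1}=D^{-1}$ for diagonal $D$, the half-trace of a word vanishes (generically in $\lambda,\mu$) exactly when its $a_3$-exponent sum is odd, so the zero pattern of $(y)\alpha$ is the complement of a hyperplane of $\mathbb F_2^3$ depending only on the mod-$2$ abelianisation $\bar\alpha$, whence $\Sigma_7$ factors through $\mathrm{GL}(3,\mathbb F_2)$; surjectivity of $\mathrm{Aut}(F_3)\to\mathrm{GL}(3,\mathbb F_2)$ together with simplicity of $\mathrm{SL}(3,2)$ then pins down the image with no machine computation, and your Fano-plane-plus-divisibility argument is a correct hands-on substitute for the paper's calculation (I checked that your seven triples are exactly the triples of labels whose associated linear forms sum to zero, that the three nontrivial generators preserve this line set, and that $\Sigma_7(U)\Sigma_7(Q)$ is the stated $7$-cycle). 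Two minor points. First, with the labelling forced by the lemma (label $j$ corresponding to the linear form whose non-vanishing locus is the $j$th zero set), the induced action is the contragredient one, on the seven non-zero linear forms (equivalently hyperplanes) of $\mathbb F_2^3$, not on the seven non-zero vectors: the point action and the hyperplane action of $\mathrm{GL}(3,\mathbb F_2)$ are inequivalent permutation representations, so no relabelling literally realises your $\rho$ as the vector action, but since both degree-$7$ actions are faithful this has no effect on the factorisation or on the image, which is all you use. Second, your trace computation proves the zero-pattern claim for the special diagonal-plus-$A_3$ families, which is exactly what the lemma asserts; the paper's generator check yields invariance of the full subspaces $\mathcal U_i$, the slightly stronger fact used afterwards, though your mod-$2$ observation combined with the explicit action formulas recovers that as well.
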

    \noindent {\it Proof} One checks that elements $(y_1,y_2,\dots,y_{123})\in \mathbb R^7$ having zero in one of these sets are permuted
    by $\mathrm{Aut}(F_3)$. Thus we obtain the (transitive) permutation representation $\Sigma_7$. The rest is a calculation.\qed 
    \medskip

Let $K_7=\ker(\Sigma_7)$. In   \cite {MKS} we can   find a presentation for Aut$(F_3)$, and since $K_7$ has index $168$ we can use the Reidemeister-Schreier process (as implemented in \cite {Ma}, for example),  to find generators for $K_7$.
These are
 \begin{align*}&
\tag {2.1} S,
U   S   U^{-1},
U^2,
P   U   S   U^{-1}   P,
P   U^2   P,
Q   U   S   U^{-1}   Q^{-1},
(Q^{-1}   U   Q)^2,
P   Q   U   S   U^{-1}   Q^{-1}   P,\\
&
P   Q   U^2   Q^{-1}   P,
P   Q^{-1}   U^2   Q   P,
(Q^{-1}   U   Q^{-1}   U   Q   U^{-1}   Q)^2,
(U   Q^{-1}   U   Q   U   Q^{-1}   U^{-1}   Q   U^{-1})^2,\\&
(U   Q^{-1}   U   Q^{-1}   U   Q   U^{-1}   Q   U^{-1})^2,
Q^{-1}   U   P   Q   U   Q   U^{-1}   Q^{-1}   U^{-1}   Q   U^{-1}   P,\\&
P   Q   U   Q^{-1}   U   P   U   Q   U^{-1}   Q^{-1}   P   U^{-1},
P   U   P   Q   U   P   U   P   U^{-1}   Q   U^{-1}   P,\\& 
P   Q^{-1}   U   Q^{-1}   U   P   U   Q   U^{-1}   Q^{-1}   P   U^{-1}   Q,
(Q   U   Q^{-1}   U   Q^{-1}   U   Q   U^{-1}   Q   U^{-1}   Q^{-1})^2.
 \end{align*}
 There are $18$ generators in this list of generators and $K_7/K_7' \cong \mathcal C_2^{18}$, so that we cannot generate $K_7$ by less than $18$ elements.

For generic $x,y,z \in \mathbb R$ define
\begin{align*}&
u_1=[x,y,0,z,0,0,0];
&u_2=[x,0,y,0,z,0,0];\\
&u_3=[0,x,y,0,0,z,0];
&u_4=[x,0,0,0,0,y,z];\\
&u_5=[0,x,0,0,y,0,z];
&u_6=[0,0,x,y,0,0,z];\\
&u_7=[0,0,0,x,y,z,0].
\end{align*}
Let $\mathcal U_i\subset \mathbb R^7, 1\le i\le 7,$ denote the $3$-dimensional subset determined by $u_i$, so that for example
$\mathcal U_1=\{(x,y,0,z,0,0,0) \in \mathbb R^7, x,y,z \in \mathbb R\}$.  From Lemma \ref {lems7} we see that the action of ${\rm Aut}(F_3)$ permutes the $\mathcal U_i$.

One also sees that $F(u_i), 1\le i\le 6$, is equal to $E(x,y,z)-1$, while $F(u_7)=E(x,y,-z)-1$. This shows that each $V(F) \cap \mathcal U_i=\partial \mathcal T_i, 1\le i \le 7,$ is a copy of   $\partial \mathcal T$. From Lemma \ref {lems7}, and the fact that $\mathcal U_i\cap V(F)=\partial \mathcal T_i$,
we see that the action of ${\rm Aut}(F_3)$ permutes the $\mathcal T_i$ with the action given by
Lemma \ref {lems7}.

Let $\Gamma_\mathcal U$ be the graph whose vertices are $\mathcal U_1,\dots,\mathcal U_7$, and where we have an edge $\mathcal U_i,\mathcal U_j$ exactly when $\dim (\mathcal U_i \cap \mathcal U_j)=1$.

\begin{lemma}\label {lem34}
The graph $\Gamma_\mathcal U$ is the Fano plane with seven points and seven lines as depicted in Figure 3 which is drawn on $S^2$ (each line is represented by a triangle). The group 
$\mathrm{Aut}(F_3)$ acts transitively on  $\mathcal U_1,\dots,\mathcal U_7$, giving an epimorphism $\mathrm{Aut}(F_3) \to {\rm SL}(3,2)$.

  \begin{figure}[hh] \label{cols}
\scalebox{.7}
{\includegraphics{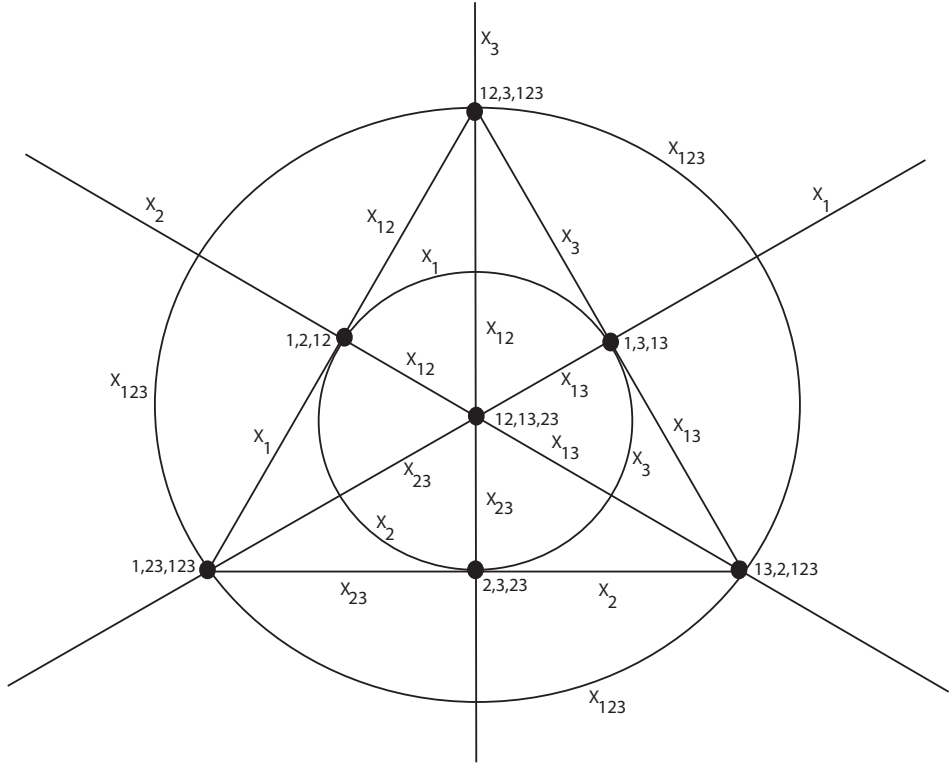}}
  \caption{The Fano plane.}
 \end{figure}

\end{lemma}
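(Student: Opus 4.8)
The plan is to prove Lemma \ref{lem34} by combining the combinatorial data already extracted in Lemma \ref{lems7} with a direct computation of the pairwise intersection dimensions of the $\mathcal U_i$. First I would observe that the seven zero-sets
$$Z_1=\{1,2,3,7\},\ Z_2=\{1,2,5,6\},\ Z_3=\{1,3,4,6\},\ Z_4=\{1,4,5,7\},\ Z_5=\{2,3,4,5\},\ Z_6=\{2,4,6,7\},\ Z_7=\{3,5,6,7\}$$
of Lemma \ref{lems7} are exactly the complements of the seven sets of \emph{nonzero} positions defining $u_1,\dots,u_7$; indeed the support of $u_i$ is the complement of $Z_i$ in $\{1,\dots,7\}$. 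Hence $\mathcal U_i$ is the coordinate subspace of $\mathbb R^7$ cut out by $\{x_j=0: j\in Z_i\}$, so $\mathcal U_i\cap\mathcal U_j$ is the coordinate subspace cut out by $\{x_k=0: k\in Z_i\cup Z_j\}$, and therefore $\dim(\mathcal U_i\cap\mathcal U_j)=7-|Z_i\cup Z_j|$. Since each $|Z_i|=4$, we get $\dim(\mathcal U_i\cap\mathcal U_j)=1$ iff $|Z_i\cup Z_j|=6$ iff $|Z_i\cap Z_j|=2$. So the edge relation of $\Gamma_{\mathcal U}$ is: $i\sim j$ iff the two $4$-element blocks meet in exactly two points.

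Next I would identify this incidence structure as the Fano plane. The cleanest route: the seven sets $Z_i$ are the complements of the seven lines of $\mathrm{PG}(2,2)$, equivalently they are themselves the \emph{hyperplane complements} — but more directly, I would just note that $\{1,\dots,7\}$ together with the seven $3$-element supports $\{1,\dots,7\}\setminus Z_i$ forms a $2$-$(7,3,1)$ design (each pair of points lies in a unique triple), which is the defining property of the Fano plane; this is a finite check on the seven triples $\{4,5,6\},\{3,4,7\},\{2,5,7\},\{2,3,6\},\{1,6,7\},\{1,3,5\},\{1,2,4\}$ (the complements of the $Z_i$). Then two points $i,j$ of $\Gamma_{\mathcal U}$ (thought of as lines of the Fano plane via $Z_i\leftrightarrow$ support) are collinear in the dual sense exactly when the corresponding Fano lines share a point, and for the Fano plane any two distinct lines meet in exactly one point, which translates back into $|\,\mathrm{supp}(u_i)\cap\mathrm{supp}(u_j)\,|=1$, i.e. $|Z_i\cup Z_j|=6$, i.e. the edge condition above. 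So $\Gamma_{\mathcal U}$ is the (self-dual) incidence graph realizing the Fano plane, matching Figure 3; I would remark that each "line" of Figure 3, drawn as a triangle on $S^2$, is one of the seven triples listed above.

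For the group-theoretic assertion, the homomorphism $\Sigma_7:\mathrm{Aut}(F_3)\to S_7$ of Lemma \ref{lems7} already has image $\langle\Sigma_7(U),\Sigma_7(Q),\Sigma_7(S),\Sigma_7(P)\rangle\cong\mathrm{SL}(3,2)$, and by construction this image preserves the design structure (it permutes the sets $Z_i$, hence permutes the blocks and the incidences), so it lands in $\mathrm{Aut}(\text{Fano})\cong\mathrm{PGL}(3,2)=\mathrm{SL}(3,2)$; since it already \emph{is} a copy of $\mathrm{SL}(3,2)$ inside that automorphism group of the same order $168$, it is the full automorphism group, and in particular acts transitively on the seven points (vertices) $\mathcal U_1,\dots,\mathcal U_7$. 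The epimorphism $\mathrm{Aut}(F_3)\to\mathrm{SL}(3,2)$ is then just $\Sigma_7$ with its codomain restricted to its image. I do not expect any serious obstacle here: the whole proof is bookkeeping on seven four-element subsets plus quoting Lemma \ref{lems7}; the only mildly delicate point is being careful about the point/line duality — whether one views the $\mathcal U_i$ as the "points" or the "lines" of the Fano plane — but since the Fano plane is self-dual this is cosmetic, and I would simply fix the convention that the vertices $\mathcal U_i$ of $\Gamma_{\mathcal U}$ are the seven points and two are joined when the supports meet in a single coordinate, then verify the $2$-$(7,3,1)$ design axioms on the explicit triples.
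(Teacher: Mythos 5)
Your proposal is correct and takes essentially the same route as the paper: the edge relation in $\Gamma_{\mathcal U}$ is a shared nonzero coordinate (equivalently $\dim(\mathcal U_i\cap\mathcal U_j)=1$, via your formula $7-|Z_i\cup Z_j|$), the seven supports are the lines of a Fano plane, and the transitivity together with the epimorphism onto $\mathrm{SL}(3,2)$ is exactly the content of Lemma \ref{lems7}; the paper merely records this by inspection of Figure 3, so your write-up just fills in the combinatorial bookkeeping it leaves as ``clear''. One cosmetic slip: with your labelling, the sets $Z_1,\dots,Z_7$ taken in the order they are listed in Lemma \ref{lems7} are a permutation of the actual zero-sets of $u_1,\dots,u_7$ (for instance $u_1$ has zeros in positions $\{3,5,6,7\}$, not $\{1,2,3,7\}$), but since your argument only uses the unordered family of these sets this does not affect anything.
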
 
\noindent {\it Proof} 
In Figure 3 we have drawn the graph $\Gamma_\mathcal U$, indicating each vertex $\mathcal U_i$ by the corresponding non-zero coordinates. For example, vertices labeled $12,3,123$ (corresponding to $\mathcal U_6$)  and $1,2,12$ (corresponding to $\mathcal U_1$)  are drawn with an edge between them since they share the coordinate $12$, and $\mathcal U_6 \cap \mathcal U_1$ is the $ x_{12}$-axis. The rest is clear.
\qed \medskip

We have now developed all the ideas necessary for us to be able to state our main result in the case $n=3$:

\begin {theorem} \label {mainthm}
If $p \in \mathcal P_3 \cap V(F)$, then we have one of the following:

\noindent (i) $p$ is on a coordinate axis;

\noindent (ii) there is some $1\le i\le 7$ such that $p \in \partial \mathcal T_i$;

\noindent (iii) $p \in {\rm Image}(\Pi_3)$;

\noindent (iv) the associated  group $\langle A_1, A_2, A_3\rangle$ is finite.

In particular, $\mathcal F_3=\mathcal P_3$.
\end{theorem}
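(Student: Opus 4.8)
The plan is to reduce the $n=3$ classification to the already-established $n=2$ picture (Theorem~\ref{th1}) by exploiting the rich combinatorial structure developed in the preliminaries: the seven embedded tetrahedra $\partial\mathcal T_i$, the torus quotient $\mathrm{Image}(\Pi_3)\cong\mathbb T^3/(\pm1)$, the $\mathrm{Aut}(F_3)$-invariant Horowitz polynomial $F$, and the ideal $\mathcal X$. Throughout we fix $p\in\mathcal P_3\cap V(F)$ with associated triple $(A_1,A_2,A_3)\in\mathrm{SL}(2,\mathbb C)$, determined up to conjugacy. The first step is to translate finiteness of the orbit into a statement about the matrix group: I would show that if the $\langle\sigma\rangle$-orbit of $p$ is finite for every $\sigma\in\mathrm{Aut}(F_3)$, then in particular for the subgroups generated by $U,Q,S,P$ and their conjugates one gets constraints forcing each of $x_1,x_2,x_3$ (and then, using $Q$ and $P$, each of $x_{12},x_{13},x_{23},x_{123}$) to be of the form $\cos(2\pi\,p/q)$. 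This is the analogue of the $n=2$ observation that $\sigma_1$ acts as a rotation by $2\pi\rho_y$ on each affine slice $y=\cos 2\pi\rho_y$: the matrices $U,S$ restricted to the appropriate affine slices of $\mathbb R^7$ are conjugate to rotations whose angles are determined by the ``frozen'' coordinates, so periodicity forces those angles to be rational multiples of $\pi$, hence the coordinates are cosines of rational angles and in particular algebraic of bounded height.

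The second step is a case split according to how many of the seven coordinates of $p$ vanish, organised by the Fano-plane structure of Lemma~\ref{lem34}. If $p$ lies on a coordinate axis we are in case~(i); if $p$ has exactly the zero-pattern of one of the $\mathcal U_i$ (equivalently $p\in\mathcal U_i\cap V(F)=\partial\mathcal T_i$) we are in case~(ii) and can import Theorem~\ref{th1} directly, since the $\mathrm{Aut}(F_3)$-action restricted to the setwise stabiliser of $\mathcal U_i$ contains (a finite-index overgroup of) the relevant $\mathrm{Aut}(F_2)$-action on $\partial\mathcal T$. The substantive case is when $p$ has no ``accidental'' zeros, i.e.\ $p$ is not on any $\partial\mathcal T_i$ and not on an axis. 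Here I would argue that, since each coordinate of $p$ equals $\cos(2\pi\,p/q)$ with bounded denominator along the whole (finite) orbit, the orbit of $p$ is contained in a set of septuples with algebraic entries lying in a fixed cyclotomic field; one then analyses the associated representation $\langle A_1,A_2,A_3\rangle\to\mathrm{SL}(2,\mathbb C)$. Two sub-cases emerge: either the three generators can be simultaneously conjugated into a ``toral'' form (diagonal, or diagonal together with the Weyl element, as in the matrices defining $\mathcal U_i$), in which case the commuting/near-commuting structure forces $p\in\mathrm{Image}(\Pi_3)$ — this is case~(iii), the dense rational points in the embedded $\mathbb T^3/(\pm1)$; or the group is irreducible and, being generated by elements of finite order with the orbit-finiteness giving a bound, must be a finite subgroup of $\mathrm{SL}(2,\mathbb C)$, landing us in case~(iv) via the Humphries classification $BI_{120}, BO_{48}, BT_{24}$ and their subgroups.

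The final statement $\mathcal F_3=\mathcal P_3$ then follows because in each of the four cases one checks directly that the full $\mathrm{Aut}(F_3)$-orbit is finite: for (i) the axes are permuted among finitely many coordinate axes and within an axis the action is by $\cos(2\pi\,p/q)\mapsto\cos(2\pi k p/q)$ through a finite quotient; for (ii) this is exactly the $n=2$ statement that $\partial\mathcal T$-points with rational parameters have finite orbit, combined with the finite $\mathrm{SL}(3,2)$-action permuting the seven $\partial\mathcal T_i$; for (iii) the action on $\mathbb T^3/(\pm1)$ is through $\mathrm{GL}(3,\mathbb Z)$ (by the analogue of~(\ref{eq1.1}) for $\Pi_3$), which acts on the $m$-torsion points $\{(r/m,s/m,t/m)\}$ through the finite group $\mathrm{GL}(3,\mathbb Z_m)$; and for (iv) the finitely many conjugacy classes of triples generating a fixed finite group give finitely many septuples, visibly permuted by $\mathrm{Aut}(F_3)$. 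I expect the main obstacle to be the irreducible sub-case of step three: showing that orbit-finiteness, rather than merely $p\in V(F)$ with cyclotomic coordinates, actually forces $\langle A_1,A_2,A_3\rangle$ to be \emph{finite} — one must rule out infinite irreducible groups all of whose coordinate-traces are cosines of rational angles (e.g.\ avoiding accidental infinite discrete or dense subgroups), which will require a careful argument combining the rotation-number constraints on all the embedded $\mathbb R^3$'s $\mathcal U_i$ simultaneously with the bounded-denominator bound, presumably via the detailed computations of $\S\S3$--$7$.
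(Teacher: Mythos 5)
Your outline runs parallel to the paper in the easy directions (each coordinate of each orbit point is a cosine of a rational angle; the axis/$\mathcal U_i$ case split via the Fano structure; the converse checks giving $\mathcal F_3=\mathcal P_3$), but the heart of Theorem~\ref{mainthm} is exactly the step you defer in your last sentence, and deferring it is a genuine gap rather than a routine verification. Knowing that all seven coordinates of every point of the finite orbit are of the form $\cos(\pi k/m)$, with bounded denominators and entries in a fixed cyclotomic field, does not force your dichotomy ``simultaneously triangularizable (toral) or finite'': for example the triple of values $x_1=0$, $x_2=1/2$, $x_{12}=\cos(\pi/7)$ consists of cosines of rational angles, yet the associated pair generates an infinite (hyperbolic triangle--type) subgroup of $\mathrm{SL}(2,\mathbb R)$; such configurations are eliminated only by exploiting orbit--finiteness through a quantitative mechanism, not through the arithmetic shape of the coordinates. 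The paper's mechanism, which your proposal does not supply, is the Gram-matrix/reflection-group machinery of \S\S 6--7: the $4\times4$ matrix $g_3=\bigl(\mathrm{trace}(A_iA_j^{-1})/2\bigr)$, the identity $\det(g_3)+F=G^2$ with $G=2x_1x_2x_3+x_{123}-x_3x_{12}-x_1x_{23}-x_2x_{13}$ (Lemma~\ref{lempd}), which makes $g_3$ positive definite at $p$ unless $G$ vanishes (Proposition~\ref{propg3posdef}); the reflections $R_1,\dots,R_4$ preserving $g_3$ together with the Galois direct-sum (crystallographic) argument forcing $\langle R_1,\dots,R_4\rangle$ to be a \emph{finite} reflection group; and Coxeter's rank-$4$ classification ($A_4,B_4,D_4,F_4,H_4$ plus the reducible types), which cuts the problem down to finitely many explicitly checked configurations, the surviving ones being absorbed by Lemma~\ref{l27.5} (an entry equal to $\pm1$) or forced into some $\mathcal U_i$ (the reducible $I_2(k)$ cases). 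Nothing in your step three substitutes for this chain; ``orbit-finiteness giving a bound'' is the conclusion to be proved, not an argument.

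A second, related omission concerns the degenerate locus. When $G((p)\alpha)=0$ for \emph{every} $\alpha\in\mathrm{Aut}(F_3)$, positive definiteness fails along the whole orbit and the reflection-group route is unavailable; the paper disposes of this case by computing that the smallest $\mathrm{Aut}(F_3)$-invariant ideal containing $F$ and $G$ is $\mathcal X$, so that $p\in V(\mathcal X)\cap[-1,1]^7$, and then Lemma~\ref{l2.133} places $p$ in $\mathrm{Image}(\Pi_3)$. Your ``toral form'' alternative gestures at the same conclusion but gives no criterion for deciding when the triple is simultaneously triangularizable, nor any argument that failure of that criterion, combined only with cyclotomic coordinates, yields finiteness of $\langle A_1,A_2,A_3\rangle$. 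So as written the proposal establishes cases (i)--(ii) and the final equality, but not the dichotomy between (iii) and (iv), which is the substance of the theorem.
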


We will say more about (ii) and (iii) below.

\section{Periodic points on $E_1$ and associated matrix groups
for points not on $E_1$}

  We first characterise those  points $p \in \mathcal
P_2$ which are on $E_1$, noting that the set $E_1$ contains
$\partial\mathcal T$.  

\begin{lemma}\label{l2.1} (i) A point $p \in
\partial \mathcal T$ is in $\mathcal P_2$ if and
only if there is $(\theta_1,\theta_2)^T \in \mathbb{ Q}^2/\mathbb{ Z}^2$
such that $\Pi_2(\theta_1,\theta_2)^T=p.$

(ii) Let $p \in E_1$.  Then $p \in \mathcal P_2$  if and
only
 $p \in \partial \mathcal T \cap \mathcal P_2$.
\end{lemma}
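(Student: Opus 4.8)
\emph{Part (i).} The plan is to exploit the conjugacy relation \eqref{eq1.1}, namely $(\Pi_2\theta)\alpha = \Pi_2(\Phi_2(\alpha)(\theta))$, which intertwines the $\mathrm{Aut}(F_2)$-action on $\partial\mathcal T$ with the $\mathrm{GL}(2,\mathbb Z)$-action on $\mathbb T^2$ via the homomorphism $\Phi_2$. For the ``if'' direction, suppose $p = \Pi_2(\theta)$ with $\theta \in \mathbb Q^2/\mathbb Z^2$; write $\theta = (r/m, s/m)^T$. Since $\mathrm{GL}(2,\mathbb Z)$ preserves the finite lattice $\frac1m\mathbb Z^2/\mathbb Z^2$, the $\mathrm{GL}(2,\mathbb Z)$-orbit of $\theta$ is finite, and applying $\Pi_2$ shows that for every $\alpha\in\mathrm{Aut}(F_2)$, the point $p\alpha$ lies in the finite set $\Pi_2(\frac1m\mathbb Z^2/\mathbb Z^2)$. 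In particular each $\alpha$ permutes this finite set, so $p\in\mathcal P_2$ (indeed $p\in\mathcal F_2$). For the ``only if'' direction I would argue contrapositively: if $\theta$ has an irrational coordinate (or more precisely is not a torsion point of $\mathbb T^2$), I must produce a single $\sigma\in\mathrm{Aut}(F_2)$ for which $p$ is not periodic. Take $\sigma$ with $\Phi_2(\sigma)$ hyperbolic, e.g. $\sigma = \sigma_1\sigma_2^{-1}$, so that $\Phi_2(\sigma)$ is a hyperbolic element of $\mathrm{SL}(2,\mathbb Z)$. A point $\theta\in\mathbb T^2$ is periodic under a hyperbolic $M\in\mathrm{SL}(2,\mathbb Z)$ if and only if $\theta$ is a rational point: periodicity means $(M^N - I)\theta \equiv 0 \pmod{\mathbb Z^2}$ for some $N$, and $\det(M^N-I)\neq 0$ (as $1$ is not an eigenvalue of $M^N$ for $M$ hyperbolic), whence $\theta \in (M^N-I)^{-1}\mathbb Z^2 \subseteq \mathbb Q^2$. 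Then I must pass this back through $\Pi_2$: since $\Pi_2$ is a finite-to-one (degree-two) map, $p = \Pi_2(\theta)$ being periodic under $\sigma$ forces $\theta$ into a finite union of $\Pi_2$-fibres, each finite, hence $\{\Phi_2(\sigma)^N\theta : N\ge 0\}$ is finite, i.e. $\theta$ is periodic under $\Phi_2(\sigma)$, hence rational by the above. This is the step requiring the most care — matching up periodicity upstairs and downstairs across the branched cover — but it is routine because the cover has finite degree.

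\emph{Part (ii).} One direction is immediate: $\partial\mathcal T\subseteq E_1$, so $\partial\mathcal T\cap\mathcal P_2\subseteq E_1\cap\mathcal P_2$. For the converse, suppose $p\in E_1\cap\mathcal P_2$ but $p\notin\partial\mathcal T$. Recall from the discussion of $E_1$ that $E_1\setminus V$ has exactly five components: the (closed) $2$-sphere bounding $\mathcal T$, and four ``cones'' $C_1,\dots,C_4$, each distinguished by the singular point $v_i\in V$ it contains. I would show that a point in the interior of a cone cannot be periodic. The $\mathrm{Aut}(F_2)$-action permutes the components of $E_1\setminus V$, fixing the compact one, hence permutes the four cones, so some power of any $\sigma$ maps each cone to itself; it therefore suffices to find $\sigma$ and a cone-point $p$ with $\{p\sigma^N\}$ infinite. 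On a cone, one of the coordinate functions (the one near $\pm 1$) is bounded away from the values achieved on $\partial\mathcal T$; using the rotation-number picture from the end of \S\ref{sec1} — $\sigma_1$ acts on the slice $y = \cos 2\pi\rho_y$ as a topological rotation by angle $2\pi\rho_y$ when $|y|\le 1$, and for $|y|>1$ the matrix $\left(\begin{smallmatrix}0&1\\-1&2y\end{smallmatrix}\right)$ is hyperbolic (eigenvalues $y\pm\sqrt{y^2-1}$ real, distinct, not roots of unity) — I conclude $\sigma_1$ acts on that slice with no finite orbits except the fixed point(s). Choosing $p$ in the interior of a cone so that $\sigma_1$ (or a conjugate) restricts to such a hyperbolic slice and $p$ is not the fixed point gives an infinite $\sigma_1$-orbit, contradicting $p\in\mathcal P_2$.

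The main obstacle is the bookkeeping across the branched double cover $\Pi_2$ in part (i): translating ``$p$ periodic under $\sigma$ on $\partial\mathcal T$'' into ``$\theta$ periodic under $\Phi_2(\sigma)$ on $\mathbb T^2$'' and back. One must be careful that a periodic point downstairs may lift to points that are merely pre-periodic or that get swapped by the deck transformation $\theta\mapsto-\theta$; but since the fibres are finite and the deck group is finite, finiteness of the orbit upstairs is equivalent to finiteness downstairs, and the argument closes. Part (ii)'s only subtlety is justifying that it suffices to analyze a \emph{single} well-chosen $\sigma$, which follows because membership in $\mathcal P_2$ requires periodicity under \emph{every} element of $\mathrm{Aut}(F_2)$.
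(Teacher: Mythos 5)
Your proposal is correct in substance, but it reaches the two harder implications by a somewhat different route than the paper. For the ``only if'' half of (i), the paper does not use a hyperbolic element at all: it takes the parabolic $\sigma_2$ and computes directly that $\Phi_2(\sigma_2^k)(\theta_1,\theta_2)^T=(\theta_1,\theta_2-k\theta_1)^T$, which is never congruent to $(\theta_1,\theta_2)^T$ mod $\mathbb Z^2$ when $\theta_1\notin\mathbb Q$, so the orbit upstairs (hence downstairs, the cover being finite-to-one) is infinite. Your argument via a hyperbolic $\Phi_2(\sigma)$, the fact that $\det(M^N-I)\neq 0$ forces periodic points of a hyperbolic toral automorphism to be rational, and the transfer of finiteness across the degree-two branched cover is equally valid; it is a little heavier, but it makes explicit the upstairs/downstairs bookkeeping that the paper leaves implicit, and it does not need the case split on which coordinate of $\theta$ is irrational. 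The ``if'' half is the same congruence/lattice argument in both. For (ii), the paper reduces (by transitivity of $\mathrm{Aut}(F_2)$ on the cones) to the cone at $(1,1,1)^T$, parameterises it by $\Pi'(t_1,t_2)^T=(\cosh t_1,\cosh t_2,\cosh(t_1+t_2))^T$, and transports the action to the linear $\mathrm{GL}(2,\mathbb Z)$-action on $\mathbb R^2$, where $\Phi_2(\sigma_1)$ or $\Phi_2(\sigma_2)$ visibly has infinite orbit on any $(t_1,t_2)^T\neq(0,0)^T$. You instead stay in $\mathbb R^3$ and use the slice maps: for $|y|>1$ the slice matrix has trace $2y$, is hyperbolic, and its only periodic point $(0,y,0)^T$ is off the cone. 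Both arguments work and are of comparable length.

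One point in your (ii) needs tightening. The quantifiers are inverted in ``choosing $p$ in the interior of a cone so that $\sigma_1$ \dots restricts to a hyperbolic slice'': $p$ is given, and you must choose the generator to suit it. On a cone there genuinely are points with $|y|=1$, e.g.\ $(x,1,x)^T$ with $x>1$, and there the slice map for $\sigma_1$ is parabolic and \emph{fixes} $p$, so $\sigma_1$ detects nothing; you must then use the other coordinate (here $|x|>1$ and $\sigma_2$, whose slice matrix has trace $2x$), noting that off $V$ at least one of $|x|,|y|$ exceeds $1$ on each cone. Your parenthetical ``(or a conjugate)'' gestures at this, but the case should be stated, since otherwise the claimed contradiction fails exactly on those edge points. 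Also, the remark that some power of each $\sigma$ preserves each cone is not needed for your argument and can be dropped.
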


\begin{proof} (i)
First suppose that $\Pi_2(\theta_1,\theta_2)^T=p,$ where
$(\theta_1,\theta_2)^T \in \mathbb{ Q}^2/\mathbb{ Z}^2$.  This means that we
can write $\theta_1=s/n, \theta_2=t/n$ where $s,t,n \in \mathbb{ Z}, n
>0$.  Now given $\sigma \in {\rm Aut}(F_2)$ there is $u \in
\mathbb{ N}$ such that 
$\Phi_2(\sigma^u) \equiv I_2 \mod n$.  This means that the action of
$\Phi_2(\sigma^u)$ on $\mathbb{ T}^2$ fixes $(\theta_1,\theta_2)^T$.
Since the action of $\sigma$ on $\partial \mathcal T$ is determined by
the action of $\Phi_2(\sigma)$ on $\mathbb{ T}^2$ it follows that
$\sigma^u$ fixes $\Pi_2(\theta_1,\theta_2)^T=p$, as
required.

For the converse of (i) suppose that $(\theta_1,\theta_2)^T \in
\mathbb{ T}^2 \setminus (\mathbb{ Q}^2/\mathbb{ Z}^2)$; 
in fact we may assume that
$\theta_1 \notin \mathbb Q$ (the other case being similar).  Then from
the action of $\Phi_2(\sigma_2)$ and (\ref{eq1.1}) above we have
$\Phi_2(\sigma_2^k)(\theta_1,\theta_2)^T=(\theta_1,\theta_2-k\theta_1)^T$
and so $\Phi_2(\sigma_2^k)(\theta_1,\theta_2)^T \not \equiv
(\theta_1,\theta_2)^T \mod \mathbb{ Z}^2$ for all $k  \in \mathbb N$; this
shows that $(\theta_1,\theta_2)^T$ has an infinite orbit under the action of $\langle \sigma_2\rangle$ 
and concludes the
proof of (i).

(ii)  We now consider those points $p \in E_1 \cap  \mathcal P_2$ not on
$\partial \mathcal T$.  Any such point is on one of the cones which
meet $\partial \mathcal T$ at a point of $V$.  Since ${\rm Aut}(F_2)$ acts
transitively on these cones we may assume that $p$ is on the cone
$\mathcal C$ which meets $(1,1,1)^T \in V$.  It is easy to check that
this cone can be parameterised by the function
$$\Pi': \mathbb{ R}^2 \to \mathcal{ C};\quad \Pi'(t_1,t_2)^T=(\cosh
t_1,\cosh t_2,\cosh (t_1+t_2))^T.$$ The function $\Pi'$ is a $2$ to
$1$ cover branched at $(0,0)^T$, the point corresponding to
$(1,1,1)^T \in V$, and satisfies 
$\Pi'((t_1,t_2)^T)=\Pi'((-t_1,-t_2)^T)$.  As in (\ref{eq1.1}) the action of
${\rm Aut}(F_2)$ on $\mathcal C$ is induced from an action of
${\rm GL}(2,\mathbb{Z})$ on $ \mathbb{R}^2$: 
$$(\Pi'(t_1,t_2)^T)\alpha=\Pi'(\Phi_2(\alpha)(t_1,t_2)^T).$$
Thus the point $p \in \mathcal{C} \setminus V$ would correspond to
$(t_1,t_2)^T \in \mathbb{ R}^2 \setminus \{(0,0)^T\}$ and it is now
easy to see that either $\Phi_2(\sigma_1)$ or $\Phi_2(\sigma_2)$ has
infinite orbit on $(t_1,t_2)^T$, so $p$ has infinite orbit under
either  $\langle \sigma_1\rangle $ or $\langle \sigma_2\rangle$.  This proves (ii).
\end{proof}

Given $p=(x,y,z)^T \in \mathbb{ R}^3\setminus E_1,$ with associated matrices $A_1, A_2$,
defined up to conjugacy by $p$, we define the homomorphism
$$\mu=\mu(p):F_2 \to \langle A_1,A_2\rangle,\quad \mu(a_1)=A_1,\,\,\, \mu(a_2)=A_2.$$
 Given $\sigma \in {\rm Aut}(F_2)$ we obtain matrices $C=\mu(\sigma(a_1)),
D=\mu(\sigma(a_2))$.  We will denote $\mu(\sigma(a_1))$ by
$\sigma(A_1)$ and $\mu(\sigma(a_2))$ by $\sigma(A_2)$.  Then  $\langle A_1,A_2\rangle= 
\langle\sigma(A_1),\sigma(A_2)\rangle$.

The following shows that if $p \in \mathcal P_2$, then  $A_1, A_2$
have finite order under certain simple conditions.  

\begin{lemma}\label{l2.2} Let $p \in \mathcal{P}_2, p \notin
E_1,$ and also assume that $p$ does not lie on a coordinate axis.
Let $A_1=A_1(p), A_2=A_2(p)$ be the associated matrices.  Then $A_2$ has finite order, and for
any $\sigma \in {\rm Aut }(F_2)$ the matrix $\sigma(A_2)$ has finite
order.
\end{lemma}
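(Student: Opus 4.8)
The plan is to exploit the fact that $p\in\mathcal P_2$ means $p$ is periodic under each individual element of $\mathrm{Aut}(F_2)$, applied to carefully chosen automorphisms. First I would consider the automorphism $\sigma_1$, whose action on $(x,y,z)^T$ is $(z,y,2yz-x)$ and which fixes the middle coordinate $y$. Since $p$ is periodic for $\sigma_1$, there is $N$ with $p\sigma_1^N=p$. As explained in the discussion preceding the lemma, the restriction of $\sigma_1$ to the plane $\{y=\cos 2\pi\rho_y\}$ (when $|y|\le 1$) is topologically conjugate to a rotation by angle $2\pi\rho_y$, because the companion matrix $\left(\begin{smallmatrix}0&1\\-1&2y\end{smallmatrix}\right)$ is similar to the rotation matrix through $2\pi\rho_y$; and if $|y|>1$ the same companion matrix is hyperbolic (or parabolic), so the orbit of $p$ on the corresponding ellipse/branch is infinite unless $p$ is itself a fixed point. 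Hence periodicity of $p$ under $\sigma_1$ forces either $|y|\le 1$ with $\rho_y\in\mathbb Q$ (so $y=\cos 2\pi(r/q)$), or $p$ lies at a degenerate point of that conic. Since $A_2=A_2(p)$ has $\mathrm{trace}(A_2)=2y=2\cos 2\pi(r/q)$, its eigenvalues are $e^{\pm 2\pi i r/q}$, roots of unity.

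Having eigenvalues that are roots of unity is not by itself enough to conclude $A_2$ has finite order — one must rule out $A_2$ being a nontrivial unipotent-times-root-of-unity, i.e. conjugate to $\zeta\left(\begin{smallmatrix}1&*\\0&1\end{smallmatrix}\right)$ with $*\neq 0$. This is where I expect the main obstacle to lie, and I would handle it using the hypotheses that $p\notin E_1$ and $p$ is not on a coordinate axis. The point is that $p\notin E_1$ means (by the explicit formula \eqref{eq1.2}, whose $(1,2)$-entry is nonzero exactly off $E_1$) that $A_1,A_2$ are not simultaneously triangularizable; combined with the root-of-unity eigenvalue condition on $A_2$, if $A_2$ were parabolic-up-to-scalar one could derive a contradiction, for instance by finding a word $w\in\langle A_1,A_2\rangle$ or an automorphism $\tau$ for which $p$ visibly fails to be periodic (a parabolic generator produces an unbounded orbit under the corresponding shear, analogous to the $\sigma_2^k$ computation in Lemma \ref{l2.1}). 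Concretely: since $y\neq\pm 1$ (not on the $y$-range extremes — guaranteed because $p$ is off the axes, one checks the boundary cases separately), the eigenvalues of $A_2$ are distinct, so $A_2$ is actually diagonalizable, hence of finite order.

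For the final clause about $\sigma(A_2)$ for arbitrary $\sigma\in\mathrm{Aut}(F_2)$: the matrices $\sigma(A_1),\sigma(A_2)$ are the associated matrices of the point $p\sigma\in\mathbb R^3$ (up to conjugacy), because the $\mathrm{Aut}(F_2)$-action on traces is by definition the trace-map action induced from the substitution $a_i\mapsto\sigma(a_i)$. Now $p\sigma$ again lies in $\mathcal P_2$ (since $\mathcal P_2$ is $\mathrm{Aut}(F_2)$-invariant — a point periodic under every element stays so after applying one element), and $p\sigma\notin E_1$ since $E_1$ is invariant and $p\notin E_1$. The only thing to check is that $p\sigma$ does not lie on a coordinate axis; but if it did, one argues directly — a point on an axis has one generator central (a $\pm$ root-of-unity scalar) and the relevant trace-map restriction is again a rotation, so $\sigma(A_2)$ still has finite order, the conclusion we want. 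So in every case the first part of the lemma (or its easy axis-variant) applies to $p\sigma$ and yields that $\sigma(A_2)=A_2(p\sigma)$ has finite order.
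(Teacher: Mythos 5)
Your overall route is the same as the paper's (periodicity under $\sigma_1$ forces $\mathrm{trace}(A_2)=2\cos 2\pi q$ with $q\in\mathbb Q$, and the remaining problem is the non-semisimple case), but at exactly the crucial point there is a genuine gap. You write that $y\neq\pm1$ is ``guaranteed because $p$ is off the axes''; this is false. Points such as $(0.3,\,1,\,0.2)$ lie off every coordinate axis and off $E_1$ and have $y=1$, so being off the axes does not exclude eigenvalue $\pm1$ for $A_2$. That $y=\pm1$ cannot occur for $p\in\mathcal P_2\setminus E_1$ off the axes is not an a priori consequence of the hypotheses: it is the hard case of the lemma, and your parenthetical ``one checks the boundary cases separately'' together with ``one could derive a contradiction, for instance by finding a word\dots'' defers precisely the step that must be carried out. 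The paper's proof does it explicitly: if the eigenvalue is $\pm1$, take $A_2=\begin{pmatrix}a&0\\b&a\end{pmatrix}$ with $a=\pm1$; if $b\neq0$ and $A_1=\begin{pmatrix}u&v\\r&s\end{pmatrix}$, then $\mathrm{trace}(A_1A_2^m)=a^m(u+s+vabm)$, which takes infinitely many values as $m$ varies unless $v=0$ (these half-traces are coordinates of points in the $\langle\sigma_1\rangle$-orbit of $p$, so this contradicts $p\in\mathcal P_2$); and $v=0$ gives $A_1,A_2$ a common eigenvector, hence $p\in E_1$, a contradiction. The subcase $b=0$, i.e.\ $A_2=\pm I_2$ (which your ``parabolic-up-to-scalar with $*\neq0$'' formulation does not address), gives $p=(x,a,ax)\in E_1$, again a contradiction. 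So the shear idea you gesture at is the right one, but it has to be executed rather than replaced by the incorrect off-axis claim.

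A secondary problem is in your reduction of the statement for general $\sigma$ to $\sigma=1$. Your fallback for the possibility that $p\sigma$ lies on a coordinate axis is wrong: a point $(0,y',0)$ with $|y'|>1$ is on an axis, lies in $\mathcal P_2$, and its associated second matrix has trace $2y'$, hence infinite order --- this is exactly why the lemma excludes the axes. What rescues the reduction is that this possibility never arises: the union of the three coordinate axes is invariant under the $\mathrm{Aut}(F_2)$-action (check it on the generators), so $p$ off the axes implies $p\sigma$ off the axes; since $E_1$ and $\mathcal P_2$ are also invariant, the $\sigma=1$ case applies verbatim to $p\sigma$, whose associated pair is conjugate to $(\sigma(A_1),\sigma(A_2))$. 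You should invoke this invariance instead of the faulty direct treatment of axis points.
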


\begin{proof} The result for arbitrary $\sigma$ will follow from the case $\sigma=1$. :et $p=(x,y,z)^T$. 

First note that
trace$\,(A_2)=2y$ where $ p=(x,y,z)^T$.  Now, since 
 $M \in {\rm SL}(2,\mathbb{C})$  has finite order if and only
if trace$(M)=2\cos 2\pi q$, for some $q \in \mathbb{Q}$ (except when this value is $\pm 2$, when we might have a parabolic, up to sign), we will next  show that $2y$ is of this form.

From (\ref{eq0.1})  we have: if $n \in \mathbb Z$, then
\begin{eqnarray*}
\begin{pmatrix} 0&0&1\\0&1&0\\-1&0&2y\end{pmatrix}^n\begin{pmatrix}
x\\y\\z\end{pmatrix}=\begin{pmatrix} x\\y\\z\end{pmatrix}\sigma_1^n;\\
\begin{pmatrix} 1&0&0\\0&2x&-1\\0&1&0\end{pmatrix}^n\begin{pmatrix}
x\\y\\z\end{pmatrix}=\begin{pmatrix} x\\y\\z\end{pmatrix}\sigma_2^n.
\end{eqnarray*}

Let $M=\begin{pmatrix} 0&0&1\\0&1&0\\-1&0&2y\end{pmatrix}$, so that $M\begin{pmatrix}
x\\y\\z\end{pmatrix} =\begin{pmatrix} x\\y\\z\end{pmatrix}\sigma_1$,  and let
$\lambda, 1/\lambda$ be the eigenvalues of the submatrix $M'=\begin{pmatrix} 0&1\\-1&2y\end{pmatrix}$.  Now trace$(M')=2y$,
where $y=$trace$(A_2)/2$.  Since $p \in \mathcal P_2$
we see that there is $n \in \mathbb N$ such that $(p)\sigma_1^n=p$
and so $(M')^n\begin{pmatrix} x\\z\end{pmatrix}=\begin{pmatrix} 
x\\z\end{pmatrix}$.  Now
if $x=z=0$, then $p=(x,y,z)^T $ is on a coordinate  axis.  Thus we may assume
that $(x,z)^T \ne (0,0)^T$.  It follows now that $1$ is an
eigenvalue of $(M')^n$,  showing that $\lambda^n=1$.  Thus $\lambda$ is
a root of unity and $2y=\lambda+1/\lambda=2\cos 2\pi q$, for some $q \in
\mathbb Q$.  We distinguish two cases:

{\bf Case 1: $\lambda \ne \pm 1$} Here the fact that
trace$(A_2)$=trace$(M')=2y=\lambda+1/\lambda=2\cos 2\pi q, q \in \mathbb
Q$, where
 $\lambda \ne 1/\lambda,$ shows that $M'$ is diagonalizable and of finite order, so that $A_2$ has finite order.

{\bf Case 2: $\lambda =\pm 1$} Here we may assume that $A_2=\begin{pmatrix}
a&0\\b&a\end{pmatrix}, a =\pm 1.$ There are two sub-cases: (i)
$b\ne 0$; and (ii) $b=0$.

If we have (i) $b \ne 0$, then we let $A_1=\begin{pmatrix}
u&v\\r&s\end{pmatrix},$  so that trace$(A_1A_2^m)=a^m(s+u+vabm)$, which takes
on infinitely many values for $m \in \mathbb N$ if $v \ne 0$,
contradicting $p\in \mathcal P_2$.  Thus $v=0$, from which it follows
that $A_1$ and $A_2$ have a common eigenvector and so $p \in E_1$, a
contradiction.

If we have (ii), then from trace$(A_1)=2x$ we get trace$(A_1A_2)=2ax$ and
$p=(x,a,ax) \in E_1$ since $a=\pm 1$.  This gives Lemma
\ref{l2.2}.
\end{proof}

\begin{corollary}\label{c2.3} Let $p \in \mathcal P_2$ and assume
that $p$ is not on level $1$ and not on a coordinate axis.
Let $A_1, A_2$ be the associated matrices.  Then
 each of $$ A_1A_2^h, A_2A_1^h, A_1(A_1A_2)^h, A_2(A_1A_2)^h, h \in \mathbb Z,$$ has 
finite order.
\end{corollary}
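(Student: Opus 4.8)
The strategy is to bootstrap off Lemma \ref{l2.2}. That lemma, applied to the point $p$ and to the automorphism $\sigma$, tells us that $\sigma(A_2)$ has finite order for every $\sigma \in \mathrm{Aut}(F_2)$. So the whole task reduces to producing, for each of the eight words in the statement, an automorphism $\sigma \in \mathrm{Aut}(F_2)$ such that $\sigma(a_2)$ is conjugate in $F_2$ to the given word in $a_1, a_2$ (conjugacy suffices, since conjugate elements of $\langle A_1,A_2\rangle$ have the same order, and trace is a conjugacy invariant anyway). Concretely: $A_1A_2^h = \mu(a_1a_2^h)$, and I want $a_1a_2^h$ to be (conjugate to) the image of $a_2$ under some automorphism.

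First I would exhibit the automorphisms explicitly. Recall $\sigma_1(a_1)=a_1a_2$, $\sigma_1(a_2)=a_2$, and $\sigma_2(a_1)=a_1$, $\sigma_2(a_2)=a_1^{-1}a_2$. For fixed $h \in \mathbb Z$ the map $\tau_h: a_1 \mapsto a_1,\ a_2 \mapsto a_2 a_1^{-h}$ is an automorphism of $F_2$ (it is a Nielsen transformation, inverse to $a_2 \mapsto a_2 a_1^h$), and then $\sigma_1 \circ \tau_h$ (or a suitable composite with the swap $a_1 \leftrightarrow a_2$, which is also an automorphism) sends $a_2$ to a word in the $\mathrm{Aut}(F_2)$-orbit of the cyclic words $a_1 a_2^h$, $a_2 a_1^h$. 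Since any $\sigma \in \mathrm{Aut}(F_2)$ can also be post-composed with an inner automorphism of $F_2$ without changing traces of images (by the standard fact already invoked in the paper that the trace triple only depends on the $\mathrm{Out}(F_2)$-class), it is enough to realize each target word up to conjugacy. The four words involving the generator $A_1A_2 = \mu(a_1a_2) = \sigma_1(A_1)$: here I would first change basis by $\sigma_1$, so that relative to the pair $(A_1' , A_2') = (\sigma_1(A_1), \sigma_1(A_2)) = (A_1A_2, A_2)$ we have $A_1 = A_1'(A_2')^{-1}$ and $A_1A_2 = A_1'$; but $A_1 A_2^{-1}$ has the same order as $A_1A_2$ by the trace identity $\mathrm{trace}(A_1A_2^{-1}) = \mathrm{trace}(A_1A_2)$ combined with... actually, it is cleaner simply to observe that Corollary \ref{c2.3} for the pair $(A_1,A_2)$ in the variables $A_1(A_1A_2)^h$, $A_2(A_1A_2)^h$ is literally Corollary \ref{c2.3} for the pair $(A_1A_2, A_2)$ in the variables $A_1' \cdot$ (stuff) — i.e. apply the already-established cases to the $\sigma_1$-transformed pair, using that $\langle A_1,A_2\rangle = \langle A_1A_2, A_2\rangle$ and that $p \in \mathcal P_2$ implies its $\sigma_1$-image is too.

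Thus the clean write-up is: (a) for each of $A_1A_2^h$ and $A_2A_1^h$, name an automorphism $\sigma$ with $\sigma(a_2)$ conjugate to that word and quote Lemma \ref{l2.2}; (b) for $A_1(A_1A_2)^h$ and $A_2(A_1A_2)^h$, note these equal $\sigma_1$-transformed versions of $A_1A_2^h$-type words for the pair $(\sigma_1(A_1),\sigma_1(A_2))$, and that $\mathcal P_2$ and the ``not on level $1$, not on an axis'' hypotheses are $\mathrm{Aut}(F_2)$-invariant (the level-$1$ condition because $E$ is invariant; the axis condition because the orbit of an axis point stays bounded only as described — or, more carefully, one checks directly that $\sigma_1(A_1),\sigma_1(A_2)$ don't fall into the excluded cases), so cases (b) reduce to cases (a). The main obstacle I anticipate is the bookkeeping in step (b): one must be sure that applying an automorphism does not move the point $p$ onto a coordinate axis (Lemma \ref{l2.2} genuinely needs that hypothesis), so I would include a short lemma-free remark that the $\mathrm{Aut}(F_2)$-orbit of a point off the axes and off $E_1$ stays off the axes — which follows because a point on an axis, say $y=0$, forces $\mathrm{trace}(A_2)=0$ hence $A_2$ of order $4$ with $A_2^2 = -I$ central, constraining the group, and one traces through which trace triples arise; alternatively this is immediate from the explicit classification in Theorem \ref{th1} but I would prefer the self-contained argument so as not to create a circularity.
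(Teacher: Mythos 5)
Your proposal is correct and is essentially the paper's own argument: the paper simply observes that each of the listed elements has the form $\sigma(A_2)$ for some $\sigma\in\mathrm{Aut}(F_2)$ and quotes Lemma \ref{l2.2}. One remark: your step (b) detour (transferring to the pair $(A_1A_2,A_2)$ and checking that the ``off $E_1$, off the axes'' hypotheses persist) is unnecessary, since Lemma \ref{l2.2} has hypotheses on $p$ alone but already concludes that $\sigma(A_2)$ has finite order for \emph{every} $\sigma$; the words $a_1(a_1a_2)^h$ and $a_2(a_1a_2)^h$ can be realized directly as $\sigma(a_2)$ by composing Nielsen transformations, so the lemma applies at the original point $p$.
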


\begin{proof}
 It is easy to see that each of these
elements has the form $\sigma(A_2)$ for some $\sigma \in
{\rm Aut}(F_2)$, and so the result follows from Lemma \ref {l2.2}.  
\end{proof}

Recall that the points of $V$ are permuted transitively  by $\mathrm{Aut}(F_2)$. We will need:
\begin {lemma} \label {lemImPi2}
Let $p\in E_1 \cap \mathcal P_2 \setminus V$ correspond to the matrices $A_1,A_2$. Then

\noindent  (i) $A_1,A_2$ are conjugate to a pair of lower-triangular matrices.
 
\noindent  (ii) There is some element  in the ${\rm Aut}(F_2)$-orbit of $p$ that has $1$ as an entry.
\end{lemma}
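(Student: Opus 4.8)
The plan is to reduce both parts to the description of the periodic points of $\partial\mathcal T$ given by Lemma \ref{l2.1}. Since $p\in E_1\cap\mathcal P_2$, part (ii) of that lemma places $p$ in $\partial\mathcal T\cap\mathcal P_2$, and part (i) then gives $p=\Pi_2(\theta)$ for some $\theta=(s/n,t/n)^T\in\mathbb Q^2/\mathbb Z^2$, with $s,t\in\mathbb Z$ and $n\in\mathbb N$. Because $\Pi_2^{-1}(V)=\tfrac12\mathbb Z^2/\mathbb Z^2$ and $p\notin V$, we also know $\theta\notin\tfrac12\mathbb Z^2/\mathbb Z^2$.

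For part (i), write $p=(x,y,z)^T$. The identity $\mathrm{trace}(A_1A_2A_1^{-1}A_2^{-1})/2=2E(x,y,z)-1$ together with $E(p)=1$ gives $\mathrm{trace}([A_1,A_2])=2$, and I would deduce from this that $A_1$ and $A_2$ have a common eigenvector; conjugating so that this eigenvector is $(0,1)^T$ then makes $A_1,A_2$ simultaneously lower triangular. This is the standard reducibility criterion for $\mathrm{SL}(2,\mathbb C)$: after conjugating $A_1$ to $\mathrm{diag}(\lambda,\lambda^{-1})$ (when $A_1$ is semisimple) or to $\pm\left(\begin{smallmatrix}1&1\\0&1\end{smallmatrix}\right)$ (when $A_1$ is parabolic) and writing $A_2=\left(\begin{smallmatrix}a&b\\c&d\end{smallmatrix}\right)$, a direct computation gives $\mathrm{trace}([A_1,A_2])-2=-bc(\lambda-\lambda^{-1})^2$ in the first case and $\mathrm{trace}([A_1,A_2])-2=c^2$ in the second, so in each case $\mathrm{trace}([A_1,A_2])=2$ forces $bc=0$, respectively $c=0$, i.e. a common eigenvector; the subcase $A_1=\pm I$ is immediate. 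Since this argument applies to \emph{any} pair $A_1,A_2\in\mathrm{SL}(2,\mathbb C)$ with half-traces $x,y,z$, it also disposes of the fact that the associated pair is only canonically defined off $E_1$. (Alternatively one may note that the matrix $A_2$ of (\ref{eq1.2}) has vanishing $(1,2)$-entry precisely on $E_1$ when $x\neq\pm1$, and treat $x=\pm1$ by hand.)

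For part (ii), I would use (\ref{eq1.1}): the $\mathrm{Aut}(F_2)$-orbit of $p=\Pi_2(\theta)$ equals $\{\Pi_2(g\theta):g\in\langle\Phi_2(\sigma_1),\Phi_2(\sigma_2)\rangle\}$, and since $\Phi_2(\sigma_1)=\left(\begin{smallmatrix}1&1\\0&1\end{smallmatrix}\right)$ and $\Phi_2(\sigma_2)=\left(\begin{smallmatrix}1&0\\-1&1\end{smallmatrix}\right)$ generate $\mathrm{SL}(2,\mathbb Z)$, this is $\{\Pi_2(g\theta):g\in\mathrm{SL}(2,\mathbb Z)\}$. It therefore suffices to move $\theta$ by an element of $\mathrm{SL}(2,\mathbb Z)$ to a point with a zero coordinate, since $\Pi_2(0,\theta_2')^T$ and $\Pi_2(\theta_1',0)^T$ each have $1$ among their three entries. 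Now let $d=\gcd(s,t,n)$; writing $s=ds'$, $t=dt'$ with $\gcd(s',t',n/d)=1$ one sees that $(s',t')$ completes to a matrix of $\mathrm{SL}(2,\mathbb Z/(n/d))$, so lifting along $\mathrm{SL}(2,\mathbb Z)\twoheadrightarrow\mathrm{SL}(2,\mathbb Z/(n/d))$ and rescaling by $d$ produces $g\in\mathrm{SL}(2,\mathbb Z)$ with $g\cdot(s,t)^T\equiv(d,0)^T\pmod{n}$. Hence $g\theta\equiv(d/n,0)^T\pmod{\mathbb Z^2}$, and $\Pi_2(g\theta)=(\cos(2\pi d/n),1,\cos(2\pi d/n))^T$ lies in the orbit of $p$ with $1$ as its middle coordinate; because $\theta\notin\tfrac12\mathbb Z^2/\mathbb Z^2$ we even get $\cos(2\pi d/n)\neq\pm1$, so this point is not in $V$, consistent with $p\notin V$.

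I expect the point requiring care to be the orbit reduction in part (ii): namely that it is $d=\gcd(s,t,n)$, not $\gcd(s,t)$, that governs the $\mathrm{SL}(2,\mathbb Z)$-orbit of $\theta$ in $\mathbb T^2=(\mathbb R/\mathbb Z)^2$ — hence the passage through $\mathrm{SL}(2,\mathbb Z/(n/d))$ and the completability of unimodular rows. The reducibility computation in part (i) is routine, the only mild nuisance being to remember the parabolic case for $A_1$.
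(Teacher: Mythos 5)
Your proposal is correct, and while your part (i) is in substance the same kind of direct computation as the paper's (the paper diagonalizes whichever of $A_1,A_2,A_1A_2$ has distinct eigenvalues and finds $E(x_1,x_2,x_{12})-1=\tfrac{1}{4\lambda^2}(1-\lambda^2)^2(1-ad)$, forcing $bc=0$, with the remaining case pushed into $p\in V$; you phrase the same reducibility fact via $\mathrm{trace}[A_1,A_2]=2$ and handle the parabolic and central cases explicitly), your part (ii) takes a genuinely different route. The paper stays with the lower-triangular pair produced by (i): it uses periodicity of the traces of $A_1^{r}A_2$ to show the diagonal entries $\lambda_1,\lambda_2$ are roots of unity, invokes cyclicity of finite subgroups of $\mathbb C^{\times}$, and then applies an automorphism of the form $a_2\mapsto a_2a_1^{u}$ to make a half-trace equal to $1$. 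You instead bypass the matrices entirely: Lemma \ref{l2.1} puts $p=\Pi_2(s/n,t/n)$ with rational angles, and equation (\ref{eq1.1}) converts the problem into moving $(s/n,t/n)$ by $\mathrm{SL}(2,\mathbb Z)=\langle\Phi_2(\sigma_1),\Phi_2(\sigma_2)\rangle$ to a point with one coordinate in $\mathbb Z$, whose image under $\Pi_2$ has middle entry $1$; your lifting argument through $\mathrm{SL}(2,\mathbb Z/(n/d))$ is valid. The trade-off: the paper's argument is self-contained at the matrix level and incidentally records the root-of-unity information that reappears in condition (\ref{itemiii}) of Theorem \ref{th1}, whereas yours leans on Lemma \ref{l2.1} and then needs only elementary lattice arithmetic, which is arguably cleaner. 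One simplification to your own route: the congruence care you flag is unnecessary for this lemma, since an exact Euclidean reduction already gives $g\in\mathrm{SL}(2,\mathbb Z)$ with $g(s,t)^T=(\gcd(s,t),0)^T$, so $g\theta$ has second coordinate exactly $0$ and $\Pi_2(g\theta)$ has $1$ as an entry; the finer invariant $\gcd(s,t,n)$ matters only if you want to describe the full $\mathrm{SL}(2,\mathbb Z)$-orbit in $\mathbb T^2$, which the statement does not require.
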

\noindent {\it Proof} Let $p=(x_1,x_2,x_{12})$. (i) 
First  assume that one of $A_1, A_2, A_1A_2$ has distinct eigenvalues. Without loss of generality we assume that $A_1$ has eigenvalues $\lambda \ne 1/\lambda$. Then, by a conjugacy,  we can further assume $A_1=\begin{pmatrix} \lambda &0\\0&1/\lambda\end{pmatrix}, 
A_2=\begin{pmatrix} a&b\\c&d\end{pmatrix}.$ We calculate $x_1=\mathrm{trace}(A_1)/2, x_2=\mathrm{trace}(A_2)/2, 
x_{12}=\mathrm{trace}(A_1A_2)/2,$ and find that
$$E(x_1,x_2,x_{12})-1=\frac {1} {4\lambda^2} (1-\lambda^2)^2( 1-ad).$$
Since we have $E(x_1,x_2,x_{12})-1=0$ we thus have either (i) $\lambda=\pm 1$; or (ii) $ad=1$. If we have (ii), then $\det(A_2)=1$ gives $bc=0$, and we are done.
If we have  (i), then $\lambda=1/\lambda$, a contradiction. 

Thus we may now assume that each of $A_1, A_2, A_1A_2$ do not have distinct eigenvalues, so that $x_1,x_2,x_{12} \in \{\pm 1\}$.  But in this case we must have $p \in V$, a contradiction.

(ii) From (i) we can take $A_i=\begin{pmatrix}\lambda_i&0\\ c_i&\frac {1} {\lambda_i}\end{pmatrix},i=1,2$. We claim that  the  $\lambda_i$ are roots of unity.
Since $ p \in \mathcal P_2$ there $0<r_1<r_2<r_3$ such that
trace$(A_1^{r_1}A_2)$=trace$(A_1^{r_2}A_2)$ =trace$(A_1^{r_3}A_2)$. Let $1\le i\ne j\le 3$. Solving trace$(A_1^{r_i}A_2)$=trace$(A_1^{r_j}A_2)$
gives 
$$(\lambda_1^{r_i}-\lambda_1^{r_j})(\lambda_2^2\lambda_1^{r_i+r_j}-1)=0.$$
If $\lambda_1^{r_i}-\lambda_1^{r_j}=0$, then $\lambda_1$ is a root of unity. If not, then  we must have 
$\lambda_1^{r_i+r_j}=\lambda_2^{-2}$ for all such $i,j=1,2,3$. But then  
$\lambda_1^{r_1+r_2}=\lambda_1^{r_1+r_3}=\lambda_2^{-2}$, so that  $\lambda_1$ is again a root of unity. This proves the claim. 

Since the multiplicative group $\langle \lambda_1,\lambda_2\rangle\le \mathbb C^\times $ is a finite subgroup of a field
we see that it is cyclic, and so is  generated by $\lambda_1, \lambda_2$ or $\lambda_1\lambda_2$.
If it is generated by $\lambda_1$, then there is some $u \in \mathbb N$ such that ${\rm trace}(A_2A_1^u)=2$, and we have found an element of the orbit of $p$ that has $1$ as an entry.
If $\langle \lambda_1,\lambda_2\rangle$ is generated by $\lambda_2$ or $\lambda_1\lambda_2$, then we can similarly obtain condition (ii).\qed \medskip

We will have need of the following result   \cite [Algebraic Lemma, p. 101] {Dub}.
 An {\it admissible triple} is a triple $(a,b,c)\in \mathbb R^3$ where at most one of $a,b,c$ is zero.

\begin{lemma} \label {lemvinberg} Let $(a,b,c)\in \mathbb R^3$ be an admissible triple satisfying
$$a^2+b^2+c^2-2abc>1, \text { and } |a|,|b|,|c| \le 1. $$
Then there is  $\beta \in B_3$ such that the absolute value of one of the coordinates of $(a,b,c)\beta$ is greater than $1$.\qed\end{lemma}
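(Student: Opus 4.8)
The plan is to set $\Delta(a,b,c)=a^2+b^2+c^2-2abc$ and work inside the closed cube $[-1,1]^3$ with the standing hypothesis $\Delta(a,b,c)>1$. First I would reduce the problem to a reachability statement for the single generator $\sigma_1$ and its braid-conjugates: since $B_3$ is generated by $\sigma_1,\sigma_2$ and these act by the Vieta involutions $(a,b,c)\mapsto(a,b,2ab-c)$ etc., it suffices to show that if \emph{every} element of the $B_3$-orbit of $(a,b,c)$ stayed in $[-1,1]^3$, we would reach a contradiction with $\Delta>1$ and admissibility. The key computational fact I would record is the standard one: under $\sigma_1:(a,b,c)\mapsto(a,b,c')$ with $c'=2ab-c$, one has $c+c'=2ab$ and $cc'=a^2+b^2-\Delta$, so $c,c'$ are the two roots of $T^2-2abT+(a^2+b^2-\Delta)=0$; in particular the "other root" map is an involution on each line, and $\Delta$ is preserved. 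The same holds cyclically for the other two coordinates. So the $B_3$-action is exactly the Markov/Vieta dynamics on the level set $\Delta=t$ with $t>1$ fixed.

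The core of the argument is then a descent. Assuming the whole orbit lies in $[-1,1]^3$, I would pick a point of the orbit that (approximately) minimizes $|a|+|b|+|c|$, or better, minimizes $\max(|a|,|b|,|c|)$ — one must be slightly careful since the orbit may be infinite, so I would instead argue by a strict-monotonicity dichotomy: for any point in $[-1,1]^3$ with $\Delta>1$, at least one of the three Vieta moves strictly increases one coordinate's absolute value, \emph{unless} the point already has a coordinate at $\pm1$ or sits at a fixed configuration. Concretely, from $cc'=a^2+b^2-\Delta$ and $\Delta>1$ with $|a|,|b|\le1$ one gets $cc'=a^2+b^2-\Delta< a^2+b^2-1\le \min(a^2,b^2)\le 1$... actually the useful inequality is $cc' = a^2+b^2-\Delta < a^2+b^2-1 \le 0$ when not both $|a|=|b|=1$, forcing $c$ and $c'$ to have opposite signs and hence $\max(|c|,|c'|)\ge$ something controlled; iterating and tracking which coordinate is largest should force one of $|a|,|b|,|c|$ above $1$ in finitely many steps. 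This is where the admissibility hypothesis (at most one of $a,b,c$ zero) is needed: it rules out the degenerate fixed configurations like $(0,0,c)$ where the dynamics collapses, and it guarantees the product $ab$ (or the relevant pair-product) driving the move is not forced to be zero at every step.

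I expect the main obstacle to be the bookkeeping of the descent: showing that the process of applying Vieta moves genuinely \emph{terminates} in an escape from the cube rather than cycling forever inside it. The clean way around this is to exhibit an explicit Lyapunov-type quantity — for instance $\min(|a|,|b|,|c|)$ cannot increase past a point, while $\max$ must eventually jump — and combine it with the boundary analysis: if some coordinate equals exactly $\pm1$ and the orbit stays in the cube, then $\Delta>1$ combined with $|c|=1$ forces $a^2+b^2-2|ab|>0$ strict, i.e. $|a|\ne|b|$, and one checks directly that a further move escapes. Since this is precisely the "Algebraic Lemma" of \cite{Dub}, I would in practice cite their proof; the sketch above is the route I would reconstruct it by, and the delicate point in any self-contained write-up is making the finiteness of the descent rigorous rather than heuristic.
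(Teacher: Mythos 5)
The paper itself gives no proof of this lemma: it is quoted verbatim from Dubrovin--Mazzocco (\cite[Algebraic Lemma, p.~101]{Dub}), with the \qed attached to the statement. So your final fallback --- citing \cite{Dub} --- is exactly what the paper does, and to that extent your treatment matches it.

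However, the reconstruction you sketch does not stand on its own, for two concrete reasons. First, your driving inequality is false as stated: from $cc'=a^2+b^2-\Delta<a^2+b^2-1$ you conclude $cc'<0$ ``when not both $|a|=|b|=1$'', but $a^2+b^2-1\le 0$ requires $a^2+b^2\le 1$, not merely that $|a|,|b|$ are not both $1$ (take $a=b=0.9$, $c=0.5$: then $\Delta=1.06>1$ and $cc'=0.56>0$, so $c$ and $c'$ need not have opposite signs). Hence the sign dichotomy you want to iterate is not available in general, and the claim that at every non-degenerate point some single Vieta move strictly increases a coordinate in absolute value is asserted rather than proved. Second, even granting a one-step increase, you acknowledge yourself that the termination of the descent (that the orbit must actually leave the cube rather than the increments shrinking or the orbit recurring inside $[-1,1]^3$) is left heuristic; no Lyapunov function is exhibited, and the role of admissibility is only gestured at. So as a self-contained argument there is a genuine gap; as a citation it is fine, but then the sketch should not be presented as the proof. (For the record, the actual argument in \cite{Dub} proceeds differently, via the parametrization of the cube coordinates by cosines and an analysis of the induced affine action on the angles, rather than by a Markov-type descent on absolute values.)
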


\noindent {\it Proof of Theorem \ref {th1}} 

Let $p=(x,y,z) \in \mathcal P_2$ where $p \notin V$ is not on a coordinate axis.

If $p \in E_1$, then by Lemma \ref {lemImPi2}  we can assume that (up to a conjugacy) we have $A_1=\begin{pmatrix} 1&0\\c&1\end{pmatrix}, A_2=
 \begin{pmatrix} \lambda&0\\d&1/\lambda\end{pmatrix}$. Since the set of traces of the elements $A_1A_2^n, n \in \mathbb Z,$ is finite, we see that there are infinitely many $i,j \in \mathbb N, i \ne j,$ such that $\lambda^i+1/\lambda^i=\lambda^j+1/\lambda^j$; but then 
 $$0=\lambda^i+1/\lambda^i-\lambda^j-1/\lambda^j=(\lambda^{i+j}-1)(\lambda^i-\lambda^j)/(\lambda^{i+j}),$$
which shows that $\lambda$ is a root of unity. Thus we have case  (3) of Theorem \ref {th1}. 

Now assume that $p=(x,y,z) \in E_t, t>1$. Then by Lemma \ref {lemvinberg} we may assume that one of $x,y,z$ has absolute value greater than $1$.  In fact we may assume that $|y|>1$. But this contradicts Lemma \ref {l2.2} which shows that $|y|=|\cos(\theta)|$ for some $\theta \in \mathbb R$. 

We are left with the cases $p \in E_t, t<1$, which are dealt with in \cite {Dub}. \qed\medskip 


 \section {Periodic points in Image$(\Pi_3)$ and $V(\mathcal X)$}


As in the $n=2$ case we note that each element of $\Pi_3((\mathbb Q/\mathbb Z)^3)$ is in $\mathcal F_3$.

The analogue of Lemma \ref {l2.1} for the situation where  $n=3$ is

\begin{lemma}\label{l2.133} (i) A point $p \in
{\rm Image}(\Pi_3)$ is in $\mathcal P_3$ if and
only if there is $(\theta_1,\theta_2,\theta_3)^T \in \mathbb{ Q}^3/\mathbb{ Z}^3$
such that $\Pi_3(\theta_1,\theta_2,\theta_3)^T=p.$

(ii) Let $p  \in V(\mathcal X)\cap [-1,1]^7$.  Then $p \in \mathcal P_3$  if and
only if 
 $p \in {\rm Image}(\Pi_3) \cap \mathcal P_3$.
\end{lemma}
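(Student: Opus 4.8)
\textbf{Proof plan for Lemma \ref{l2.133}.}

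The plan is to imitate the structure of the proof of Lemma \ref{l2.1}, using the homomorphism $\Phi_3:\mathrm{Aut}(F_3)\to\mathrm{GL}(3,\mathbb Z)$ induced by abelianization, under which the generators $U,P,S,Q$ go to the obvious matrices (in particular $\Phi_3(S)=\mathrm{diag}(-1,1,1)$, $\Phi_3(Q)$ the cyclic permutation, $\Phi_3(P)$ the transposition of the first two coordinates, and $\Phi_3(U)$ the elementary matrix adding the first coordinate to the second). The key compatibility, analogous to \eqref{eq1.1}, is that for $\alpha\in\mathrm{Aut}(F_3)$ and $\theta=(\theta_1,\theta_2,\theta_3)^T\in\mathbb T^3$ we have $(\Pi_3\theta)\alpha=\Pi_3(\Phi_3(\alpha)\theta)$; this is checked on the four generators $U,P,S,Q$ by comparing the explicit trace-map formulas in Section \ref{sec2} against the cosine addition formulas, exactly as was done for $n=2$. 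I would record this as a preliminary observation (or cite that it follows from Lemma \ref{lemparam} together with a generator-by-generator check).

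For part (i), the forward direction of the nontrivial implication is the easy one: if $\theta\in\mathbb Q^3/\mathbb Z^3$, write $\theta=(s_1/m,s_2/m,s_3/m)^T$; given any $\sigma\in\mathrm{Aut}(F_3)$, the matrix $\Phi_3(\sigma)\in\mathrm{GL}(3,\mathbb Z)$ has finite order modulo $m$, so some power $\Phi_3(\sigma^u)\equiv I_3\pmod m$ fixes $\theta$ in $\mathbb T^3$, hence $\sigma^u$ fixes $p=\Pi_3(\theta)$; thus $p\in\mathcal P_3$. For the converse, suppose $\theta\in\mathbb T^3\setminus(\mathbb Q^3/\mathbb Z^3)$; without loss of generality (using the transitivity of $\Phi_3(\mathrm{Aut}(F_3))$ on coordinates via $P,Q$, and $\Phi_3(S)$ to handle signs) some $\theta_i$ is irrational. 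Then I would exhibit a single automorphism, e.g.\ a conjugate of $U$ or its inverse, whose abelianization acts on $\mathbb T^3$ by $\theta\mapsto\theta+(\text{integer vector})\cdot\theta_i$-type shearing so that the $\langle\cdot\rangle$-orbit of $\theta$ is infinite (the translates $\theta_j - k\theta_i\bmod 1$ are pairwise distinct). Care is needed because $\Pi_3$ factors through $\mathbb T^3/(\pm1)$, so I must rule out the orbit being finite after identifying $\theta$ with $-\theta$; but an infinite orbit of a shear on the torus remains infinite after quotienting by the two-element group $\{\pm1\}$. This gives $p\notin\mathcal P_3$ and completes (i).

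For part (ii): given $p\in V(\mathcal X)\cap[-1,1]^7\cap\mathcal P_3$, I need to show $p\in\mathrm{Image}(\Pi_3)$. Since $V(\mathcal X)$ is three-dimensional with $x_{13},x_{23},x_{123}$ algebraically independent, and $\mathrm{Image}(\Pi_3)\cong\mathbb T^3/(\pm1)$ is also three-dimensional and sits inside $V(\mathcal X)\cap[-1,1]^7$, the content is that the compact locus of $V(\mathcal X)$ inside the cube is exactly $\mathrm{Image}(\Pi_3)$. I would argue as follows: from the first two listed generators of $\mathcal X$, namely $x_3^2-2x_3x_{12}x_{123}+x_{12}^2+x_{123}^2-1=0$ and $x_2^2-2x_2x_{13}x_{123}+x_{13}^2+x_{123}^2-1=0$, and the relation expressing $x_1$ in terms of the others, one sees that each $x_I$ satisfies $E$-type quadratic relations; with all $|x_I|\le1$ one can solve recursively to find angles $t_1,t_2,t_3$ with $x_i=\cos2\pi t_i$, $x_{ij}=\cos2\pi(t_i+t_j)$, $x_{123}=\cos2\pi(t_1+t_2+t_3)$ — essentially because the restriction of $E-1$ to $[-1,1]^3$ cuts out precisely $\partial\mathcal T=\mathrm{Image}(\Pi_2)$, applied to the subtriples. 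Then $p=\Pi_3(t_1,t_2,t_3)\in\mathrm{Image}(\Pi_3)$, and being in $\mathcal P_3$ forces $(t_1,t_2,t_3)^T\in\mathbb Q^3/\mathbb Z^3$ by part (i).

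\textbf{Main obstacle.} The step I expect to be delicate is the converse of (ii) — carefully showing that a point of $V(\mathcal X)$ with all coordinates in $[-1,1]$ actually lies in the image of $\Pi_3$, i.e.\ that the cube-bounded real locus of the ideal $\mathcal X$ is \emph{exactly} $\mathbb T^3/(\pm1)$ and has no extra components (compare how, for $n=2$, $E_1\cap[-1,1]^3$ contains not just $\partial\mathcal T$ but also pieces of the six boundary segments, which had to be accounted for). Disentangling which solutions of the quadratic relations are realizable by a consistent choice of angles, and checking no spurious boundary strata appear, is where the real work lies; the rest is bookkeeping with $\Phi_3$ and $\Pi_3$ parallel to the $n=2$ case.
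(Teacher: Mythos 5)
Your part (i) is fine and is essentially the paper's own argument (the paper simply says it is ``similar to'' Lemma \ref{l2.1}(i)): a rational angle vector is fixed by a suitable power of each automorphism because its image under the abelianization map has finite order modulo the common denominator, while an irrational coordinate gives an infinite orbit under a shear-type generator, and this survives the quotient by $\pm 1$.

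The genuine gap is in part (ii), and it is precisely the point you label the ``main obstacle'' and then leave unresolved. Knowing that each subtriple $(x_1,x_2,x_{12})$, $(x_1,x_3,x_{13})$, $(x_2,x_3,x_{23})$, $(x_1,x_{23},x_{123})$, $(x_2,x_{13},x_{123})$ lies on $\partial\mathcal T={\rm Image}(\Pi_2)$ does not by itself produce a single coherent triple of angles: after you use the sign freedom in $\theta_1,\theta_2$ to arrange $x_{12}=\cos(\theta_1+\theta_2)$ and the remaining sign freedom in $\theta_3$ to arrange $x_{13}=\cos(\theta_1+\theta_3)$, the relation $E(x_2,x_3,x_{23})=1$ only yields $x_{23}=\cos(\theta_2\pm\theta_3)$, and no freedom is left to fix that sign. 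Excluding $x_{23}=\cos(\theta_2-\theta_3)$ requires an input beyond the $E$-type relations on subtriples; the paper does this by substituting the candidate values into the further ideal element $2x_1x_2x_3^2-x_1x_2-x_1x_3x_{23}-x_2x_3x_{13}-x_3^2x_{12}+x_{12}+x_{13}x_{23}\in\mathcal X$, which under the wrong sign evaluates to $-\sin\theta_1\sin\theta_2\sin^2\theta_3$, hence cannot vanish for generic angles; it then pins down $x_{123}$ by playing the two relations $E(x_1,x_{23},x_{123})=1$ and $E(x_2,x_{13},x_{123})=1$ against each other, whose simultaneous solution forces $x_{123}=\cos(\pm(\theta_1+\theta_2+\theta_3))$. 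Your ``solve recursively'' step would have to contain exactly such an argument (together with attention to the degenerate cases where some $\sin\theta_i=0$, in which the two sign choices coincide or extra freedom reappears), so as written the crucial consistency step of (ii) is identified but not proved; the rest of your outline, including the observation that $E_1\cap[-1,1]^3$ is exactly $\partial\mathcal T$, is correct but is not where the difficulty lies.
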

\noindent {\it Proof} The proof of  (i) is similar to the proof of Lemma  \ref {l2.1}   (i).

(ii)   So let $p=(x_1,x_2,\dots,x_{123}) ^T \in V(\mathcal X)\cap [-1,1]^7$. 
Then the fact that $|x_i|\le 1,i=1,2,3,$ means that we can write
$x_i=\cos( \theta_i),i=1,2,3$. Now $x_1^2+x_2^2+x_{12}^2-2x_1x_2x_{12}-1 \in \mathcal X$. Solving this equation for $x_{12}$ we get
$$x_{12}=\cos(\theta_1)\cos(\theta_2)\pm \sin(\theta_1)\sin(\theta_2)=\cos(\theta_1\mp \theta_2).$$
Thus we can change the sign of one of $\theta_1,\theta_2$ (if necessary) so that we have 
\begin{align*}\tag {4.1}
x_1=\cos( \theta_1), \,\,\, x_2=\cos( \theta_2),\,\,\, x_{12}=\cos( \theta_1+\theta_2).
\end{align*}

We similarly have $x_1^2+x_3^2+x_{13}^2-2x_1x_3x_{13}-1 \in \mathcal X$, which gives
$x_{13}=\cos(\theta_1\mp \theta_3).$ By changing the sign of $\theta_3$ if necessary, we may thus assume that
\begin{align*}
\tag {4.2} x_{13}=\cos(\theta_1+\theta_3).
\end{align*}

Now we also have  $x_2^2+x_3^2+x_{23}^2-2x_2x_3x_{23}-1 \in \mathcal X$
which gives $x_{23}=\cos(\theta_2\pm \theta_3)$. Of course, what we want is $x_{23}=\cos(\theta_2+ \theta_3)$.
So assume otherwise. Then we note that 
$$2{x_1} { x_2} x_3^{2}-  {  x_1} {  x_2}- {  
x_1} {  x_3} {  x_{23} }-  {  x_2} {  x_3} {  x_{13}}-
  x_3^{2}{  x_{12}}+  {  x_{12}}+{  x_{13}} {  x_{23}}
 \in \mathcal X.$$
 Now substituting (4.1), (4.2) and $x_{23}=\cos(\theta_2-\theta_3)$ into  this element gives 
$$-\sin(\theta_1)\sin(\theta_2)\sin^2(\theta_3),$$
which is non-zero for generic $\theta_i$, and so we must have
\begin{align*}
 x_{23}=\cos(\theta_2+ \theta_3).\end{align*}

Now, considering the element
$x_1^2+x_{23}^2+x_{123}^2-2x_1x_{23}x_{123}-1 \in \mathcal X$, we see that 
$x_{123}=\cos(\pm (\theta_1\pm (\theta_2+\theta_3))),$ while 
 considering the element
$x_2^2+x_{13}^2+x_{123}^2-2x_2x_{13}x_{123}-1 \in \mathcal X$, we see that 
$x_{123}=\cos(\pm (\theta_2\pm (\theta_1+\theta_3))).$ Thus the only possibility for reconciling these two equations  gives $x_{123}=\cos(\pm (\theta_1+ \theta_2+\theta_3)).$
\qed\medskip

\begin{lemma}\label{l27.5} (a) Suppose that we have a triple of matrices $A_1,A_2,A_3 \in \mathrm{SL}(2,\mathbb C)$ corresponding to the point $(x_1,x_2,x_3,x_{12},x_{13},x_{23},x_{123}) \in \mathcal P_3$, where one of $x_1,x_2,$ $x_3, x_{12},x_{13},x_{23},x_{123}$ is $\pm 1$, and the corresponding matrix is a non-identity parabolic (up to sign). Then 
$(x_1,x_2,x_3,x_{12},x_{13},x_{23},x_{123}) \in {\rm Image}(\Pi_3)$.

(b)
If one of $x_1,x_2,x_3,x_{12},x_{13},x_{23},x_{123}$ is $\pm 1$, and the corresponding matrix is the identity   matrix, then either 
$\langle A_1,A_2,A_3\rangle$ is a finite group or $(x_1,x_2,x_3,x_{12},x_{13},$ $x_{23},$ $ x_{123}) \in \partial \mathcal T_i$ for some $1\le i\le 7$.
\end{lemma}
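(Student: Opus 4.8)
The plan is to treat (a) and (b) in turn, in each case reducing to the $n=2$ analysis of Lemma~\ref{l2.1} and Lemma~\ref{lemImPi2} applied to suitable pairs drawn from $A_1,A_2,A_3$, and then to patch together the resulting trigonometric parametrisations coordinate-by-coordinate exactly as in the proof of Lemma~\ref{l2.133}(ii). First, by applying a suitable element of $\mathrm{Aut}(F_3)$ (which permutes the seven half-trace coordinates via $\Sigma_7$ and, more importantly, replaces the triple $A_1,A_2,A_3$ by a Nielsen-equivalent one), I would normalise so that the coordinate equal to $\pm1$ is, say, $x_1=\mathrm{trace}(A_1)/2$; so $A_1$ is either parabolic (case (a)) or $\pm I_2$ (case (b)). After conjugating $\langle A_1,A_2,A_3\rangle$ we may take $A_1=\pm\begin{pmatrix}1&1\\0&1\end{pmatrix}$ in case (a), or $A_1=\pm I_2$ in case (b).

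For part (a): with $A_1$ a fixed non-identity parabolic, I would look at the two pairs $(A_1,A_2)$ and $(A_1,A_3)$. Each pair corresponds to a point of $\mathbb{R}^3$ that lies on $E_1$ — indeed $\mathrm{trace}(A_1A_iA_1^{-1}A_i^{-1})=2$ forces the Fricke value $E=1$ when $A_1$ is parabolic — and is periodic for the relevant copy of $\mathrm{Aut}(F_2)$ inside $\mathrm{Aut}(F_3)$, hence lies in $\mathcal P_2\cap E_1$. By Lemma~\ref{l2.1}(ii) and Lemma~\ref{lemImPi2} each such pair is conjugate to lower-triangular matrices with root-of-unity diagonal entries; combined with $A_1$ parabolic this pins down enough structure that $x_1,x_2,x_3$ and the two-index traces involving index $1$ are all of the form $\cos 2\pi(\text{rational})$, and moreover are realised by the diagonal/parabolic normal form. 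The remaining coordinates $x_{23}$ and $x_{123}$ are then forced by the generators of $\mathcal X$ (the relations $x_2^2+x_3^2+x_{23}^2-2x_2x_3x_{23}-1$ and $x_1^2+x_{23}^2+x_{123}^2-2x_1x_{23}x_{123}-1$ and the cubic sign-fixing relation), exactly as in the proof of Lemma~\ref{l2.133}(ii); the sign ambiguities are resolved the same way. This produces $(\theta_1,\theta_2,\theta_3)$ with the septuple equal to $\Pi_3(\theta_1,\theta_2,\theta_3)$, i.e. the point is in $\mathrm{Image}(\Pi_3)$.

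For part (b): if $A_1=\pm I_2$, then $\langle A_1,A_2,A_3\rangle=\langle A_2,A_3\rangle$ (up to the central $\pm I_2$), so the whole problem collapses to the $n=2$ situation for the pair $(A_2,A_3)$, whose associated point $q=(x_2,x_3,x_{23})$ lies in $\mathcal P_2$. By Theorem~\ref{th1}, either $\langle A_2,A_3\rangle$ is finite — whence $\langle A_1,A_2,A_3\rangle$ is finite — or $q$ is on a coordinate axis of $\mathbb{R}^3$, or $(A_2,A_3)$ is conjugate to lower-triangular matrices with root-of-unity diagonals (the case where some word is parabolic included). In the axis case and the triangular/parabolic case, the septuple $(x_1,\dots,x_{123})$ has the shape of one of the $u_i$ defining the $\mathcal U_i$ (several coordinates vanish, the nonzero ones being determined by $q$), and since it also lies in $V(F)$ it lies in some $\partial\mathcal T_i=\mathcal U_i\cap V(F)$; one checks which $\mathcal U_i$ using $\Sigma_7$ and the fact that $A_1=\pm I_2$ forces $x_1=\pm1$ together with the specific vanishing pattern. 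I would need to be slightly careful that ``lower-triangular with root-of-unity diagonals'' for $(A_2,A_3)$, together with $A_1$ central, genuinely lands in $V(F)\cap\mathcal U_i$ rather than merely near it; this is a direct computation with the explicit matrices.

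The main obstacle I anticipate is the bookkeeping in part (a): reconciling the several local parametrisations of the individual coordinates as cosines of rational multiples of $2\pi$ into a single consistent $(\theta_1,\theta_2,\theta_3)$, and in particular controlling all the $\pm$ sign choices for $x_{13},x_{23},x_{123}$ so that they are mutually compatible — precisely the step that in Lemma~\ref{l2.133}(ii) was handled by plugging candidate values into specific generators of $\mathcal X$ and observing the result is nonzero for generic $\theta_i$ unless the signs line up. I would reuse that mechanism verbatim, but must first secure the rationality and the realisability-in-normal-form of the index-$1$ coordinates from the parabolic normal form of $A_1$, which is where the argument differs from the generic $V(\mathcal X)$ case.
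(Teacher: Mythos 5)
Your opening reduction (moving the $\pm1$ coordinate to $x_1$ by an automorphism, and in (b) collapsing to the pair $(A_2,A_3)$ and Theorem \ref{th1}) matches the paper, but there is a genuine gap in part (a) at exactly the step that carries the content of the lemma. You justify applying Lemma \ref{l2.1}(ii) and Lemma \ref{lemImPi2} to the pairs $(A_1,A_2)$, $(A_1,A_3)$ by asserting that these pairs lie on $E_1$ because $\mathrm{trace}(A_1A_iA_1^{-1}A_i^{-1})=2$ whenever $A_1$ is parabolic. That is false: for $A_1=\begin{pmatrix}1&1\\0&1\end{pmatrix}$, $A_2=\begin{pmatrix}0&1\\-1&0\end{pmatrix}$ the commutator has trace $3$ and $E(1,0,-\tfrac12)=\tfrac54$. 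Having $E=1$ for a pair is equivalent to the pair sharing an eigenvector, and that reducibility is precisely what must be extracted from periodicity; the paper does this by normalising $A_1=\begin{pmatrix}a&0\\b&a\end{pmatrix}$ with $a=\pm1$, $b\neq0$, writing $A_2=\begin{pmatrix}u&v\\w&x\end{pmatrix}$, and computing $\mathrm{trace}(A_1^nA_2)=a^n(u+x+abnv)$, so that $p\in\mathcal P_3$ forces $v=0$; Corollary \ref{c2.3} then makes the diagonal entries roots of unity, the same argument puts $A_3$ in lower-triangular form in the \emph{same} basis, and all seven coordinates are then literally cosines of angle sums. Your fallback mechanism cannot substitute for this: the sign-reconciliation in Lemma \ref{l2.133}(ii) uses generators of $\mathcal X$ (such as $x_2^2+x_3^2+x_{23}^2-2x_2x_3x_{23}-1$ and the cubic relation), and those are available there only because the hypothesis is $p\in V(\mathcal X)$. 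Here you only know $F(p)=0$; the $\mathcal X$-relations are not consequences of $F$ and vanish exactly on the locus you are trying to show $p$ lies in, so invoking them is circular. (The argument could be repaired by applying Theorem \ref{th1} to each pair, noting $\langle A_1,A_i\rangle$ is infinite since it contains a nontrivial $\pm$parabolic, and then using that a non-identity parabolic has a unique eigenline to obtain simultaneous triangularity of all three matrices; but as written the crucial reducibility is assumed, not proved.)

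In part (b) your handling of case (3) of Theorem \ref{th1} also fails: if $(A_2,A_3)$ is conjugate to a lower-triangular pair with root-of-unity diagonals, the septuple $(\varepsilon,x_2,x_3,\varepsilon x_2,\varepsilon x_3,x_{23},\varepsilon x_{23})$ generically has \emph{no} zero coordinates, so it does not acquire the vanishing pattern of any $u_i$ and lies in no $\partial\mathcal T_i$; the correct conclusion in that case (as in the paper's proof) is $p\in\mathrm{Image}(\Pi_3)$, obtained by writing $x_2=\cos\theta_2$, $x_3=\cos\theta_3$, $x_{23}=\cos(\theta_2+\theta_3)$ and $\varepsilon=\cos 0$ or $\cos\pi$. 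Only the coordinate-axis case of Theorem \ref{th1} lands in a $\partial\mathcal T_i$ (after normalisation, $\partial\mathcal T_4$), so your case analysis conflates two alternatives that must be kept separate.
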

\noindent {\it Proof}   Assume that one of $A_1,A_2,A_3,A_1A_2,A_1A_3,A_2A_3,A_!A_2A_3$ is $\pm K$, where $K$ is the identity or a parabolic.
 We first show that (after acting by some element of $\mathrm{Aut}(F_3)$)  we may assume that 
    $A_1=\pm K$ If $A_2=\pm K$ or $A_3=\pm K$, then using a power of $Q$ we reduce
 to the case where $A_1=\pm K$. If $A_1A_2=\pm K$, then, using the  automorphism determined by $[a_1,a_2,a_3] \mapsto [a_1a_2,a_2,a_3]$ will do this case.
   If we have $A_1A_3=\pm K$ or $A_2A_3=\pm K$, then we use a power of $Q$ to reduce to the case $A_1A_2=\pm K$, and this case follows. If we have $A_1A_2A_3=\pm  K$, then, using an automorphism, we replace $(A_1,A_2,A_3)$ by $(A_1A_2A_3,A_2,A_3)$, and we have done this last case.

Thus we may now suppose that $x_1=\pm 1$, any other case being similar by the above. Then (up to conjugacy) we have $A_1=\begin{pmatrix} a &0\\b&a\end{pmatrix}, a=\pm 1$. 
If we have (a), then   $b\ne 0$.
Now let $A_2=\begin{pmatrix} u&v\\w&x\end{pmatrix}$, so that   for all $n \in \mathbb Z$ we have
  trace$(A_1^nA_2)=a^n(u+x+abnv),$ which forces $v=0$, since $(x_1,x_2,x_3,x_{12},x_{13},$ $x_{23},x_{123}) \in \mathcal P_3$. Thus $A_1$ and $A_2$ are lower-triangular.
 Now by Corollary \ref {c2.3} we see that  trace$(A_2)=2\cos(2\pi p/q), p/q \in \mathbb Q$, and since $\det A_2=1$ it follows that $A_2=\begin{pmatrix} \lambda &0\\c&\overline {\lambda}\end{pmatrix}, \lambda=\cos (2\pi p/q)+i\sin(2\pi p/q).$
  One similarly shows that $A_3$ is    lower-triangular of this form. It follows that $(x_1,x_2,x_3,x_{12},x_{13},x_{23},x_{123}) \in {\rm Image}(\Pi_3)$. This proves (a).

 For (b) we now assume that $A_1=\varepsilon  I_2, \varepsilon=\pm 1$.  Then  
\begin{align*}\tag {4.3}\label {eq43} (x_1,x_2,x_3,x_{12},x_{13},x_{23},x_{123})=(\varepsilon,x_2,x_3,\varepsilon x_2,\varepsilon x_3,x_{23},\varepsilon x_{23}).
\end{align*}
 
 Now the subgroup of ${\rm Aut}(F_3)$ that fixes $a_1 \in F_3$ contains a copy of ${\rm Aut}(F_2)$ that acts on vectors of the   form (\ref {eq43})  just like ${\rm Aut}(F_2)$ acts on the triples $(x_1,x_2,x_{12}) \in \mathbb R^3$, so by Theorem \ref {th1} we have either
 
 \noindent (i) $\langle A_2,A_3\rangle$ is a finite group; or
 
 \noindent  (ii) $(x_2,x_3,x_{23})$ is on a coordinate  axis of $\mathbb R^3$; or
 
  \noindent (iii) $(x_2,x_3,x_{23}) \in {\rm Image}(\Pi_2)$.
  
  If we have (i), then $\langle A_1,A_2,A_3\rangle$ is a finite group, and we  are done.
  
    If we have (iii), then we write $x_2=\cos(\theta_2), x_3=\cos(\theta_3), x_{23}=\cos(\theta_2+\theta_3)$, so that if $\varepsilon=1$, then (\ref {eq43}) becomes
    \begin{align*}
    (x_1,x_2,x_3,x_{12},x_{13},x_{23},x_{123})=&(\cos(0),\cos(\theta_2),\cos(\theta_3),\cos(\theta_2),\cos(\theta_3),\\
    &\qquad \cos(\theta_2+\theta_3),\cos(\theta_2+\theta_3))\in {\rm Image}(\Pi_3).
    \end{align*}
    On the other hand, if $\varepsilon=-1$, then 
    \begin{align*}
    (x_1,x_2,x_3,x_{12},x_{13},x_{23},x_{123})=&(\cos(\pi),\cos(\theta_2),\cos(\theta_3),\cos(\pi+\theta_2),\cos(\pi+\theta_3),\\
    &\qquad \cos(\theta_2+\theta_3),\cos(\pi+\theta_2+\theta_3))\in {\rm Image}(\Pi_3).
    \end{align*}
This does case  (iii).

 If we have (ii), then without loss of generality we may assume that $(x_2,x_3,x_{23})=(0,0,x_{23})$.  Then from (\ref {eq43}) we have
 $$(x_1,x_2,x_3,x_{12},x_{13},x_{23},x_{123})=(\varepsilon,0,0,0,0,x_{23},\varepsilon x_{23}).$$
Then $E(\varepsilon,x_{23},\varepsilon x_{23})=1$ so that  
 $( \varepsilon,0,0,0,0,x_{23},\varepsilon x_{23}) \in \partial \mathcal T_4$.  \qed\medskip

\begin{lemma}\label{lemxgt1} Let $p=(x_1,x_2,x_3,x_{12},x_{13},x_{23},x_{123}) \in \mathcal P_3$.

\noindent (i) Suppose that one of $x_1,x_2,x_3,x_{12},x_{13},x_{23} ,x_{123}$ has absolute value greater than $1$. Then $p$ is on a coordinate  axis of $\mathbb R^7$.

\noindent (ii) Suppose that $\alpha \in {\rm Aut}(F_3)$ with $$(p)\alpha =(y_1,y_2,y_3,y_{12},y_{13},y_{23},y_{123}),$$ where one of  $y_1,y_2,y_3,y_{12},y_{13},y_{23},y_{123}$ has absolute value greater than $1$. Then $p$ is on a coordinate axis of $\mathbb R^7$, and so is $(x_1,x_2,x_3,x_{12},x_{13},x_{23},x_{123})$.
\end{lemma}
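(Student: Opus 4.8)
The plan is to derive (ii) from (i) and to reduce (i) to the single case $|x_1|>1$. First I would note that every one of the seven trace coordinates occurs as the first coordinate of $p\alpha$ for a suitable $\alpha\in\mathrm{Aut}(F_3)$: a power of $Q$ brings $x_2$ or $x_3$ into the first slot; $U$ has $(pU)_1=x_{12}$, and pre-composing with a power of $Q$ (which cyclically permutes $x_{12},x_{23},x_{13}$) also reaches $x_{13}$ and $x_{23}$; and the automorphism $\gamma\colon[a_1,a_2,a_3]\mapsto[a_1a_2a_3,a_2,a_3]$ gives $(p\gamma)_1=\mathrm{trace}(A_1A_2A_3)/2=x_{123}$. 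Since $\mathcal P_3$ is $\mathrm{Aut}(F_3)$-invariant and, by Lemma \ref{lemfix}, so is $V(F)$, for (i) it suffices to prove that a point of $\mathcal P_3\cap V(F)$ cannot have $|x_1|>1$.

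So suppose $|x_1|>1$. Then $\mathrm{trace}(A_1)=2x_1$ has absolute value $>2$, so $A_1$ is diagonalizable with distinct real eigenvalues $\mu,\mu^{-1}$ and $|\mu|>1$; after conjugating the triple, $A_1=\mathrm{diag}(\mu,\mu^{-1})$. Consider the Nielsen automorphism $\beta\colon a_2\mapsto a_1a_2$ (fixing $a_1,a_3$), so $\beta^n\colon a_2\mapsto a_1^na_2$ and $p\beta^n$ is realized by the triple $(A_1,A_1^nA_2,A_3)$; its second coordinate is $\mathrm{trace}(A_1^nA_2)/2$ and its sixth coordinate is $\mathrm{trace}(A_1^nA_2A_3)/2$. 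Since the $\langle\beta\rangle$-orbit of $p$ is finite these are bounded sequences in $n$; as $|\mu|>1$, boundedness of the first forces the two diagonal entries of $A_2$ to vanish, so $A_2$ is anti-diagonal with both off-diagonal entries nonzero (because $\det A_2=1$), and then boundedness of the second forces the off-diagonal entries of $A_3$ to vanish, i.e. $A_3$ is diagonal. Finally the automorphism $a_3\mapsto a_1^na_3$ produces points whose third coordinate is $\mathrm{trace}(A_1^nA_3)/2$; boundedness would force both diagonal entries of $A_3$ to vanish, contradicting $\det A_3=1$. Hence $|x_1|>1$ cannot occur, so no coordinate of any $p\in\mathcal P_3\cap V(F)$ exceeds $1$ in absolute value. (Together with $p\in V(F)$ this makes the hypothesis of (i) vacuous, since on $V(F)$ a point on a coordinate axis has its unique nonzero entry equal to $\pm1$; the real content recorded is $\mathcal P_3\cap V(F)\subseteq[-1,1]^7$.)

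For (ii): $p\alpha\in V(F)$ because $F$ is $\mathrm{Aut}(F_3)$-fixed, and $p\alpha\in\mathcal P_3$ by invariance of $\mathcal P_3$; applying (i) to $p\alpha$ gives the conclusion for $p\alpha$, and since the coordinate-axis points of $V(F)$ form the finite $\mathrm{Aut}(F_3)$-invariant set $\{\pm e_i:1\le i\le 7\}$ (the $e_i$ being the standard basis vectors of $\mathbb R^7$; invariance is checked on $U,P,S,Q$), applying $\alpha^{-1}$ returns a point on a coordinate axis, so $p$ is on a coordinate axis. (I read the two clauses of the stated conclusion as referring to $p\alpha$ and to $p$ respectively.)

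The step I expect to be the main obstacle is the combined bookkeeping of the reduction and of the coordinate transformations under the Nielsen automorphisms: the $\mathrm{Aut}(F_3)$-action on $\mathbb R^7$ is only an anti-homomorphism and mixes the seven coordinates nonlinearly, so one must be careful both that each map used ($a_2\mapsto a_1^na_2$, $a_3\mapsto a_1^na_3$, $\gamma$, and the various $Q^kU$) is genuinely an automorphism of $F_3$ and that the indicated coordinates of the image point are exactly the stated traces.
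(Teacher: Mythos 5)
Your matrix computation is fine on its own terms, but it proves a different statement from Lemma \ref{lemxgt1}, and the difference is exactly where the lemma has content. The lemma assumes only $p\in\mathcal P_3$; it does \emph{not} assume $p\in V(F)$. Compare with Theorem \ref{mainthm} and Proposition \ref{propg3posdef}, which carry the hypothesis $p\in V(F)$ explicitly: its omission here is deliberate, because the conclusion ``$p$ lies on a coordinate axis'' is non-vacuous precisely for points of $\mathcal P_3$ off $V(F)$. For instance $(a,0,0,0,0,0,0)$ with $|a|>1$ lies in $\mathcal F_3\subseteq\mathcal P_3$ (the generators $U,P,S,Q$ visibly carry axis points to axis points, up to sign), yet $F=a^2-1\ne 0$ there. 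Your argument begins by passing to associated matrices $A_1,A_2,A_3$, and such a triple exists only when $F(p)=0$; so for $p\in\mathcal P_3\setminus V(F)$ with $|x_1|>1$ your proof says nothing, and in particular does not show that the other six coordinates vanish, which is the actual assertion of part (i). Declaring the hypothesis vacuous is therefore a misreading: what you have established is the true and related fact that $\mathcal P_3\cap V(F)\subseteq[-1,1]^7$, not the lemma. Part (ii) inherits the same problem, since you only verify invariance of $\{\pm e_i\}\subset V(F)$ rather than of the union of the full coordinate axes, which is what is needed (and what the paper uses).

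The paper's proof avoids realizing the whole septuple: it applies the already-proved $n=2$ classification, Theorem \ref{th1}, to the three sub-triples $(x_1,x_2,x_{12})$, $(x_1,x_3,x_{13})$, $(x_1,x_{23},x_{123})$. Each lies in $\mathcal P_2$, because the copies of ${\rm Aut}(F_2)$ inside ${\rm Aut}(F_3)$ corresponding to $\langle a_1,a_2\rangle$, $\langle a_1,a_3\rangle$, $\langle a_1,a_2a_3\rangle$ act on those three coordinates by the $n=2$ trace maps (polynomials in those coordinates alone), and \emph{every} real triple is realized by a pair in ${\rm SL}(2,\mathbb C)$ (see \S\ref{sec1}), so Theorem \ref{th1} applies unconditionally. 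Since $|x_1|>1$ rules out cases (\ref{itemi}) and (\ref{itemiii}) of that theorem (both force all half-traces into $[-1,1]$), each sub-triple must be on a coordinate axis of $\mathbb R^3$, necessarily the $x_1$-axis, giving $x_2=x_{12}=x_3=x_{13}=x_{23}=x_{123}=0$; part (ii) then follows because ${\rm Aut}(F_3)$ permutes the coordinate axes of $\mathbb R^7$. Your Nielsen-move/boundedness idea is close in spirit to Lemma \ref{l2.2} and Lemma \ref{l27.5} and could be kept as an ingredient at the $n=2$ level, but as written the restriction to $V(F)$ is a genuine gap in a proof of the stated lemma.
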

\noindent{\it Proof}
 For (i) we may assume (as in the proof of Lemma \ref {l27.5}) that $|x_1|>1$. Then using Theorem  \ref {th1}  applied to the point $q=(x_1,x_2,x_{12})\in \mathbb R^3$, we see that $q$ is on a coordinate  axis of $\mathbb R^3$, which shows that $x_2=x_{12}=0.$ The same argument applied to the triples $(x_1,x_3,x_{13})$ and $(x_1,x_{23},x_{123})$ yields $x_3=x_{13}=0, x_{23}=x_{123}=0$, which now shows that $p$ is on a coordinate  axis of  $\mathbb R^7$. This gives (i).

We note  that if $p\in \mathbb R^7$ is on a coordinate axis in $\mathbb R^7$, then so is any 
$(p)\alpha, \alpha \in {\rm Aut}(F_3)$. In fact, ${\rm Aut}(F_3)$ acts transitively on the axes,
so that (ii) follows from (i).\qed\medskip

\section {Binary dihedral groups}

In this section we investigate the connection between binary dihedral groups and the points of
$\mathcal P_3 \cap \mathcal U_i \cap V(F)=\mathcal P_3 \cap \partial \mathcal T_i, 1\le i\le 7$.

A {\it  binary dihedral group} $\mathrm{BD}_{2n}$ of order $4n$ has presentation
$$\langle a,b|a^{2n},b^4,a^n=b^2,a^b=a^{-1}\rangle.$$
It has a faithful representation in $\mathrm{SL}(2,\mathbb C)$  as
$$a=\begin{pmatrix}\varepsilon_{2n}&0\\0&\varepsilon_{2n}^{-1}\end{pmatrix};\,\,\,
b=\begin{pmatrix} 0&1\\-1&0\end{pmatrix}.$$ Here $\varepsilon_{2n}$ is a primitive $2n$th root of unity.  We will identify $\mathrm{BD}_{4n}$ with its image under this representation.
Thus when $A_1=a, A_2=b$ we have $x_1=\cos(2\pi k/2n),x_2=x_{12}=0.$ We call $a,b$   {\it standard generators of $\mathrm{BD}_{2n}$}.

We note that if $n$ is odd, then the matrices $a'=\begin{pmatrix}\varepsilon_{n}&0\\0&\varepsilon_{n}^{-1}\end{pmatrix},
b'=\begin{pmatrix} 0&1\\-1&0\end{pmatrix}$ still generate a binary dihedral group with standard generators $a(b')^2,b'$.

\begin{lemma} \label {lembdn}
(a) If $1\le i \le 7$ and   $p \in \mathcal P_3 \cap \mathcal U_i \cap V(F)$, and $p$ corresponds to the matrices $A_1,A_2,A_3$, then $\langle A_1,A_2,A_3\rangle$ is a binary dihedral group.

(b) If  $\langle A_1,A_2,A_3\rangle$ is a binary dihedral group, then the corresponding point $p \in \mathbb R^7$ is in $\mathcal P_3 \cap \mathcal U_i \cap V(F)$ for some  $1\le i\le 7$.
\end{lemma}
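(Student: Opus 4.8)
The plan is to prove (a) and (b) separately, exploiting the $\mathrm{Aut}(F_3)$-action that permutes the $\mathcal U_i$ transitively (Lemma~\ref{lem34}) so that in part (a) we may reduce to a single case, say $p \in \mathcal U_7$ or $p \in \mathcal U_1$. Since $\mathcal P_3 \cap \mathcal U_i \cap V(F) = \mathcal P_3 \cap \partial\mathcal T_i$ and each $\partial\mathcal T_i$ is a copy of $\partial\mathcal T$ from the $n=2$ case (on which $F$ restricts to $E-1$), the key observation is that a point of $\mathcal U_i$ has four of its seven coordinates equal to zero, so the associated matrix triple $(A_1,A_2,A_3)$ has four of its seven ``half-traces'' vanishing. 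After reducing to one $\mathcal U_i$, I would read off from the explicit shape of $u_i$ that, up to relabelling, two of the matrices among $A_1,A_2,A_3,A_1A_2,\dots$ have zero half-trace, hence are (conjugate to) elements of order $4$ of the form $\begin{pmatrix}0&*\\ *&0\end{pmatrix}$, while the relevant nonzero coordinate forces the remaining generator to be diagonalizable. Concretely: a matrix in $\mathrm{SL}(2,\mathbb C)$ with trace $0$ squares to $-I_2$; if two of the three generators (or suitable words in them, corresponding to the zero positions) have trace $0$, then each inverts a common eigenvector-line structure, and the subgroup they generate, together with the constraint coming from $p \in \mathcal P_3$ (periodicity forces the remaining trace to be $2\cos(2\pi q)$ by the $n=2$ analysis in Lemma~\ref{l2.2} and Corollary~\ref{c2.3}), is forced to be binary dihedral. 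I expect the cleanest route is: conjugate so that one trace-$0$ generator is $b=\begin{pmatrix}0&1\\-1&0\end{pmatrix}$, observe that a second trace-$0$ element in $\langle A_1,A_2,A_3\rangle$ together with $b$ generates a binary dihedral group (any two distinct order-$4$ elements of $\mathrm{SL}(2,\mathbb C)$ squaring to $-I_2$ generate such a group, possibly infinite), then use periodicity to force the ``diagonal part'' to be finite of order dividing some $2n$, so the whole group is $\mathrm{BD}_{2n}$.

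For part (b), suppose $\langle A_1,A_2,A_3\rangle = \mathrm{BD}_{2n}$ with standard generators $a,b$ as above. Every element of $\mathrm{BD}_{2n}$ is either diagonal $\mathrm{diag}(\varepsilon^k,\varepsilon^{-k})$ (trace $2\cos(2\pi k/2n)$) or anti-diagonal $\begin{pmatrix}0&\varepsilon^k\\-\varepsilon^{-k}&0\end{pmatrix}$ (trace $0$). Hence for \emph{any} generating triple $(A_1,A_2,A_3)$ of $\mathrm{BD}_{2n}$, each of the seven products $A_{i_1}\cdots A_{i_k}$ is diagonal or anti-diagonal, so its half-trace is $0$ or $\cos(2\pi m/2n)$ for some integer $m$. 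The subtlety is to show the zero pattern is exactly one of the seven admissible sets of Lemma~\ref{lems7}, i.e. that the vector lies in some $\mathcal U_i$: this is forced because the diagonal/anti-diagonal dichotomy defines a homomorphism $\mathrm{BD}_{2n}\to \mathbb Z/2$, and the seven half-trace positions correspond to the seven nonempty subsets $I\subseteq\{1,2,3\}$, with the half-trace of $A_I$ vanishing precisely when $\sum_{i\in I}$(parity of $A_i$) is odd; the set of $I$ with odd parity is an affine hyperplane complement in $\mathbb F_2^3$ of size $4$, matching Lemma~\ref{lems7}. Finiteness of $\mathrm{BD}_{2n}$ makes every orbit of every element of $\mathrm{Aut}(F_3)$ finite on the trace septuple, giving $p\in\mathcal P_3$; and $p\in V(F)$ because $p$ comes from an actual matrix triple, which by Lemma~\ref{lemfix} (and the remark that $F$ generates the trace ideal, via Horowitz) satisfies $F(p)=0$. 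Combined with the zero pattern, $p \in \mathcal P_3 \cap \mathcal U_i \cap V(F)$.

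The main obstacle I anticipate is part (a): specifically, deducing from ``two of the seven half-traces in a $\mathcal U_i$-position vanish'' that the \emph{whole group} $\langle A_1,A_2,A_3\rangle$ is binary dihedral, rather than merely that two elements generate such a subgroup. One must rule out the third generator contributing something outside the normalizer of the torus — but the point is that if $A_3$ (say) is diagonalizable with finite order (forced by periodicity and Corollary~\ref{c2.3}) and does \emph{not} lie in the diagonal torus preserved by the binary dihedral subgroup, then the group is not closed under the relevant operations unless extra traces become $0$ or $\pm1$, pushing us to a coordinate axis or to a smaller special position; I would handle this by a case analysis on which of the four zero-positions of $\mathcal U_i$ we are in, each time either directly exhibiting $a,b$ as words in $A_1,A_2,A_3$ or deriving a contradiction with $p\notin$ (axis). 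A secondary technical point is the possibility $n=1$ or small $n$, where $\mathrm{BD}_{2n}$ degenerates (e.g. $\mathrm{BD}_2$ is the quaternion group $Q_8$, $\mathrm{BD}_4=Q_{16}$, etc.) — these should be absorbed into the statement since they are still binary dihedral, but one should check the representation is still faithful, which it is for all $n\ge 1$.
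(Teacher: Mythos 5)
Your part (b) is essentially correct (and in fact more detailed than the paper, which dismisses it with ``(b) follows easily''): the diagonal/anti-diagonal parity argument correctly identifies the four forced zero positions with one of the seven sets of Lemma \ref{lems7}, and finiteness of the group plus Lemma \ref{lemfix} gives $p\in\mathcal P_3\cap V(F)$. The problem is part (a), where you yourself flag the decisive step and then do not supply it. Knowing that two (in fact four) of the words $A_3, A_1A_3, A_2A_3, A_1A_2A_3$ have trace zero only tells you that pairs of them generate dihedral-mod-center subgroups; it does not constrain the full group $\langle A_1,A_2,A_3\rangle$, since plenty of non-dihedral finite subgroups of $\mathrm{SL}(2,\mathbb C)$ (e.g.\ the binary octahedral group, whose image in $\mathrm{PSL}(2,\mathbb C)$ is $S_4$) are generated by trace-zero elements. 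Your proposed mechanism --- ``if the third generator is not in the torus then the group is not closed under the relevant operations unless extra traces become $0$ or $\pm1$'' --- is a heuristic, not an argument, and as stated it is not clear it excludes such configurations. What is actually needed, and what the paper does, is to use \emph{all four} vanishing conditions in a normal form computation: take $p\in\mathcal U_1$, so $x_3=x_{13}=x_{23}=x_{123}=0$; conjugate so $A_3=b=\left(\begin{smallmatrix}0&1\\-1&0\end{smallmatrix}\right)$; use $x_{13}=0$ together with a conjugation by an element $\left(\begin{smallmatrix}r&s\\-s&r\end{smallmatrix}\right)$ of the centralizer of $b$ to make $A_1$ diagonal, $A_1=\mathrm{diag}(e^{2\pi iu/v},e^{-2\pi iu/v})$ (the rationality of the angle coming from Lemma \ref{l2.1}(i) applied to $(x_1,x_2,x_{12})\in\partial\mathcal T$, \emph{not} from Lemma \ref{l2.2}/Corollary \ref{c2.3}, which assume the point is off $E_1$ and so do not apply here --- this is a secondary slip in your write-up); then $x_{23}=0$ gives $f=g$ for $A_2=\left(\begin{smallmatrix}e&f\\g&h\end{smallmatrix}\right)$, and $x_{123}=0$ gives $g=fe^{4\pi iu/v}$, so either $f=g=0$ (whence $A_1,A_2$ diagonal of finite order and the group is binary dihedral) or $A_1=\pm I_2$, which reduces to two trace-zero elements generating a finite (by rationality of the angle) dihedral-mod-center group. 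Without this computation, or an equally explicit substitute, your case analysis in (a) remains a plan rather than a proof.
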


\noindent {\it Proof} Assume without loss of generality that $p\in \mathcal U_1 \cap V(F)$, so that we have
$x_3=x_{13}=x_{23}=x_{123}=0$. Since $x_3=0$ we see that $A_3$ has characteristic polynomial $x^2+1$, so that $A_3$ has order $4$ and is conjugate to $b$. We thus may assume that $A_3=b$. 

Since $p=(x_1,x_2,0,x_{12},0,0,0) \in \partial \mathcal T_1 \cap P_3$ we see  
that $E(x_1,x_2,x_{12})=1$ so that by Lemma \ref {l2.1}  we have 
$x_1=\cos(2\pi u/v), x_2=\cos(2\pi p/q)$. Let $A_1=\begin{pmatrix} a&b\\c&d\end{pmatrix}$, where we must have
$$a+d=2\cos(2\pi u/v),\quad ad-bc=1,\quad c-b=0.$$ The last equation comes from the fact that $x_{13}=0$.
Solving these equations gives
$$a=2\cos(2\pi u/v)-d,\quad b=c=\sqrt{ -d^2+2d\cos(2\pi u/v)  -1}.$$
Now any matrix of the form $R=\begin{pmatrix} r&s\\-s&r\end{pmatrix}, r,s \in \mathbb C,$ commutes with $b=A_3$, and so we can replace $A_1$ by $R^{-1}A_1R$. One then solves the $(1,2)$ entry of this matrix for $s$.
This has the effect of making  $R^{-1}A_1R$ a diagonal matrix with trace
 $2\cos(2\pi u/v)$. It follows that  $R^{-1}A_1R=\begin{pmatrix} \exp{2\pi i u/v}&0\\0& \exp{-2\pi i u/v}\end{pmatrix}.$ We assume that $A_1$ is this latter matrix.

Now let  $A_2=\begin{pmatrix} e&f\\g&h\end{pmatrix}$, so that we must have
$$e+h=2\cos(2\pi p/q),\quad eh-fg=1,\quad f=g,$$ where the last equation comes from the fact that
$x_{23}=0$. 

Then the condition $x_{123}=0$ gives $g=f\exp{4\pi i u/v}$. This, with $f=g$, gives either  (i) $f=g=0$; or (ii) $v=1,2$. If we have (i), then $A_2$ is also diagonal of finite order and we are done.
If we have (ii), then $x_1=\pm 1$, and $A_1=\pm I_2$; thus $\langle A_1,A_2,A_3\rangle$ is a binary dihedral group.
This proves (a), and (b) follows easily.\qed\medskip

 
  

\section{A review of a proof for $n=2$}

We briefly review one of the proofs of Theorem \ref {th1} from \cite {Dub}. 
In \cite {Dub}, 
for the case $n=2$,  the authors define 
$$g=\begin{pmatrix} 2 &2x_1&2x_{12}\\
2x_1&2&2x_2\\
2x_{12}&2x_2&2
\end{pmatrix},$$ 
showing that for $(x_1,x_2,x_{12})$ in the interior of $\mathcal T$ the matrix $g$ is positive definite. This uses Lemma \ref {lemvinberg} and the fact that
$$\det(g)=8(1-x_1^2-x_2^2-x_{12}^2+2x_2x_2x_{12})=8(1-E(x_1,x_2,x_{12})).$$
This then enables them to define a reflection subgroup $\langle R_1, R_2, R_3\rangle$.
Here
\begin{align*}&
R_1=\begin{pmatrix}
-1&-2x_1&-2x_{12}\\
0&1&0\\
0&0&1\end{pmatrix};\,\,\, R_2=\begin{pmatrix} 1&0&0\\-2x_1&-1&-2x_2\\0&0&1\end{pmatrix};\\&\qquad \qquad \qquad  R_3=\begin{pmatrix} 1&0&0\\
0&1&0\\
-2x_{12}&-2x_2&-1\end{pmatrix}.
\end{align*}
These reflections have the property that 
$$R_i^TgR_i=g, \text { for } i=1,2,3.$$
The authors of \cite {Dub}  then note that $\langle R_1,R_2,R_3\rangle$ is a subgroup of the orthogonal group (of $3 \times 3$ real matrices) determined by 
the positive definite form $g$.

They also show that the $x_1,x_2,x_{12}$ have the form $\cos (\pi p_u/q)$ for $p_u,q \in \mathbb Z,u=1,2,12$. This enables them to define the above matrices over $\mathbb Q[\zeta_q+1/\zeta_q]=\mathbb Q[\cos(\pi /q)]$. Using the Galois group $\mathcal G$ of $\mathbb Q[\cos(\pi /q)]/\mathbb Q$ it is clear that 
for $\gamma \in \mathcal G$ each $\gamma(R_i)$ fixes the form $\gamma(g)$. Take the direct sum 
$$\bigoplus (g)=\bigoplus_{\gamma \in \mathcal G} \gamma(g),$$ and consider the direct product group
$$\bigoplus(\langle R_1,R_2,R_3\rangle)=\bigoplus_{\gamma \in \mathcal G} \gamma(\langle R_1,R_2,R_3\rangle).$$ 

From the above it  
is clear that $\bigoplus(g)$ is a positive definite form on $\mathbb R^{3N}, N=|\mathcal G|$, fixed by the  group  $\bigoplus(\langle R_1,R_2,R_3\rangle)$; further, as the group $\langle R_1,R_2,R_3\rangle$
can be defined over the ring of integers $\mathcal O_q$ of $\mathbb Q[\zeta_q]$,
there is a lattice in $\mathbb R^{3N}$ that is fixed by $\bigoplus(\langle R_1,R_2,R_3\rangle)$ (namely $\mathcal O_q^{3N}$).
Thus $\bigoplus(\langle R_1,R_2,R_3\rangle)$ is a crystallographic group. Since $\bigoplus(\langle R_1,R_2,R_3\rangle)$ is a discrete subgroup of the compact   orthogonal group of $\mathbb R^{3N}, N=|\mathcal G|$, we see that $\bigoplus(\langle R_1,R_2,R_3\rangle)$ is finite, and so $\langle R_1,R_2,R_3\rangle$ is also finite.

This then limits the possibilities for $\langle R_1,R_2,R_3\rangle$ to a finite number of cases, using Coxeter's classification of finite reflection groups \cite {Cox}. 
For each case that has trace triples inside $\mathcal T$  we obtain a finite group $\langle A_1,A_2\rangle.$ This gives the result when $n=2$.
\medskip

\section {Reflection groups for $n=3$}

Given any symmetric $n \times n$ real matrix $g=(g_{i,j})$ with $2$s on the diagonal we can form reflections $R_1,\dots, R_n\in {\rm GL}(n,\mathbb R)$ where $R_i$ is the identity, except that the $i$th row of $R_i$ is 
$$-g_{i,1},-g_{i,2},\dots,-g_{i,i-1},-1,-g_{i,i+1},\dots,-g_{i,n}.$$ We say that {\it the $R_i$ are formed using $g$}.
 Then one can check that $R_1,\dots,R_n$ fix $g$ in the sense that $R_i^TgR_i=g$ for $i=1,\dots,n$.

In general, given matrices $A_1,\dots ,A_n \in \mathrm{SL}(2,\mathbb C)$ and letting $A_0=I_2,$  we can form the $(n+1)\times (n+1)$ symmetric matrix 
(with rows and columns indexed by $0,1,\dots,n$)
$$g_n=\left (\frac { {\rm trace}(A_iA_j^{-1})}{2}\right ).$$
Thus for example, when $n=2,$ we get the matrix
$$g_2=g_2(A_1,A_2)= \left( \begin {array}{ccc} 1&{  x_1}&{  x_2}\\ \noalign{\medskip}{
  x_1}&1&2 {  x_1} {  x_2}-{  x_{12}}\\ \noalign{\medskip}{  x_2}
&2 {  x_1} {  x_2}-{  x_{12}}&1\end {array} \right).
$$
 One checks that  
$$\det (g_2)=1-x_1^2-x_2^2-x_{12}^2+2x_1x_2x_{12}=1-E(x_1,x_2,x_{12}),$$
and it easily follows that $g_2$ is positive definite on the interior of $\mathcal T$.

For $n=3$, and $A_1,A_2,A_3$ associated to  $p=(x_1,x_2,x_3,x_{12},x_{13},x_{23},x_{123})$, we have:
$$g_3=g_3(p)= \left( \begin {array}{cccc} 1&{  x_1}&{  x_2}&{  x_{3}}
\\ \noalign{\medskip}{  x_1}&1&2 {  x_1} {  x_2}-{  x_{12}}&2 {
  x_{1}} {  x_3}-x_{13}\\ \noalign{\medskip}{  x_2}&2 {  x_1} 
{  x_2}-{  x_{12}}&1&2 {  x_{2}} {  x_3}-{  x_{23}}
\\ \noalign{\medskip}{  x_{3}}&2 {  x_{1}} {  x_3}-{  x_{13}}&2 {  
x_{2}} {  x_3}-{  x_{23}}&1\end {array} \right).$$
Our goal will be to show that $g_3$ is close to being positive-definite on $(-1,1)^7$.
Here 
\begin{align*}&
\det (g_3)=1-x_{23}^{2}
- x_{13}^{2}- x_{3}^{2}- x_{12}^{2}-
 x_2^{2}-  x_{1}^{2}+2 {  x_{23}} {  x_{3}} {  x_2}+2 {  
x_2} {  x_{1}} {  x_{12}}\\ &
\qquad +2 {  x_{3}} {  x_{1}} {  x_{13}}+  x_{1}^{
2} x_{23}^{2}+  x_{3}^{2}  x_{12}^{2}+ x_2^{2}  
x_{13}^{2}+4   x_{3}^{2}  x_2^{2} x_{1}^{2}+2 {  x_2} {
  x_{1}} {  x_{23}} {  x_{13}}\\&\qquad 
+2 {  x_{12}} {  x_2} {  x_{13}} {  
x_{3}}-4   x_{1}^{2}{  x_{23}} {  x_{3}} {  x_2}-4   x_{3}^{2}{
  x_{12}} {  x_2} {  x_{1}}-4   x_2^{2}{  x_{3}} {  x_{1}} {
  x_{13}}\\&\qquad +2 {  x_{1}} {  x_{23}} {  x_{3}} {  x_{12}}-2 {  x_{12}} {
  x_{13}} {  x_{23}}.
\end{align*}
\begin{lemma}\label{lempd} On $(-1,1)^7 \cap V(F)$ the function $\det (g_3)$ is positive, except on a $5$-dimensional subset of $V(F)$, where it is zero.
\end{lemma}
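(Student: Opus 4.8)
The plan is to deduce the whole statement from one polynomial identity. Put $L := x_1x_{23} + x_2x_{13} + x_3x_{12} - 2x_1x_2x_3$. The displayed expression for $\det(g_3)$ does not involve $x_{123}$, whereas $F$, viewed as a polynomial in the variable $x_{123}$, is monic of degree two; in fact
\[
F = (x_{123}-L)^2 - \det(g_3),\qquad\text{equivalently}\qquad \det(g_3) = (L-x_{123})^2 - F .
\]
The first step is to verify this identity by a direct expansion — a finite symbolic check (easily done with \cite{Ma}), of the same nature as the identity $\det(g_2) = 1-E$ used in the case $n=2$. Conceptually it is the expected Cayley--Menger phenomenon: the two roots of $F$ in $x_{123}$ are $\mathrm{trace}(A_1A_2A_3)/2$ and $\mathrm{trace}(A_1A_3A_2)/2$, whose sum is $2L$, so $L-x_{123}$ is half the difference of these two half-traces, and its square is the Gram-type determinant $\det(g_3)$.

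Granting the identity, positivity is immediate: on $V(F)$ we have $F=0$, hence $\det(g_3) = (L-x_{123})^2 \ge 0$ everywhere on $V(F)$ (not merely inside the cube), with equality precisely on the set $W := V(F)\cap\{x_{123}=L\}$. Thus $\det(g_3)>0$ on $\bigl((-1,1)^7\cap V(F)\bigr)\setminus W$ and $\det(g_3)=0$ on $(-1,1)^7\cap W$, and it only remains to identify the dimension of $W$. Reducing $F$ modulo $(x_{123}-L)$ gives $-\det(g_3)$, a nonzero polynomial in $x_1,\dots,x_{23}$ alone, so $V(F)$ is not contained in the hyperplane $\{x_{123}=L\}$; hence $\dim W\le 5$, and the lower bound (together with the fact that the zero locus actually meets the open cube in a $5$-dimensional set) will come from the explicit family constructed below.

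To place a $5$-dimensional piece of $W$ inside $(-1,1)^7$ I would use the quaternionic picture. For $A_1,A_2,A_3\in\mathrm{SU}(2)\subset\mathbb{H}\cong\mathbb{R}^4$ one checks $(g_3)_{ij} = \mathrm{trace}(A_iA_j^{-1})/2 = \langle A_i,A_j\rangle_{\mathbb{R}^4}$ with $A_0=I_2$, so $g_3$ is the Gram matrix of the four unit vectors $I_2,A_1,A_2,A_3$, and $\det(g_3)=0$ exactly when these are linearly dependent in $\mathbb{H}$. Taking $\langle A_1,A_2\rangle$ irreducible, so that $I_2,A_1,A_2$ span a $3$-plane $\Pi\subset\mathbb{H}$, and letting $A_3$ run over the $2$-sphere $\Pi\cap\mathrm{SU}(2)$, yields a $3+2 = 5$-dimensional family of points of $W$ whose seven half-traces all lie in $[-1,1]$, and in $(-1,1)$ for generic choices; this family lies in $W\cap(-1,1)^7$, so $W$ is $5$-dimensional, which completes the proof. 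The only real work is the verification of the polynomial identity — which I expect to be entirely routine — and the exhibition of this $5$-dimensional sub-family inside the cube; the latter is the one genuinely geometric point, and working through the $\mathrm{SU}(2)$/quaternion model is what makes both the linear-dependence condition and the bound on the half-traces transparent.
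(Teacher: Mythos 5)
Your proof is correct and is essentially the paper's own argument: your identity $\det(g_3)=(L-x_{123})^2-F$ is exactly the paper's identity $\det(g_3)+F=G^2$ with $G=2x_1x_2x_3+x_{123}-x_3x_{12}-x_1x_{23}-x_2x_{13}=-(L-x_{123})$, and positivity on $V(F)$ away from the zero set follows in the same way. The only difference is that the paper simply asserts that $V(F)\cap V(G)$ is $5$-dimensional (a computational check), whereas you justify both the upper bound (elimination of $x_{123}$, reducing to the nonzero polynomial $\det(g_3)$ in six variables) and the lower bound inside $(-1,1)^7$ (the $\mathrm{SU}(2)$/quaternion Gram-matrix family) — a welcome but inessential elaboration of the same approach.
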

\noindent{\it Proof}
 We just need to note that
$$\det(g_3)+F=(2x_1x_2x_3+x_{123}-x_3x_{12}-x_1x_{23}-x_2x_{13})^2,$$
so that letting $$G=2x_1x_2x_3+x_{123}-x_3x_{12}-x_1x_{23}-x_2x_{13},$$ we see that  $V(F) \cap V(G)$ has dimension $5$, and the result follows.\qed \medskip

\noindent {\bf Remark}
For $n \ge 4$ the matrix $g_n$ is an $(n+1)\times (n+1)$ matrix where $n+1\ge 5$, and so by \cite [Theorem 2] {hk} we see that $\det(g_n)=0$. (The result of \cite {hk}   says that if $m_1,\dots,m_k,M_1,\dots,M_k\in {\rm SL}(2,\mathbb C), \varepsilon_1,\dots,\varepsilon_k \in \{\pm 1\},$ and $D=({\rm trace}(m_iM_j^{\varepsilon_i}))$ with $k \ge 5$, then $\det(D)=0$.) 
Thus there is no analogue of Lemma \ref {lempd} when $n \ge 4$, and so 
this method of proof will not work for $n \ge 4$. \medskip

Now given $A_1,A_2 ,A_3$ with $x_1={\rm trace}(A_1)/2$ etc, and $g=g_3, g=(g_{ij}),$ we define
\begin{align*}&
R_1=\begin{pmatrix}
-1&-2g_{12}&-2g_{13}&-2g_{14}\\
0&1&0&0\\
0&0&1&0\\0&0&0&1\end{pmatrix};\,\,\,
R_2=\begin{pmatrix} 1&0&0&0\\-2g_{21}&-1&-2g_{23}&-2g_{24}\\0&0&1&1\\0&0&0&1\end{pmatrix};\\&
R_3=\begin{pmatrix}
1&0&0&0\\
0&1&0&0\\
-2g_{31}&-2g_{32}&-1&-2g_{34}\\0&0&0&1\end{pmatrix};\,\,\,
R_4=\begin{pmatrix}
1&0&0&0\\
0&1&0&0\\
0&0&1&0\\-2g_{41}&-2g_{42}&-2g_{43}&-1\end{pmatrix}.
\end{align*} 
Thus  $R_1,R_2,R_3,R_4$ are formed  using $2g_3$, a real symmetric matrix with $2$s on the diagonal. 
Thus $g_3$ is fixed by any element of the group
$\langle R_1,R_2,R_3,R_4\rangle$: for $R \in \langle R_1,R_2,R_3,R_4\rangle$ we have
$R^Tg_3R=g_3$. 

\begin {proposition} \label {propg3posdef} Assume that  $p \in V(F) \cap (-1,1)^7 \cap \mathcal P_3, p \notin V(G)$. Let the associated matrices be $A_1,A_2,A_3$.
Then  either 

\noindent (i) $p \in \mathcal U_i$ for some $1\le i\le 7,$ or 

\noindent (i) $\langle A_1,A_2,A_3\rangle$ is finite, or  

\noindent (iii) $p \in {\rm Image}(\Pi_3),$ or

\noindent (iv)  the matrix $g_3$ is positive-definite at $p$.
\end{proposition}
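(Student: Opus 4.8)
The plan is to apply Sylvester's criterion to the real symmetric matrix $g_3=g_3(p)$. Indexing its rows and columns by $0,1,2,3$ as in \S7, the leading $1\times1$, $2\times2$ and $3\times3$ principal minors of $g_3$ are $1$, $1-x_1^2$ and $\det g_2(A_1,A_2)=1-E(x_1,x_2,x_{12})$, while by the identity in the proof of Lemma \ref{lempd} together with $F(p)=0$ its full determinant equals $G(p)^2$. As $p\in(-1,1)^7$ we have $1>0$ and $1-x_1^2>0$, and as $p\notin V(G)$ we have $G(p)^2>0$; hence $g_3$ is positive definite at $p$ precisely when $E(x_1,x_2,x_{12})<1$, which is conclusion (iv). So assume $E(x_1,x_2,x_{12})\ge1$ and write $q=(x_1,x_2,x_{12})$; if two entries of $q$ vanished then $E(q)$ would be the square of the third entry, which is $<1$, a contradiction, so $q$ is an admissible triple.

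Next I would transfer the problem to the $n=2$ picture for $q$. The automorphism $U$ acts on the coordinates $x_1,x_2,x_{12}$ of $p$ exactly as $\sigma_1$ acts on a trace triple, and the Nielsen automorphism $N\colon[a_1,a_2,a_3]\mapsto[a_1,a_1^{-1}a_2,a_3]$ (an element of $\mathrm{Aut}(F_3)$, hence a word in $U,P,S,Q$) acts on those coordinates exactly as $\sigma_2$; thus $\langle U,N\rangle$ acts on $x_1,x_2,x_{12}$ exactly as $\langle\sigma_1,\sigma_2\rangle\cong B_3/Z(B_3)$ acts on trace triples, and since $p\in\mathcal P_3$ this gives $q\in\mathcal P_2$. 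If $E(q)>1$, Lemma \ref{lemvinberg} yields $\beta\in B_3$ such that some coordinate of $q\beta$ has absolute value greater than $1$; lifting $\beta$ to the corresponding word $\tilde\beta$ in $U,N$, the coordinates $x_1,x_2,x_{12}$ of $(p)\tilde\beta$ form $q\beta$, so $(p)\tilde\beta$ has a coordinate of absolute value greater than $1$, and Lemma \ref{lemxgt1}(ii) then places $p$ on a coordinate axis of $\mathbb R^7$; since each such axis lies inside some $\mathcal U_i$, this is conclusion (i). If instead $E(q)=1$, then $\det g_2(A_1,A_2)=0$, so $A_1$ and $A_2$ share an eigenvector and, using $q\in E_1\cap\mathcal P_2$ together with Lemma \ref{l2.1}(ii) and Theorem \ref{th1}, are up to conjugacy simultaneously lower-triangular with roots-of-unity diagonal entries; a further case analysis of $A_3$ — via Theorem \ref{th1} applied to the $\mathcal P_2$-triples $(x_1,x_3,x_{13})$ and $(x_2,x_3,x_{23})$, and via Lemmas \ref{l27.5} and \ref{l2.133}(ii) — then yields conclusion (i), (ii) or (iii). (In fact Cauchy interlacing rules the case $E(q)=1$ out entirely: since $\det g_3>0$ and $g_3$ is not positive definite it has signature $(2,2)$, and because the leading $2\times2$ block of $g_3$ is positive definite, interlacing forces $\det g_2(A_1,A_2)<0$ strictly; so whenever $g_3$ is not positive definite the "$E(q)>1$" branch applies and the conclusion is (i).)

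The first paragraph is routine once Lemma \ref{lempd} is in hand, positive definiteness of $g_3$ being a short Sylvester computation. The first point needing care is the matching of the braid-group action of Lemma \ref{lemvinberg} with the hypothesis of Lemma \ref{lemxgt1}(ii): one must realize the $B_3$-action on the triple $q$ by an honest subgroup of $\mathrm{Aut}(F_3)$ acting on all of $\mathbb R^7$ — the device already used in the proofs of Lemmas \ref{l27.5} and \ref{lemxgt1} — so that a coordinate of absolute value greater than $1$ for $q$ produces one for $p$. I expect the main obstacle, and the place where conclusions (ii) and (iii) genuinely arise, to be the degenerate locus $E(x_1,x_2,x_{12})=1$, where $A_1$ and $A_2$ already share an eigenvector: this has to be unwound through the by-hand analysis of triangular triples with roots-of-unity diagonals using Theorem \ref{th1}, Lemma \ref{l2.133} and Lemma \ref{l27.5} — unless one dispatches it at once by the interlacing remark above, leaving (i) as the only outcome of the non-positive-definite case.
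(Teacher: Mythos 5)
Your proof is correct, and in one essential respect it takes a genuinely different route from the paper's. The common skeleton is identical: Sylvester's criterion for $g_3$ with leading minors $1$, $1-x_1^2$, $1-E(x_1,x_2,x_{12})$ and, via the identity $\det g_3+F=G^2$ together with $F(p)=0$, full determinant $G(p)^2>0$; and, when $E(x_1,x_2,x_{12})>1$, Lemma \ref{lemvinberg} transported into $\mathrm{Aut}(F_3)$ by lifting $\sigma_1,\sigma_2$ to $U,N$ (you check the admissibility hypothesis, which the paper leaves implicit). In that branch you finish with Lemma \ref{lemxgt1}(ii), putting $p$ on a coordinate axis and hence in some $\mathcal U_i$ (conclusion (i)), where the paper instead derives a contradiction through Lemma \ref{l2.2}; the two outcomes are interchangeable, since the case is in fact vacuous. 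The real divergence is at the degenerate locus $E(x_1,x_2,x_{12})=1$: the paper disposes of it with Lemma \ref{lemImPi2}(ii) (the $\mathrm{Aut}(F_2)$-orbit of $(x_1,x_2,x_{12})$ contains a point with an entry $1$) followed by Lemma \ref{l27.5}, and that is precisely where conclusions (ii) and (iii) enter its proof. Your main-line treatment of this case (``a further case analysis of $A_3$'') is only a sketch and, taken alone, would be a gap; but your parenthetical Cauchy-interlacing observation closes it completely and correctly: the leading $2\times2$ block is positive definite, so by interlacing $g_3$ has at least two positive eigenvalues, $\det g_3=G(p)^2>0$ then forces signature $(2,2)$ if $g_3$ is not positive definite, and a second interlacing step gives $1-E(x_1,x_2,x_{12})<0$ strictly — so $E=1$ cannot occur off $V(G)$. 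This is a genuine simplification: under the stated hypotheses the only alternative to (iv) is the $E>1$ branch, so conclusions (ii) and (iii) need never be produced here, whereas the paper's route keeps the $E=1$ case and pays for it with Lemmas \ref{lemImPi2} and \ref{l27.5}. I would promote the interlacing remark from a parenthesis to the actual argument and delete the sketched $E=1$ analysis.
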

\noindent {\it Proof} For  positive-definiteness we just check the usual  conditions (Sylvester's criterion) on the determinants of the various $k \times k$ principal submatrices of $g_3$, $k=1,2,3,4$. The case $k=1$ is clear.  

When $k=2$ the principal $2\times 2$ sub-matrix is $\begin{pmatrix} 1&x_1\\x_1&1\end{pmatrix}$, which has determinant $1-x_1^2>0$, since $p \in (-1,1)^7$. 

When $k=3$ the principal $3\times 3$ sub-matrix with rows and columns with indices $0,1,2$ has determinant $1-E_1(x_1,x_2,x_{12})$. 
Now  if $1-E_1(x_1,x_2,x_{12})<0$, then
$E(x_1,x_2,x_{12})>1$, and  using Lemma \ref {lemvinberg} we see that there is a point $p'$ in the $\mathrm{Aut}(F_3)$-orbit of $p$ such that $p'$ has a coordinate that 
is greater than $1$ in absolute value. We can assume that $p'=(p_1,p_2,\dots,p_{123})$ where $|p_1|>1$. But 
Lemma \ref {l2.2} shows that $A_1$ has finite order, so that $|p_1|\le 1$, a contradiction.

On the other hand,  if $1-E_1(x_1,x_2,x_{12})=0$, then Lemma \ref {lemImPi2} tells us that $(x_1,x_2,x_{12})$ has an element  in its ${\rm Aut}(F_2)$-orbit that has  a $1$ as one of its entries. It follows that $p$ has an element in its  ${\rm Aut}(F_3)$-orbit that has  a $1$ as one of its entries. Thus by Lemma \ref {l27.5} we see that either $p \in \mathcal U_i$, for some $1\le i\le 7,$ or $\langle A_1,A_2,A_3\rangle$ is finite. This does the case $k=3$.

 For the $k=4$ case we note that since $p \notin V(G)$, Lemma \ref {lempd} gives  $\det(g_3)>0$.\qed\medskip

The above shows that either we have what we want for Theorem \ref {mainthm}  (namely that $p \in \mathrm{Image}(\Pi_3)$, or $p$ is on an axis, or $\langle A_1,A_2,A_3\rangle$ is finite, or $p \in \mathcal T_i$), or $G(p)=0$, or
$g_3(p)$ is positive-definite.

Now we show how  to deal with the fact that on the set $V(F) \cap V(G)$ the form $g_3$ is not positive definite.

We recall
the
{\it Cholesky decomposition}: the (real) matrix $M$ is positive definite if and only if there exists a (real) unique lower triangular matrix $L$, with real and strictly positive diagonal elements, such that $M = LL^T$.
For the matrix $M=g_3$ we find that $L=L(p)$ has the form
\begin{align*}
\tag {7.1}
\begin{pmatrix} 1&0&0&0\\
x_1&\sqrt{1-x_1^2}&0&0\\
x_2&\frac {x_1x_2-x_{12}}{\sqrt{1-x_1^2}}&
\frac {\sqrt { (1-x_1^2)(1-E(x_1,x_2,x_{12}) )} } { x_1^2-1  }&0\\
x_3&\frac {x_1x_3-x_{13}}{\sqrt{1-x_1^2}}& 
\frac {W   } { \sqrt { (1-x_1^2)(1-E(x_1,x_2,x_{12}) )  }}
&
\frac { \sqrt{ (1-E(x_1,x_2,x_{12}) ) \det (g_3)        }   } {E(x_1,x_2,x_{12})-1   }
\end{pmatrix},
\end{align*} where $W=-x_1x_2-x_1x_3x_{12}+x_{12}x_{13}-x_1x_2x_{13}+2x_1^2x_2x_3-x_1^2x_{23}+x_{23}.$

We note that each element whose square root is taken is non-negative, since we can assume that $ x_1^2<1, E(x_1,x_2,x_{12})<1$ and $\det (g_3) \ge 0$. This again reduces the question of whether $g_3$ is positive definite to the question of whether $G$ is non-zero.

Now we see what happens if we replace $A_1,A_2,A_3$ by $\alpha(A_1),\alpha(A_2),\alpha(A_3), \alpha \in \mathrm{Aut}(F_3)$. Suppose that 
$$(p)\alpha=(x_1,x_2,x_3,x_{12},x_{13},x_{23},x_{123})\alpha=
(y_1,y_2,y_3,y_{12},y_{13},y_{23},y_{123}).$$  Then $L((p)\alpha)$ is given by (7.1) with $y_I$ replacing $x_I$. Of course  $\langle A_1,A_2,A_3\rangle=\langle \alpha(A_1),\alpha(A_2),\alpha(A_3)\rangle$. Further, the positivite-definitenes of $\det g_3((p)\alpha)$ now depends on whether 
$G((p)\alpha)=G(y_1,y_2,y_3,y_{12},y_{13},y_{23},y_{123})$ is non-zero or not. If it is non-zero, then  $g_3((p)\alpha)$ is positive-definite, which is what we want.

Thus the `bad' case is if  $G((p)\alpha)$ is zero for all $\alpha \in \mathrm{Aut}(F_3)$.
We thus look at the smallest ideal that contains $G$ and $F$ and is ${\rm Aut}(F_3)$-invariant. A Magma computation
    shows that this ideal is $\mathcal X$. Thus in this (bad) case $p \in \mathcal X$, and we are done by Lemma \ref {l2.133}.
%
    Thus we have part (a) of:
    
\begin {proposition} \label {propg3posdef2}  Assume that $p \in \mathcal P_3$, where $p \in V(F) \cap (-1,1)^7,$ $p$ is not on a coordinate axis or in any of the $\mathcal \partial T_i, 1\le i\le 7$, and   $p \not\in {\rm Image}(\Pi_3)$. Let $R_1,R_2,R_3,R_4$ be the reflections  formed from $2g_3(p)$.
Then

\noindent (a)  the  matrix $g_3(p)$ is positive-definite at $p$;

\noindent (b)   $\langle R_1,R_2,R_3,R_4\rangle $ is a finite reflection group.
\end{proposition}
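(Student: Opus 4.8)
Part (a) has essentially been established already: by Proposition \ref{propg3posdef}, for a $p$ as in the hypotheses the only way $g_3(p)$ can fail to be positive-definite is $G(p)=0$; but then the Cholesky computation (7.1), together with the fact that the smallest ${\rm Aut}(F_3)$-invariant ideal containing $F$ and $G$ equals $\mathcal X$, forces $p\in V(\mathcal X)\cap(-1,1)^7$, hence $p\in{\rm Image}(\Pi_3)$ by Lemma \ref{l2.133}(ii), contrary to hypothesis. More precisely, passing to the ${\rm Aut}(F_3)$-orbit, either some translate $(p)\alpha$ has $G((p)\alpha)\neq 0$ — and then $g_3((p)\alpha)$ is positive-definite, so we rename that translate $p$ — or $G$ vanishes on the whole orbit, the excluded case. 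So the substance is (b), and we may assume henceforth that $p\notin V(G)$, so in particular $\det g_3(p)>0$.

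For (b) the plan is to reproduce, for $n=3$, the crystallographic-group argument reviewed in \S 6. The first step is to show that every coordinate $x_I$ of $p$ has the form $\cos(2\pi r_I)$ with $r_I\in\mathbb Q$. Since $p\in\mathcal P_3$ is not on an axis, the ${\rm Aut}(F_3)$-analogue of Lemma \ref{l2.2} — obtained by restricting to a copy of ${\rm Aut}(F_2)$ fixing one of $a_1,a_2,a_3$, exactly as in the proof of Lemma \ref{lemxgt1} — shows $A_i$ has finite order, and applying this after the automorphisms $a_2\mapsto a_1a_2$, $a_3\mapsto a_1a_3$, and so on (the $n=3$ version of Corollary \ref{c2.3}) shows $A_iA_j$ and $A_1A_2A_3$ have finite order too; the degenerate sub-cases (a shared eigenvector, a parabolic, an identity entry) are ruled out because $p$ is not on an axis, not in any $\partial\mathcal T_i$ (Lemmas \ref{l27.5}, \ref{lembdn}), and not in ${\rm Image}(\Pi_3)$. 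Hence all seven $x_I$, and therefore every entry $g_{ij}={\rm trace}(A_iA_j^{-1})/2=2x_ix_j-x_{ij}$ of $g_3$, lies in the ring of integers $\mathcal O$ of the totally real field $k=\mathbb Q(\zeta_m+\zeta_m^{-1})$ for a common $m$. Consequently the reflections $R_1,R_2,R_3,R_4$ have entries in $\mathcal O$, so $W=\langle R_1,R_2,R_3,R_4\rangle$ preserves both the positive-definite form $g_3$ and the lattice $\mathcal O^4$.

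Next I would pass to the Galois direct sum. For each of the $N=[k:\mathbb Q]$ embeddings $\gamma\colon k\hookrightarrow\mathbb R$ one has reflections $\gamma(R_i)$ preserving the form $\gamma(g_3)$; embedding $\mathcal O^4\hookrightarrow\mathbb R^{4N}$ through all the $\gamma$ at once produces a genuine full-rank (hence discrete) lattice $\Lambda$, preserved by $\bigoplus_\gamma\gamma(W)$. If $\bigoplus_\gamma\gamma(g_3)$ is positive-definite on $\mathbb R^{4N}$, then $\bigoplus_\gamma\gamma(W)$ is simultaneously a subgroup of the compact orthogonal group of that form and a stabiliser of $\Lambda$, hence a discrete subgroup of a compact group, hence finite; projecting to the $\gamma={\rm id}$ factor shows $W$ itself is finite, which is statement (b). (It is then a finite real reflection group, which is what lets us bring in Coxeter's classification in the subsequent sections.)

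The main obstacle is thus to prove that $\gamma(g_3)$ is positive-definite at $\gamma(p)$ for every real embedding $\gamma$. Since $\det g_3(p)>0$, for every $\gamma$ we have $\det\gamma(g_3)(\gamma(p))=\gamma(\det g_3(p))\neq 0$ (as $\gamma$ is injective on $\mathcal O$), so each $\gamma(g_3)(\gamma(p))$ is nondegenerate; it remains to exclude indefinite signature. Here the leverage is that the ${\rm Aut}(F_3)$-action on $\mathbb R^7$ is by polynomials over $\mathbb Q$, that $F$, $G$, $\mathcal X$, $V(\mathcal X)$, the coordinate axes and ${\rm Image}(\Pi_3)$ are defined over $\mathbb Q$ while the family $\{\mathcal U_i\}$ is permuted, and that $\gamma$ permutes the numbers $(\zeta_m^j+\zeta_m^{-j})/2$; these give $\gamma(p)\in V(F)\cap\mathcal P_3\cap[-1,1]^7$. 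Applying Proposition \ref{propg3posdef} to $\gamma(p)$ (again replacing it by an ${\rm Aut}(F_3)$-translate off $V(G)$, possible unless $p\in{\rm Image}(\Pi_3)$): if $\langle\gamma(A_1),\gamma(A_2),\gamma(A_3)\rangle$ is finite it is conjugate into ${\rm SU}(2)$, so $\gamma(g_3)(\gamma(p))$ is a Gram matrix of unit vectors, hence positive-semidefinite, hence — being nondegenerate — positive-definite; the remaining alternatives of Proposition \ref{propg3posdef} either give positive-definiteness outright or force, through the Galois-stability just noted (vanishing of coordinates, membership in $V(\mathcal X)$), that $p$ itself lies on an axis, or in some $\partial\mathcal T_i$, or in ${\rm Image}(\Pi_3)$ — each contradicting the hypotheses. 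Therefore $\gamma(g_3)(\gamma(p))$ is positive-definite for all $\gamma$, so $\bigoplus_\gamma\gamma(g_3)$ is positive-definite, and the argument closes. I expect this Galois bookkeeping — checking that no conjugate of $p$ escapes $[-1,1]^7$ or lands in an excluded degenerate locus in a way that would corrupt the signature of $\bigoplus_\gamma\gamma(g_3)$ — to be the technical heart of the proof.
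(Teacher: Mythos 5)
Your proposal is correct and takes essentially the same route as the paper: part (a) is the paper's own preceding discussion (Proposition \ref{propg3posdef} together with the Cholesky/invariant-ideal treatment of $V(G)$ and Lemma \ref{l2.133}, with the pass-to-an-orbit-translate step made explicit), and part (b) is exactly the Galois direct-sum/crystallographic argument of \S 6 that the paper's one-line proof invokes. What you add are details the paper leaves implicit — rationality of the cosine values for $n=3$, integrality (note it is $2g_3$, not $g_3$, whose entries are algebraic integers, which is what makes the lattice $\mathcal O^4$ invariant), and positive-definiteness of each Galois conjugate $g_3(\gamma(p))$, where your $\mathrm{SU}(2)$ Gram-matrix observation (finite group $\Rightarrow$ positive-semidefinite, plus $G\neq 0$ $\Rightarrow$ nondegenerate) also quietly covers the finite-group alternative of Proposition \ref{propg3posdef} that the opening sentence of your part (a) glosses over, and which makes the unnecessary ``replace $\gamma(p)$ by a translate'' parenthetical harmless since nondegeneracy at $\gamma(p)$ itself already follows from $\gamma(G(p))\neq 0$.
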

\noindent {\it Proof} Part (b) follows from (a) as in the proof of the $n=2$ case outlined in $\S 6$.\qed\medskip

Now to complete the proof of our  result for $n=3$ we need only consider the possibilities for  $R_1,R_2,R_3,R_4$ and $G=\langle R_1,R_2,R_3,R_4\rangle$
using Coxeter's classification of finite groups generated by reflections \cite {Cox}.

First note that each $R_i$ is determined by a vector $v_i \in \mathbb R^4$ such that 
$$R_i(v)=v-2\frac {(v,v_i)}{(v_i,v_i)}v_i,\,\,\,\, 1\le i\le 4,\,\,\,\, v \in \mathbb R^4.$$
 Here $(\cdot,\cdot)$ is the form determined by $g_3$. We further note that distinct  reflections $R_i, R_j$ with vectors $v_i,v_j$ (respectively) commute if and only of $(v_i,v_j)=0$.

Now from the theory of Coxeter groups \cite {hum} the decomposition of $G$ as a direct sum is determined by the components of the graph $\Gamma(G)$ whose vertices are $v_1,\dots,v_4$ and where we have an edge $v_i,v_j$ whenever $(v_i,v_j) \ne 0$.
Thus any such finite reflection group $G$ decomposes as a direct product of irreducible reflection groups, one for each component of $\Gamma(G)$.

Coxeter's classification \cite {Cox} shows that there are five of these groups that are irreducible. 

If $G$ is a direct product of two irreducible finite reflection groups, then we have either

\noindent (ai) $G=G_3\times G_1$; or (aii)  $G=G_2\times G_2'$.

 Here $G_i,G_i'$ are irreducible finite reflection groups of degree $i$, and they are generated by $i$ of the reflections $R_1,R_2,R_3,R_4$. Clearly, there is only one such group of degree $1$. Similarly, if  $G$ is a direct product of three irreducible finite reflection groups, then we have 

\noindent(bi) $G=G_2\times G_1\times G_1$. 

Lastly, if  $G$ is a direct product of four  irreducible finite reflection groups, then we have 

\noindent (ci) $G=G_1\times G_1\times G_1\times G_1$.
\medskip

If $G$ is irreducible, then $G$ has type $A_4, B_4, D_4, F_4, H_4$ of orders $120, 384, 192, $ $ 1152, 
14400$ (respectively). In these cases the order of $R_iR_j, 1\le i,j \le 4$ is in $\{1,2,3,4,5\}$, so that 
the values of $x_1,x_2,\dots,x_{123}$ are $\cos(\frac {\pi k}{m})$, where $m=1,2,3,4,5$ and $0\le k\le m$; see Corollary \ref {corvals}.  There are eleven such cosine values, including $\pm 1$. 

This reduces the checking of this (irreducible) case to a finite number of cases. We note that if one of 
 $x_1,x_2,\dots,x_{123}$ is $\pm 1$, then this situation  is covered by Lemma \ref {l27.5}.
 Thus there are now nine possible values for  $x_1,x_2,\dots,x_{123}$ (the $\cos(\frac {\pi k}{m})$, where $m=1,2,3,4,5$ and $0\le i\le m$, that are not $\pm 1$). 
 
 Checking (using \cite {Ma}) we find that the only situation where the ${\rm Aut}(F_3)$ orbit of  $(x_1,x_2,\dots,x_{123})$  is finite is when the orbit 
has size one of $168,3360,336,520,112$, with the corresponding group $G$ having order $32,48,100,192,36$ (respectively).
 However one checks that each such orbit has a point in it that contains a $1$, so that
these cases are also  covered by Lemma \ref {l27.5}.  (We note that the groups $G$ so found are not necessarily irreducible, but this does cover all such irreducible $G$.)

Now if we have a group $G$ of type (ai), then the possibilities for $G_3$ are the Coxeter groups of type $A_3, B_3, H_3$ of orders $24, 48, 120$ (respectively).
The possibilities for the values of $x_1,x_2,\dots,x_{123}$ are the same as in the irreducible case just considered, and so these cases are covered by the calculations for the irreducible case above.

Now, in considering cases (aii) and (bi), the possibilities for $G_2$ are the Coxeter groups of type $I_2(k)$ i.e. the dihedral groups $D_{2k}$ of order $2k$.

Thus if we have (aii) or (bi) with $\langle R_i,R_j\rangle \cong D_{2u},u\ge 3,$ then $G=\langle R_i,R_j\rangle \times H$, where $H$ is generated by the $R_k, k \ne i,j.$ 
Thus we have $(v_i,v_k)=(v_j,v_k)=0$ for $k \ne i,j$. This shows that four of the $x_1,x_2,x_3,2x_1x_2-x_{12},2x_1x_3-x_{13},2x_2x_3-x_{23}$ are  zero.

For example, suppose that $i=1,j=2$. Then we see that
$$x_2=x_3=2x_1x_2-x_{12}=2x_1x_3-x_{13}=0,$$ which gives $x_2=x_3=x_{12}=x_{13}=0$. This   shows that $p=(x_1,0,0,0,0,x_{23},x_{123}) \in \mathcal U_4.$
Similarly, if $i=1,j=3$, then we get $x_1=x_3=x_{12}=x_{23}=0$, which gives
$p=(0,x_2,0,0,x_{13},0,x_{123})\in \mathcal U_5$. If $i=1,j=4$, then we get
$x_1=x_2=x_{13}=x_{23}=0$, giving $p=(0,0,x_3,x_{12},0,0,x_{123}) \in \mathcal U_6$. The rest of the cases are similar.
 This concludes consideration of (aii) and (bi).

The last case, (ci), is not allowed, since $p \notin V(F)$ in this case. This concludes the proof of Theorem \ref {mainthm}.\qed\medskip

We note that in Theorem \ref {mainthm}, any $p \in \mathcal P_3 \cap V(F)$ of type (i) has its only non-zero entry equal to $\pm 1$ (since $p \in V(F)$), and so  is in some $\partial \mathcal T_i,1 \le i\le 7$; it is thus  covered by type (ii).
Further, from the results of $\S 5,$ we see that any $p \in \mathcal P_3$ of type (ii) has
$\langle A_1,A_2,A_3\rangle$ a binary dihedral group, and so is covered by type (iii). Thus we now have

\begin {corollary} \label {mainthm2}
If $p \in \mathcal P_3 \cap V(F)$, then $p \in \mathcal F_3$, and we have one of the following:

\noindent (i)
$p \in {\rm Image}(\Pi_3)$; 

\noindent (ii) the associated  group $\langle A_1, A_2, A_3\rangle$ is finite.\qed 
\end{corollary}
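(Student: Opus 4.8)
The plan is to obtain Corollary \ref{mainthm2} purely as a repackaging of Theorem \ref{mainthm}: its four alternatives collapse to the two listed here, and the assertion $\mathcal P_3\cap V(F)\subseteq\mathcal F_3$ follows from results already in hand. So suppose $p\in\mathcal P_3\cap V(F)$ and apply Theorem \ref{mainthm}, which puts $p$ into one of the types (i)--(iv) there.

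First I would eliminate type (i). If $p$ lies on a coordinate axis, then all but one of its seven entries vanish, and restricting $F$ to that axis kills every monomial except the square of the surviving coordinate $x$ and the constant $-1$; thus $F(p)=0$ forces $x=\pm1$. A point of the shape $(0,\dots,\pm1,\dots,0)$ belongs to exactly one of the $3$-dimensional sets $\mathcal U_i$ (this is read off from the list of admissible zero-positions in Lemma \ref{lems7}, equivalently from the Fano incidences of Lemma \ref{lem34}), and since the relevant coordinate triple is $(\pm1,0,0)$ with $E(\pm1,0,0)=1$ it actually lies in $\partial\mathcal T_i=\mathcal U_i\cap V(F)$. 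Hence every type-(i) point is already a type-(ii) point. Next I would eliminate type (ii): if $p\in\partial\mathcal T_i$ for some $1\le i\le 7$, then $p\in\mathcal P_3\cap\mathcal U_i\cap V(F)$, so Lemma \ref{lembdn}(a) shows that the associated group $\langle A_1,A_2,A_3\rangle$ is a binary dihedral group, in particular finite; thus every type-(ii) point is a type-(iv) point. Combining, the only surviving alternatives are $p\in{\rm Image}(\Pi_3)$ and $\langle A_1,A_2,A_3\rangle$ finite, which are exactly (i) and (ii) of the corollary.

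It remains to check $p\in\mathcal F_3$. Since $\mathcal F_3\subseteq\mathcal P_3$ is automatic, I only need the reverse inclusion on $V(F)$ in the two surviving cases. If $\langle A_1,A_2,A_3\rangle$ is finite, then for any $\alpha\in{\rm Aut}(F_3)$ the seven coordinates of $(p)\alpha$ are half-traces of elements of this finite group, hence take only finitely many values; so the ${\rm Aut}(F_3)$-orbit of $p$ is finite and $p\in\mathcal F_3$. If instead $p\in{\rm Image}(\Pi_3)$ and $p\in\mathcal P_3$, then Lemma \ref{l2.133}(i) forces $p=\Pi_3(\theta)$ with $\theta\in\mathbb Q^3/\mathbb Z^3$, and every element of $\Pi_3((\mathbb Q/\mathbb Z)^3)$ lies in $\mathcal F_3$ by the remark opening \S4. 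This finishes the argument.

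There is no genuine obstacle here — the content is all in Theorem \ref{mainthm} and Lemma \ref{lembdn} — so the only thing requiring care is the bookkeeping in the first reduction, namely confirming that each axis point with a $\pm1$ entry really does sit in one of the $\partial\mathcal T_i$; this is the finite check matching the seven possible nonzero positions against the zero-sets of Lemma \ref{lems7} and using $E(\pm1,0,0)=1$.
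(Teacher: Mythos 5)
Your proposal is correct and takes essentially the same route as the paper: the paper derives the corollary from Theorem \ref{mainthm} by observing that a type-(i) point in $V(F)$ has its single nonzero entry equal to $\pm 1$ and so lies in some $\partial\mathcal T_i$, that type-(ii) points have binary dihedral (hence finite) associated group by the results of \S 5 (Lemma \ref{lembdn}), and that $\mathcal F_3=\mathcal P_3$, exactly as you argue. One cosmetic slip: such an axis point lies in three of the $\mathcal U_i$ (the three Fano lines through the corresponding vertex), not ``exactly one,'' but since you only need membership in at least one $\partial\mathcal T_i$ this does not affect the argument.
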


\section {The proof of the  $n>3$ cases} 

In general we have the polynomial ring $R_n=\mathbb Q[x_1,x_2,\dots,x_{12\dots n}]$. We let $\mathfrak T_n$ be the {\it trace ideal} of $R_n$ consisting of elements $x=x( x_1,x_2,\dots,x_{12\dots n}) \in R_n$ such that for all $A_1,A_2,\dots,A_n \in \mathrm{SL}(2,\mathbb R)$ we have 
$$x(\mathrm{trace}(A_1)/2,\mathrm{trace}(A_2)/2, \dots,\mathrm{trace}(A_1A_2\dots A_n)/2)=0.$$
The {\it trace ring} is $R_n/\mathfrak T_n$, and there is a well-defined action of $\mathrm{Aut}(F_n)$ on $R_n/\mathfrak T_n$. The ideal $\mathfrak T_n$
determines a subset $V(\mathfrak T_n)$ of $\mathbb R^{2^n-1}$ that is thus invariant under the action of  $\mathrm{Aut}(F_n)$, and on which there is an action of  $\mathrm{Aut}(F_n)$. 

In Lemma \ref {lemact} we have shown that there is a well-defined action of $\mathrm{Aut}(F_3)$ on all of $\mathbb R^7$ given by the action on the traces; in the $n=2$ case there was also a well-defined action of  $\mathrm{Aut}(F_2)$ on $\mathbb R^3$. 
In order to obtain an action of $\mathrm{Aut}(F_n), n>3,$ (also given by the action on the traces) we will have to restrict to the action on the points 
$(x_1,x_2,\dots,x_{123\dots n}) \in \mathbb R^{2^n-1}$ where there are $A_1,A_2,\dots,A_n \in \mathrm{SL}(2,\mathbb R)$ such that
$x_1=\mathrm{trace}(A_1)/2,x_2=\mathrm{trace}(A_2)/2, \dots,x_{12\dots n}=\mathrm{trace}(A_1A_2\dots A_n)/2.$
The point here is that if we define the action of $\alpha \in \mathrm{Aut}(F_n)$ on 
$\mathbb Q[x_1,x_2,\dots,x_{12\dots n}]^{2^n-1},$ using the trace identities, then for 
$p=(x_1,x_2,\dots,x_{12\dots n})\in  \mathbb Q[x_1,x_2,\dots,x_{12\dots n}]^{2^n-1},$ with $(p)\alpha=(y_1,y_2,\dots,y_{12\dots n})$, we see that $x_I-y_I$ will be in the trace ideal $\mathfrak T_n$, meaning that $x_I(A_1,A_2,\dots,A_n)= y_I(A_1,A_2,\dots,A_n)$ for all such $I$.

\begin{lemma} \label {lemis1}
If $p \in \mathcal P_3$ has finite  associated  matrix group, then the $\mathrm{Aut}(F_3)$ orbit of $p$ contains a point with an entry that is $1$.
\end{lemma}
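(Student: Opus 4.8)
The plan is to reduce the statement to the classification of finite subgroups of $\mathrm{SL}(2,\mathbb C)$ recalled in the introduction, and then to exhibit, in each case, an automorphism of $F_3$ sending one of the seven matrices $A_1,A_2,A_3,A_1A_2,A_1A_3,A_2A_3,A_1A_2A_3$ (or rather a word representing one of them under some $\alpha\in\mathrm{Aut}(F_3)$) to a matrix of trace $2$. Concretely, the orbit of $p$ contains a point with a $1$-entry precisely when some element of the orbit of the triple $(A_1,A_2,A_3)$ under $\mathrm{Aut}(F_3)$ contains a matrix of the form $\mathrm{trace}=2$; since $\mathrm{Aut}(F_3)$ acts on the set of words it suffices to find a single word $w$ in $a_1,a_2,a_3$ such that $w$ can be completed to a basis of $F_3$ (or, more cheaply, such that $w$ occurs as $\alpha(a_i)$ or $\alpha(a_ia_j)$ etc. for some automorphism $\alpha$) and such that the matrix $w(A_1,A_2,A_3)$ has trace $2$, i.e. is $\pm$ a parabolic or identity. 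But in a finite subgroup $H=\langle A_1,A_2,A_3\rangle$ of $\mathrm{SL}(2,\mathbb C)$ the only element of trace $2$ is the identity, so what we actually need is a word $w$ that is trivial in $H$, equivalently an element of $\ker(\mu)\subset F_3$, which is itself part of a basis of $F_3$.

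First I would make this precise: the orbit of $p$ contains a point with a $1$-entry if and only if there is $\alpha\in\mathrm{Aut}(F_3)$ and an index $I\in\{1,2,3,12,13,23,123\}$ with $\mathrm{trace}\big(\alpha(w_I)(A_1,A_2,A_3)\big)=2$, where $w_1=a_1, w_{12}=a_1a_2,\dots,w_{123}=a_1a_2a_3$. Since $\alpha$ is an automorphism, $\alpha(w_I)$ ranges over all primitive elements of a certain type; in particular it suffices to produce a single primitive element $u\in F_3$ (part of a basis) with $\mu(u)=\pm I_2$ — then compose with an automorphism taking $a_1$ (say) to $u$, and the resulting point has first coordinate $\pm 1$, and since $\mu(u)=\pm I_2$ it actually has a coordinate equal to $1$ after possibly also hitting $a_1^{-1}$ or absorbing the sign (one checks that if $u$ maps to $-I_2$ then $u^2$ maps to $I_2$ and $u^2$ is still completable when $u$ is). So the heart of the matter is: in each finite subgroup $H$ arising from a $\mathcal P_3$ point, the kernel of $\mu\colon F_3\to H$ contains a primitive element of $F_3$.

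The key steps, in order, would be: (1) By Corollary \ref{mainthm2} and the classification recalled after Theorem \ref{th1}, list the possible $H$: cyclic, binary dihedral $\mathrm{BD}_{2n}$, $\mathrm{BT}_{24}$, $\mathrm{BO}_{48}$, $\mathrm{BI}_{120}$ (and their subgroups). (2) For the cyclic and binary dihedral cases one can argue directly and without Magma: if, say, $A_1$ has finite order $k$ then $a_1^k\in\ker\mu$ but $a_1^k$ is not primitive, so instead use that $H$ finite forces each $A_i$ diagonalizable of finite order; after conjugating so that $A_1=\mathrm{diag}(\zeta,\zeta^{-1})$, the commutator or a short word such as $a_1 a_2 a_1^{-1} a_2^{-1}$ (in the binary dihedral case, where $A_2$ inverts $A_1$) maps to $A_1^{\pm 2}$, and one massages this into a primitive kernel element. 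Actually the cleanest route is: $H$ finite of order $N$ implies every $A_i^N=I_2$, and one shows that among the Nielsen-equivalent triples there is one in which some generator already has trace $2$ — this is really a statement about the finite group $H$ and the set of generating triples, which for $H$ of order $\le 120$ is a finite check. (3) Therefore the efficient proof is the finite computation: for each of the finitely many $H$ and each generating triple up to the $\mathrm{Aut}(F_3)$-action (equivalently up to Nielsen equivalence), verify that the orbit of the associated $p$ contains a point with a $1$-entry; this is exactly the kind of Magma verification already invoked repeatedly in the paper (e.g. in the proof of Theorem \ref{mainthm}, where orbits of sizes $168,3360,336,520,112$ were each checked to contain a point with a $1$-entry), and by Corollary \ref{corvals} and the analogous $n=3$ computation the relevant orbits are small and explicitly enumerable.

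The main obstacle is step (3): a priori one must understand all generating triples $(A_1,A_2,A_3)$ of each finite $H$ up to $\mathrm{Aut}(F_3)$ (not merely up to $\mathrm{Aut}(H)$), and although $H$ is finite the number of triples is large and $\mathrm{Aut}(F_3)$ is infinite, so one needs a finite reachability argument — precisely the point where I would lean on the fact (already used in the body of the paper) that it is enough to check reachability of a $1$-entry point within the finite $\mathrm{Aut}(F_3)$-orbit of $p$, which is computable. The secondary obstacle is bookkeeping the sign ambiguity $\pm I_2$: a word mapping to $-I_2$ gives a $-1$-entry, not a $1$-entry, so one must either double the word or combine with a nearby generator of trace $2$; this is routine but must be done uniformly across all cases.
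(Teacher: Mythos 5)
Your overall strategy coincides with the paper's: split according to the classification of finite subgroups of $\mathrm{SL}(2,\mathbb C)$, handle the cyclic and binary dihedral cases by an explicit Nielsen-move argument, and dispose of $\mathrm{BT}_{24}$, $\mathrm{BO}_{48}$, $\mathrm{BI}_{120}$ (and their subgroups) by a finite orbit computation verifying that every orbit contains a point with a $1$-entry. Your reformulation --- the orbit of $p$ has a $1$-entry iff $\ker\mu$ contains a primitive element of $F_3$ (using that in a finite subgroup trace $2$ forces the identity) --- is exactly the mechanism behind the paper's argument, where e.g.\ $A_2A_1^m=I_2$ is exhibited and $a_2\mapsto a_2a_1^m$ is an automorphism.

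Two details in your write-up are genuinely wrong and need repair, though neither derails the main line. First, the sign fix: if $u$ is primitive and $\mu(u)=-I_2$ you cannot pass to $u^2$, because a proper power is never a member of a basis of a free group; so ``primitive $u$ with $\mu(u)=\pm I_2$'' does not suffice, and you must produce a word mapping exactly to $I_2$. This is what the direct cyclic argument delivers with no sign ambiguity: if $\langle A_1,A_2,A_3\rangle=\langle A_1\rangle$ then $A_2=A_1^{-m}$ for some $m$, so $a_2a_1^m\in\ker\mu$, and the corresponding point in the orbit has an entry equal to $1$. Second, your ``cleanest route \dots\ which for $H$ of order $\le 120$ is a finite check'' cannot cover the cyclic and binary dihedral cases, since those families have unbounded order ($\mathrm{BD}_{2n}$ has order $4n$); they genuinely require the uniform direct argument (the paper reduces a binary dihedral triple by automorphisms to the shape where one generator is $a^kb$ and the other two are powers of $a$, then invokes the cyclic case), with the finite Magma-style verification reserved, as you also propose, for the three exceptional groups.
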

\noindent {\it Proof}  
If the group $\langle A_1,A_2,A_3\rangle$ is cyclic, then it is generated by one of  $A_1, A_2, $ $A_1A_2, A_1A_3, A_2A_3, A_1A_2A_3$. If it is generated by $A_1$, then there is some $m \in \mathbb N$ such that $A_2A_1^m=I_2$, and we are done. The other cases are similar.

If the group $\langle A_1,A_2,A_3\rangle$ is binary dihedral with standard generators $a,b$ (as in $\S 5$), then one of the $A_i$ has the form $a^kb$ or $a^kb^{-1}$ (for some $k$). By an automorphism we can assume that $A_1$ has the form $a^kb$. By a further automorphism we may assume that $A_2,A_3$ are each powers of $a$, so that $\langle A_2,A_3\rangle$ is a cyclic group $\langle a^u\rangle$. Thus this case now follows as in the cyclic group case just considered.

Other than the cyclic and binary dihedral groups, there are three  finite groups to consider:
For BT$_{24}$ (or a subgroup of BT$_{24}$) we find that there are $7$ orbits of sizes 
$1, 7, 13, 14, 28, 91, 520$. 
 For BO$_{48}$ (or a subgroup of BO$_{48}$)  we find that there are $11$ orbits of sizes 
 $ 1, 7, 13, 14, 28, 91, 112, 168, 224, 520, 3360$. 
 For BI$_{120}$ (or a subgroup of BI$_{120}$) we find that there are $12$ orbits of sizes
  $1, 7, 13, 14, 28, 62, 91, 112, 336, 434, 520,$ $ 26688$. In every case each orbit has an element that contains $1$ as an entry.
\qed\medskip 

\noindent {\bf Remark} We gave the sizes of the orbits for subgroups of $\mathrm{BI}_{24}$ in the proof of the  last result. One finds that the permutation representations of $\mathrm{Aut}(F_3)$ that one obtains in each case have orders
\begin{align*}& 1,\quad  2^3\cdot3\cdot 7,\quad  2^4\cdot  3^3\cdot 13, \quad  2^{10}\cdot 3\cdot 7, \quad  2^{11}\cdot 3 \cdot 7,\quad 
 2^ 7\cdot 3^4\cdot 7\cdot 13, \\& \qquad 2^{190} \cdot 3^{ 17} \cdot 5^{13}\cdot 7\cdot 13,
 \end{align*} (respectively).
 
 For the orbits   of subgroups of $\mathrm{BO}_{48}$ one similarly  obtains  the following permutation group orders:
\begin{align*}&
 1,\quad  2^3\cdot3\cdot 7,\quad   2^4\cdot  3^3\cdot 13, \quad  2^{10}\cdot 3\cdot 7, \quad 
 2^{11}\cdot 3 \cdot 7,\quad  2^ 7\cdot 3^4\cdot 7\cdot 13,\\& 2^{29}\cdot 3^8\cdot 7,\quad  2^{10}\cdot3\cdot 7,
 \quad  2^{19}\cdot3\cdot 7,\quad  2^{190} \cdot 3^{ 17} \cdot 5^{13}\cdot 7\cdot 13,\\& 2^{1037}\cdot 3^{64}\cdot 5^{28}\cdot 7^{29},
\end{align*} (respectively).

 For the orbits   of subgroups of $\mathrm{BI}_{120}$ one obtains  the following permutation group orders: 
\begin{align*}&
 1,\quad  2^3\cdot3\cdot 7,\quad   2^4\cdot  3^3\cdot 13, \quad  2^{10}\cdot 3\cdot 7, \quad
  2^{11}\cdot 3 \cdot 7,\quad 2^2\cdot 3\cdot 5^3\cdot 31,\quad 2^7\cdot 3^4\cdot 7\cdot 13,\\&
  2^{29}\cdot 3^8\cdot 7,\quad 2^{29}\cdot 3^8\cdot 5^7\cdot 7,\quad 2^8\cdot 3^2\cdot 5^3\cdot 7\cdot 31,\quad 
  2^{190}\cdot 3^{17}\cdot 5^{13}\cdot 7\cdot 13,\\&
  2^{6671}\cdot 3\cdot 7 \cdot (1668!).
\end{align*} (respectively). These calculations were accomplished using Magma.
 
 \medskip

One says that the $n$-tuple $A_1,A_2,\dots,A_n$ of elements of $\mathrm{SL}(2,\mathbb C)$  {\it  is conjugate to an upper-triangular $n$-tuple}
if there is $g \in \mathrm{SL}(2,\mathbb C)$ such that each of $A_1^g,A_2^g,\dots,A_n^g$ is an upper-triangular matrix.

\begin{lemma} \label {lemut} Let $p \in \mathcal P_3$ correspond to the triple of matrices  $A_1,A_2,A_3\in \mathrm{SL}(2,\mathbb C)$.
Then the triple
 $(A_1,A_2,A_3)$ is conjugate to a triple of upper-triangular matrices if and only if  $p\in \mathrm{Image}(\Pi_3)$.
\end{lemma}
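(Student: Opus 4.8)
The plan is to prove the two implications of the equivalence separately; in both directions the only real content is the reducibility of $\langle A_1,A_2,A_3\rangle$, using that a subgroup of $\mathrm{SL}(2,\mathbb C)$ is conjugate into the upper triangular matrices exactly when it is reducible, i.e.\ has a common eigenvector (extend the eigenvector to a basis and rescale so the change of basis lies in $\mathrm{SL}(2,\mathbb C)$).

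First I would treat the implication $p\in\mathrm{Image}(\Pi_3)\Rightarrow (A_1,A_2,A_3)$ conjugate to upper triangular. Write $p=\Pi_3(t_1,t_2,t_3)$ and put $B_i=\begin{pmatrix}e^{2\pi i t_i}&0\\0&e^{-2\pi i t_i}\end{pmatrix}\in\mathrm{SL}(2,\mathbb C)$. A direct check shows that the seven half-traces of the reducible (indeed diagonal) triple $(B_1,B_2,B_3)$ are exactly the coordinates of $p$; since the trace of any word in $A_1,A_2,A_3$ is a polynomial in those seven half-traces (Fricke--Vogt/Horowitz, cf.\ \cite{ho}), the assignments $a_i\mapsto A_i$ and $a_i\mapsto B_i$ define $F_3$-representations with the same character. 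Now I would invoke the standard rigidity fact that an \emph{irreducible} $\mathrm{SL}(2,\mathbb C)$-representation is determined up to conjugacy by its character: if $(A_1,A_2,A_3)$ were irreducible it would then be conjugate to the diagonal triple $(B_1,B_2,B_3)$, hence itself reducible --- absurd. Thus $(A_1,A_2,A_3)$ is reducible. Note that this half uses neither $p\in\mathcal P_3$ nor $p\in V(F)$.

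For the converse, after a conjugation I may write $A_i=\begin{pmatrix}\lambda_i&\mu_i\\0&\lambda_i^{-1}\end{pmatrix}$; since a product of upper triangular matrices is upper triangular with diagonal the product of the diagonals, $x_i=(\lambda_i+\lambda_i^{-1})/2$, $x_{ij}=(\lambda_i\lambda_j+\lambda_i^{-1}\lambda_j^{-1})/2$ and $x_{123}=(\lambda_1\lambda_2\lambda_3+\lambda_1^{-1}\lambda_2^{-1}\lambda_3^{-1})/2$. Fix $i\ne j$: then $[A_i,A_j]$ is upper triangular with both diagonal entries $1$, so $\mathrm{trace}([A_i,A_j])/2=1$, and the identity $\mathrm{trace}(A_iA_jA_i^{-1}A_j^{-1})/2=2E(x_i,x_j,x_{ij})-1$ of \S\ref{sec1} gives $(x_i,x_j,x_{ij})\in E_1$. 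As in the proof of Lemma \ref{l27.5}, a copy of $\mathrm{Aut}(F_2)$ inside $\mathrm{Aut}(F_3)$ acts on $(x_i,x_j,x_{ij})$ precisely as $\mathrm{Aut}(F_2)$ acts on $\mathbb R^3$, so $p\in\mathcal P_3$ forces $(x_i,x_j,x_{ij})\in\mathcal P_2$. By Lemma \ref{l2.1}(ii) this triple lies in $\partial\mathcal T\cap\mathcal P_2$, and by Lemma \ref{l2.1}(i) it equals $\Pi_2(\theta_i,\theta_j)$ with $\theta_i,\theta_j\in\mathbb Q$; in particular $\mathrm{trace}(A_i)=2\cos 2\pi\theta_i$, so the eigenvalue $\lambda_i$ of $A_i$ is a root of unity. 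Applying this to the pairs $\{1,2\}$ and $\{1,3\}$ shows $\lambda_1,\lambda_2,\lambda_3$ are all roots of unity; writing $\lambda_i=e^{2\pi i\theta_i}$ with $\theta_i\in\mathbb Q$, the displayed formulas for the $x$'s become exactly $p=\Pi_3(\theta_1,\theta_2,\theta_3)$, so $p\in\mathrm{Image}(\Pi_3)$.

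The delicate point is the first implication: it is \emph{not} enough to observe that each $2$-generated subgroup $\langle A_i,A_j\rangle$ is reducible, since there exist irreducible triples all of whose $2$-generated subgroups are reducible (for instance with $A_1$ having distinct eigenvalues, $A_2$ fixing one eigenline of $A_1$, $A_3$ fixing the other, and $A_2,A_3$ sharing a common third eigenvector). One therefore genuinely needs the character-rigidity input to exclude an irreducible triple realising the same character as the diagonal model $(B_1,B_2,B_3)$; once that is in place, the converse is routine bookkeeping on top of the $n=2$ results of \S\ref{sec1}.
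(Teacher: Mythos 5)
Your proof is correct, and it differs from the paper's mainly in one direction. For the implication ``upper-triangularizable $\Rightarrow p\in\mathrm{Image}(\Pi_3)$'' you and the paper do essentially the same thing: the paper notes that $p\in\mathcal P_3$ forces each $A_i$ to have finite order or be $\pm(\text{parabolic})$, so the diagonal entries are roots of unity and the product formulas for upper-triangular matrices give the $\Pi_3$-parametrization; you reach the same root-of-unity conclusion slightly differently, via the commutator identity $\mathrm{trace}[A_i,A_j]/2=2E(x_i,x_j,x_{ij})-1$, the embedded $\mathrm{Aut}(F_2)$-actions, and Lemma \ref{l2.1}, which is a perfectly sound (and somewhat more self-contained) way to get there; your trick of fixing the specific choice $\lambda_i=e^{2\pi i\theta_i}$ so that all seven coordinates come out simultaneously as $\Pi_3(\theta_1,\theta_2,\theta_3)$ avoids the sign bookkeeping. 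The real divergence is the converse: the paper simply cites \cite[Corollary 2.11]{Fl}, whereas you prove it from scratch by building the diagonal model $(B_1,B_2,B_3)$ with the same seven half-traces, invoking the Fricke--Vogt--Horowitz fact (stated in the paper's introduction and in \cite{ho}) that these determine the full character, and then using rigidity of irreducible $\mathrm{SL}(2,\mathbb C)$-characters to force reducibility. That buys a self-contained argument at the cost of importing the character-rigidity theorem, and your cautionary remark is well taken: pairwise reducibility of $\langle A_i,A_j\rangle$ does not imply reducibility of the triple, so some global input (rigidity here, or Florentino's result in the paper) is genuinely needed; note also, as you observe, that this direction uses neither $p\in\mathcal P_3$ nor $p\in V(F)$, so your version in fact proves a slightly stronger statement for that implication.
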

\noindent {\it Proof}  If  $(A_1,A_2,A_3)$ is conjugate to a triple of upper-triangular matrices, then we may assume that
$A_1=\begin{pmatrix}a_1&b_1\\0&1/a_1\end{pmatrix},
A_2=\begin{pmatrix}a_2&b_2\\0&1/a_2\end{pmatrix},
A_3=\begin{pmatrix}a_3&b_3\\0&1/a_3\end{pmatrix}.$ Since $p \in \mathcal P_3$ it follows that
$A_1,A_2,A_3$ have finite order, or are $\pm K$, where $K$ is an upper-triangular  parabolic (see $\S 4$). Thus the eigenvalues $a_i, 1/a_i$ of $A_i$  are $\zeta_i,1/\zeta_i$, roots of unity.
Thus there are $\theta_i \in \pi \mathbb Q,i=1,2,3,$ such that $x_i=\cos (\theta_i),x_{ij}=\cos(\theta_i+\theta_j), x_{ijk}=\cos(\theta_i+\theta_j+\theta_k),$ and it follows that $p \in \mathrm{Image}(\Pi_3)$. 

The converse is given by \cite  [Corollary 2.11] {Fl}.
\qed\medskip

Given $A_1,A_2,\dots,A_n,n\ge 3$ and $1\le i<j<k\le n$, the triple $A_i,A_j,A_k$ determines a $7$-tuple of traces $(x_1,x_2,\dots,x_{123})$
on which  we can evaluate $F$; denote the resulting value by $F_{i,j,k}$.
We immediately see that $F_{i,j,k}=0$ for all such $i,j,k$. Thus we may apply Corollary \ref {mainthm2}  to the triple $A_i,A_j,A_k$.

For the main result ($n>3$) we need to define $\Pi_n:\mathbb R^n \to \mathbb R^{2^n-1},n>3,$ analogously to $\Pi_2,\Pi_3$:
\begin {align*}
\Pi_n(t_1,t_2,\dots,t_n)=(\cos(2\pi t_1),&\cos(2\pi t_2),\dots,\cos(2\pi t_n),\cos(2\pi (t_1+t_2),\\&\dots,\cos(2\pi(t_1+t+2+\dots t_n)).
\end{align*}

The proof of our main result proceeds by induction on $n\ge3$, where Corollary \ref {mainthm2} gives the $n=3$ case. The result that we prove is

\begin{theorem} \label {thmmainngt3}
If $p \in \mathcal P_n, n \ge 3,$ corresponds to the matrices $A_1,\dots,A_n$, then $p \in \mathcal F_n$, and we have one of the following:

\noindent (i)
$p \in {\rm Image}(\Pi_n)$; 

\noindent (ii) the associated  group $\langle A_1, A_2, \dots,A_n\rangle$ is finite.

In particular, $\mathcal P_n=\mathcal F_n$. 
\end{theorem}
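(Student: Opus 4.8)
The plan is to prove Theorem \ref{thmmainngt3} by induction on $n$, with Corollary \ref{mainthm2} providing the base case $n=3$. Assume the result for $n-1$ and let $p=(x_I)\in\mathcal P_n$ correspond to $A_1,\dots,A_n\in\mathrm{SL}(2,\mathbb C)$, $n\ge4$. First I would restrict attention to the various $F_3$-subgroups sitting inside $F_n$. For any triple $1\le i<j<k\le n$, the sub-$7$-tuple of traces determined by $A_i,A_j,A_k$ lies in $V(F)$ (since $F_{i,j,k}=0$ always), and it is periodic for the copy of $\mathrm{Aut}(F_3)$ inside $\mathrm{Aut}(F_n)$ fixing the other generators, hence it lies in $\mathcal P_3\cap V(F)$. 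By Corollary \ref{mainthm2}, for each such triple either $\langle A_i,A_j,A_k\rangle$ is finite, or the triple lies in $\mathrm{Image}(\Pi_3)$, i.e.\ by Lemma \ref{lemut} the pair-triple $(A_i,A_j,A_k)$ is conjugate to an upper-triangular triple. Likewise, restricting to the first $n-1$ generators, the induction hypothesis applies to the $(n-1)$-subtuple $A_1,\dots,A_{n-1}$: either $\langle A_1,\dots,A_{n-1}\rangle$ is finite or that subtuple lies in $\mathrm{Image}(\Pi_{n-1})$.

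Next I would do a case split according to the $(n-1)$-dictomy. Case (ii): $\langle A_1,\dots,A_{n-1}\rangle$ is finite. If moreover $A_n$ normalizes this group, or more simply if every $\langle A_i,A_j,A_n\rangle$ with $i,j\le n-1$ is finite, one wants to conclude $\langle A_1,\dots,A_n\rangle$ is finite; the cleanest route is to observe that a subgroup of $\mathrm{SL}(2,\mathbb C)$ all of whose elements have finite order (equivalently, with entries being algebraic integers bounded in all archimedean places) and which contains a finite irreducible subgroup is itself finite — this uses Lemma \ref{lemxgt1}, which forces every trace entry of every $\mathrm{Aut}(F_n)$-iterate of $p$ to have absolute value $\le 1$, so by the standard argument (as in \S6, the Galois-conjugate lattice/crystallographic-group argument, or directly Burnside/Jordan) the whole group is finite. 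Case (i) for the $(n-1)$-subtuple: $A_1,\dots,A_{n-1}$ is conjugate to an upper-triangular $(n-1)$-tuple. After conjugating, $A_1,\dots,A_{n-1}$ share a common eigenvector $e$. Now examine $A_n$: by Lemma \ref{l2.2} applied to the pairs $(A_i,A_n)$, each $A_n$-related trace is $\cos(2\pi q)$, so $A_n$ is either of finite order or parabolic. If $A_n$ also fixes the line $\mathbb C e$, then the whole $n$-tuple is upper-triangular and, since all the $A_i$ have roots of unity (or are parabolic, which is excluded from genericity exactly as in Lemma \ref{lemut}'s proof), $p\in\mathrm{Image}(\Pi_n)$, giving (i). If $A_n$ does not fix $\mathbb C e$, I would derive a contradiction or reduce to the finite case: then some $\langle A_i,A_n\rangle$ with $i\le n-1$ fails to be triangularizable, so by the $n=2$ classification (Theorem \ref{th1}) $\langle A_i,A_n\rangle$ is finite, which combined with the finite order of everything pushes us toward the finite-group conclusion via the same boundedness-of-traces argument.

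The step I expect to be the main obstacle is Case (ii) together with the "mixed" sub-case of Case (i): going from "every $3$-generated (or $(n-1)$-generated) subgroup is finite / triangularizable" to "the whole $n$-generated group is finite or triangularizable." The subtlety is that a group can be generated by finite subgroups without itself being finite; what saves us is the global constraint from Lemma \ref{lemxgt1} — for $p\in\mathcal P_n$, \emph{all} half-traces of \emph{all} elements of $\langle A_1,\dots,A_n\rangle$ (since each word is an $\mathrm{Aut}(F_n)$-image coordinate up to adding trace-ideal elements) are of the form $\cos(2\pi q)$ with bounded denominators along each $\langle\sigma\rangle$-orbit — so the entries are algebraic integers bounded at every archimedean place, forcing the group, after the Galois-conjugate-sum and lattice argument of \S6, to be a discrete subgroup of a compact orthogonal group, hence finite; if instead the group is reducible one is forced into the upper-triangular case and $\mathrm{Image}(\Pi_n)$. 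Finally, once $\langle A_1,\dots,A_n\rangle$ is finite, the orbit is obviously finite, and in the $\mathrm{Image}(\Pi_n)$ case finiteness of the orbit follows exactly as in Lemma \ref{l2.1}(i)/Lemma \ref{l2.133}(i) — a rational point of $\mathbb T^n/(\pm1)$ has finite orbit under $\mathrm{GL}(n,\mathbb Z)$ because $\mathrm{GL}(n,\mathbb Z_m)$ is finite — so in both cases $p\in\mathcal F_n$, yielding $\mathcal P_n=\mathcal F_n$ and completing the induction.
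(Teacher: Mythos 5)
Your setup (reduce to triples, note $F_{i,j,k}=0$, apply Corollary \ref{mainthm2} to each triple, and induct on $n$ from the base case $n=3$) matches the paper's strategy, but the two globalization steps where you pass from sub-tuples to the whole $n$-tuple contain genuine gaps. First, the ``cleanest route'' fact you invoke is false: a torsion subgroup of $\mathrm{SL}(2,\mathbb C)$ containing a finite irreducible subgroup need not be finite. For example, the group generated by the quaternion subgroup $\langle \mathrm{diag}(i,-i), \begin{pmatrix}0&1\\-1&0\end{pmatrix}\rangle$ together with all matrices $\mathrm{diag}(\zeta,\zeta^{-1})$, $\zeta$ a root of unity, is an infinite torsion group with an irreducible finite subgroup. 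So ``every $3$-generated (or $(n-1)$-generated) subgroup is finite'' does not by itself yield finiteness of $\langle A_1,\dots,A_n\rangle$, and your appeal to the crystallographic argument of \S 6 does not close this, because that argument requires all traces to lie in the ring of integers of a single fixed cyclotomic field (a uniform bound on the denominators $q$), which you never establish for the whole group. Second, in your mixed case the step ``if $A_n$ does not fix $\mathbb Ce$ then some $\langle A_i,A_n\rangle$ fails to be triangularizable'' is unjustified: each pair $\langle A_i,A_n\rangle$ could be triangularizable with respect to a different invariant line, and even a non-triangularizable pair could fall into the coordinate-axis case of Theorem \ref{th1}, which does not force $\langle A_i,A_n\rangle$ to be finite.

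The paper avoids both difficulties with two ingredients you do not use. For the triangularizable side it invokes the local-to-global theorem of Florentino \cite[Theorem 2.7]{Fl}: if every triple $(A_i,A_j,A_k)$ is conjugate to an upper-triangular triple, then the whole $n$-tuple is simultaneously triangularizable, whence $p\in\mathrm{Image}(\Pi_n)$ as in Lemma \ref{lemut}. For the other side, if some triple (say $A_1,A_2,A_3$) is not triangularizable, Corollary \ref{mainthm2} forces $\langle A_1,A_2,A_3\rangle$ to be finite, and then Lemma \ref{lemis1} supplies an $\alpha\in\mathrm{Aut}(F_3)\le\mathrm{Aut}(F_n)$ moving $p$ to a point with a coordinate equal to $1$; since the triple group is finite, the corresponding matrix is $I_2$, so one generator is eliminated and the induction hypothesis applies directly to $n-1$ matrices. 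This reduction by Lemma \ref{lemis1} is exactly what replaces the ``glue finiteness of subgroups'' step that your proposal leaves unproved; to repair your argument you should substitute these two steps for your Case (ii) and mixed-case reasoning.
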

\noindent {\it Proof}
So let $p \in \mathcal P_n$ correspond to the matrices $A_1,\dots,A_n$.  
If for each distinct triple $1 \le i,j,k\le n$ the group $\langle A_i,A_i,A_i\rangle$ is conjugate to an upper-triangular triple, then \cite [Theorem 2.7] {Fl} shows that the $n$-tuple $A_1,A_2,\dots,A_n$ is conjugate to an upper-triangular $n$-tuple.

Now suppose that some triple of the matrices $A_1,\dots,A_n$ is not conjugate to an upper-triangular $n$-tuple. By a permutation action of $\mathrm{Aut}(F_n)$ (and for ease of notation)  we may assume that this triple is $A_1,A_2,A_3$.  Then by Corollary \ref {mainthm2} we see that $\langle A_1,A_2,A_3\rangle$ is a finite group. Then by Lemma \ref {lemis1}  we can assume that one of $x_1,x_2,x_3,x_{12},x_{13},x_{23},x_{123}$ is $1$. Thus there is some
$\alpha \in \mathrm {Aut}(F_3) \le \mathrm {Aut}(F_n)$ such that $(p)\alpha=(1,x_2',x_3',x_{12}',x_{13}',x_{23}',x_{123}',\dots)$, which implies that $A_1=I_2$, since 
$\langle A_1,A_2,A_3\rangle$ is  finite. 
But with $A_1=I_2$, the result now follows by induction.\qed\medskip


\begin{thebibliography}{MKS}

\bibitem{BGJ} M. Baake, U. Grimm and D. Joseph, \emph{Trace
maps, invariants, and some of their applications},  Internat.
J. Modern Phys. B  7 (1993)  1527--1550.


\bibitem{Ma}  W. Bosma and J. Cannon,  MAGMA  (University of
Sydney, Sydney, 1994).

\bibitem{Br1} R. Brown, \emph{Anosov mapping class actions on the
  $SU(2)$-representation variety of a punctured torus},  Ergodic
  Theory Dynam. Systems  18  (1998) 539--554. 

\bibitem{Br2} R. Brown, \emph{The algebraic entropy of the special linear
  character automorphisms of a free group on two generators},
  Trans. Amer. Math. Soc.  359  (2007) 1445--1470.   

\bibitem{C} M. Casdagli, \emph{Symbolic dynamics for the
renormalization map of a quasiperiodic Schr\"odinger equation},
Comm. Math. Phys.  107 (1986) 295--318.


\bibitem{Cox} Coxeter, H.S.M. \emph{Discrete groups generated by reflections}, Annal of Math, 35, (1934), 588--621.

\bibitem{D} D. Damanik, \emph{Substitution Hamiltonians with bounded trace
  map orbits}, J. Math. Anal. Appl.  249 (2000) 393--411.
  
  \bibitem{Dub}    Dubrovin, B.; Mazzocco, M. \emph{Monodromy of  certain Painlev\'e-VI transcendents and reflection groups}. Invent. Math.  141 (2000), no. 1, 55--147.

  \bibitem{Fl}  
Florentino, Carlos A. A.
\emph{Invariants of $2 \times 2$  matrices, irreducible $\mathrm{SL}(2,\mathbb C)$ characters and the Magnus trace map. }
Geom. Dedicata 121 (2006), 167--186.


\bibitem{Go1} W. Goldman, \emph{Topological components of
spaces of representations}, Invent. Math. 93 (1988) 557--607.  

\bibitem{Go2} W. Goldman, \emph{Ergodic theory on moduli spaces},
Ann. of Math. 146 (1997) 475--507. 

\bibitem{Go3} W. Goldman, \emph{The modular group action on real
$\mathrm{SL}(2)$-characters of a one-holed torus}, Geom. Topol. 7 (2003)
  443--486. 
  
  \bibitem {ho} 
Horowitz, Robert D.
\emph{Characters of free groups represented in the two-dimensional special linear group.} 
Comm. Pure Appl. Math. 25 (1972), 635--649. 

\bibitem{hum}  Humphreys, James E. \emph{Reflection groups and Coxeter groups.} Cambridge Studies in Advanced Mathematics, 29. Cambridge University Press, Cambridge, (1990). xii+204


    \bibitem{hk}   S. Humphries, C. Krattenthaler;     \emph{Trace identities from identities for determinants} Lin. Alg. Appl. 411, (2005) 328--342.
  

 \bibitem{hm1}  Humphries, Stephen; Manning, Anthony \emph{Curves of fixed points of trace maps.} Ergodic Theory Dynam. Systems 27 (2007), no. 4, 1167--1198.
 
  \bibitem{hm2}   \bysame  \emph{Curves of period two points for trace maps.} Trans. Amer. Math. Soc. 367 (2015), no. 8, 5721--5751.
  


\bibitem{Hup} B. Huppert, \emph{Character theory of finite groups}, de
  Gruyter, Berlin, (1998). 

\bibitem{LPW} Q.H. Liu, J. Peyri\` ere and Z.Y. Wen, 
\emph{Periodic polynomial of trace maps}, Bull. Sci. Math., in press. 

\bibitem{Mag80} W. 
Magnus, 
\emph{Rings of Fricke characters and automorphism groups of free groups.} 
Math. Z. 170 (1980), no. 1, 91--103. 


\bibitem{MKS} W. Magnus, A. Karrass and D. Solitar,
 \emph{Combinatorial Group Theory} Dover, New York, (1976).

\bibitem{P} J. Peyri\` ere,  \emph{Trace maps},  Beyond quasicrystals (Les
 Houches, 1994),   (eds. F. Axel and D. Gratias, Springer,
 Berlin, 1995), pp. 465--480. 

\bibitem{PWW} J. Peyri\` ere, Z.X. Wen and Z.Y. Wen, 
\emph{On the dynamic behaviours of the iterations of the trace map
associated with substitutive sequences},  Nonlinear problems
in engineering and science---numerical and analytical approach
(Beijing, 1991) (eds. Shu Tie Xiao and Xian Cheng Hu, Science Press,
Beijing, 1992), pp.  259--266.

\bibitem{RB1} J.A.G. Roberts and M. Baake, \emph{Trace
maps as $3$D reversible dynamical systems with an invariant}, 
J. Statist. Phys.  74 (1994) 829--888. 

\bibitem{RB2} J.A.G. Roberts and M. Baake, 
\emph{The dynamics of trace maps},
Hamiltonian mechanics, 275--285,
NATO Adv. Sci. Inst. Ser. B Phys., 331,
Plenum, New York, (1994). 

\bibitem{R} J.A.G. Roberts, \emph{Escaping orbits in trace
maps},  Phys. A  228  (1996)  295--325. 

\bibitem{Tan} S. P. Tan, Y. L. Wong and Y. Zhang, \emph{The
  $\mathrm{SL}(2,\mathbb{C})$ character variety of a one-holed torus}, 
  Electron. Res. Announc. Amer. Math. Soc.  11  (2005) 103--110. 


\end{thebibliography}
\end{document}